\newtheorem{theorem}{Theorem}
\newtheorem{lemma}[theorem]{Lemma}
\newtheorem{corollary}[theorem]{Corollary}
\newtheorem{remark}[theorem]{Remark}
\newtheorem{claim}[theorem]{Claim}
\newtheorem{proposition}[theorem]{Proposition}
\numberwithin{theorem}{section}
\begin{document}
 \date{}

\title{Codegree thresholds for covering $3$-uniform hypergraphs}

\author{Victor Falgas-Ravry \\
\small Department of Mathematics\\ [-0.8ex]
\small Vanderbilt University, Nashville\\ [-0.8ex]
\small Tennessee, U.S.A.\\
\small \tt victor.falgas-ravry@vanderbilt.edu\\
\and
Yi Zhao \thanks{Research partially supported by NSF Grant DMS-1400073.}\\
\small Department of Mathematics\\[-0.8ex]
\small Georgia State University, Atlanta\\[-0.8ex]
\small Georgia, U.S.A.\\
\small \tt yzhao6@gsu.edu
}

\maketitle
\begin{abstract}  
Given two $3$-uniform hypergraphs $F$ and $G$, we say that $G$ has an \emph{$F$-covering} if we can cover $V(G)$ by copies of $F$. The \emph{minimum codegree} of $G$ is the largest integer $d$ such that every pair of vertices from $V(G)$ is contained in at least $d$ triples from $E(G)$. Define $c_2(n,F)$ to be the largest minimum codegree among all $n$-vertex 3-graphs $G$ that contain no $F$-covering.
This is a natural problem intermediate (but distinct) from the well-studied Tur\'an problems and tiling problems.
In this paper, we determine $c_2(n, K_4)$ (for $n>98$) and the associated extremal configurations (for $n>998$), where $K_4$ denotes the complete $3$-graph on $4$ vertices. We also obtain bounds on $c_2(n,F)$ which are apart by at most $2$ in the cases where $F$ is $K_4^ -$ ($K_4$ with one edge removed), $K_5^-$, and the tight cycle $C_5$ on $5$ vertices. 
\end{abstract}

\section{Introduction}
\subsection{Notation}
Given a set $A$ and a positive integer $k$, we write $A^{(k)}$ for the collection of $k$-element subsets of $A$. We use $[n]$ as a shorthand for the collection of the first $n$ natural numbers, $[n]=\{1,2, \ldots, n\}$. We shall often consider pairs or triples of vertices; when there is no risk of confusion, we write $ab$ and $abc$ as a shorthand for $\{a,b\}$ and $\{a,b,c\}$ respectively.
A \emph{$k$-uniform hypergraph}, or \emph{$k$-graph}, is a pair $G=(V, E)$, where $V$ is a set of \emph{vertices}, and $E \subseteq V^{(k)}$ is a collection of $k$-subsets of $V$, which form the \emph{edges} of $G$. 
A \emph{subgraph}  of $G$ is a $k$-graph $H$ with $V(H) \subseteq V(G)$ and $E(H) \subseteq E(G)$. The \emph{degree} of a vertex $x\in V(G)$, which we denote by $d(x)$, is the number of edges of $G$ containing $x$. The \emph{minimum degree} $\delta_1(G)$ of $G$ is the minimum of $d(x)$ over all vertices $x\in V(G)$. 

In this paper, we will focus on 3-graphs $G=(V,E)$ and another degree-like quantity, and its minimum: the \emph{codegree} of a pair $xy\in V^{(2)}$, denoted by $d(x,y)$, is the number of edges of $G$ containing the pair $xy$. We write $\Gamma(y,z)$ for the \emph{neighbourhood} of the pair $xy$, i.e. the set of $z\in V\setminus\{x,y\}$ such that $xyz\in E(G)$. The \emph{minimum codegree} of $G$ is $\delta_2(G)=\min_{xy\in V^{(2)}}d(x,y)$. The \emph{link graph} of a vertex $x \in V(G)$ is the collection $G_x$ of all pairs $uv$ such that $xuv\in E(G)$. The \emph{degree} of $u$ in $G_x$ is the number of vertices $v$ such that $uv\in G_x$; note this is exactly the codegree of $x$ and $u$. Finally we define the \emph{edit distance} between two $3$-graphs $G$ and $G'$ on the same vertex set to be the minimum number of changes required to make $G$ isomorphic to $G'$, where a change consists in replacing an edge by a non-edge and vice-versa. 

\subsection{The problem}
Let $F$ be a fixed $3$-graph on $t$ vertices with at least one $3$-edge. A $3$-graph $G$ has an \emph{$F$-covering} if we can cover $V(G)$ with \emph{$F$-subgraphs} (subgraphs that are isomorphic to $F$). For $n \geq t$ and $i=1, 2$, we define 
\[c_i(n, F) = \max\{\delta_i(G): \  \vert V(G) \vert = n \textrm{ and }G \textrm{ does not have an $F$-covering}\}.\]
and call $c_1(n, F)$ the \emph{covering degree-threshold} and $c_2(n, F)$ the \emph{covering codegree-threshold} of $F$.

The covering threshold $c_i(n, F)$ was introduced by Han, Zang and Zhao~\cite{HanZangZhao15} when they studied the minimum degree that guarantees the existence of a $K$-tiling, where $K$ is a complete $3$-partite $3$-graph. It was shown implicitly in~\cite{HanZangZhao15} that $c_1(n,K)=(6-4\sqrt{2}+o(1))\binom{n}{2}$ if $K$ has at least two vertices in each part (in contrast, it is easy to see that $c_1(n,K)= o(n^2)$ if some part of $K$ has only one vertex).
It was also noted that $c_1(n, F)= ( 1 - 1/ (\chi(F) - 1) + o(1) )n$ for all \emph{graphs} $F$, where $\chi(F)$ is the chromatic number of $F$.

Our objective in this paper is to study the behaviour of the function $c_2(n, F)$ for various $3$-graphs $F$. In other words, we seek to determine what codegree condition is necessary to guarantee that \emph{all} vertices in a $3$-graph $G$ are contained in copies of $F$. When determining the exact value of $c_2(F,n)$ is difficult, we may ask instead for its asymptotic behaviour. It can be shown (see Section~\ref{section: preliminaries}) that the limit 
\[c_2(F) = \lim_{n\rightarrow \infty}\frac{c_2(n,F)}{n-2}\]
exists.\footnote{This is a direct corollary of the proof of Proposition~6 from~\cite{FalgasRavryMarchantPikhurkoVaughan15} on the existence of conditional codegree density (with an uncovered vertex $x$ used as the conditional subgraph $H$), or can be proved in the same way as the existence of the usual codegree density $\gamma(F)$ in~\cite{MubayiZhao07}.}  We call $c_2(F)$ the \emph{covering codegree-density} of $F$.

Let us introduce the $3$-graphs relevant to the present work. 
Let $K_t=([t], [t]^{(3)})$ denote the complete $3$-graph on $t$ vertices, and let $K_t^-$ denote the $3$-graph obtained from $K_t$ by removing one $3$-edge. The \emph{strong} or \emph{tight} $t$-cycle is the $3$-graph $C_t$ on $[t]$ with $3$-edges $\{123, 234, 345, \ldots, (t-2)(t-1)t, (t-1)t1, t12\}$.  We denote by $F_{3,2}$ the $3$-graph $([5],\{123,124,125,345\})$. Finally a \emph{Steiner Triple System} (STS) is a $3$-graph in which every pair of vertices is contained in exactly one $3$-edge; it is a 168 years old result of Kirkman~\cite{Kirkman1847} that a STS on $t$ vertices exists if and only if $t\equiv 1,3 \mod 6$. The \emph{Fano plane} is the unique (up to isomorphism) STS on $7$ vertices, which we denote by $\mathrm{Fano}$. 

\subsection{Motivation and related work in extremal hypergraph theory}\label{subsection: literature}

Before we state our results, let us give some motivation and background for our problem.  Let $F$ be a fixed $3$-graph on $t$ vertices with at least one $3$-edge. A $3$-graph $G$ is \emph{$F$-free} if it does not contain a copy of $F$ as a subgraph. Further $G$ has an \emph{$F$-tiling}, or \emph{$F$-factor}, if we can cover $V(G)$ with \emph{vertex-disjoint} $F$-subgraphs. There has been much research into the degree and/or codegree conditions needed ensure the existence of an $F$-subgraph or of an $F$-factor in a $3$-graph $G$. Determining the degree/codegree condition necessary to guarantee an $F$-covering is intermediate between these two well-studied problems. As we show in the next subsection, the existence, covering, and tiling problems give rise to different thresholds in their codegree versions, so that our work is novel. It is hoped that studying the properties of the covering codegree threshold function $c_2(n,F)$ --- such as \emph{supersaturation}, discussed in Section~\ref{section: conclusion}, which could be useful for applying semi-random methods to tiling problems --- will lead to insights about both the existence and tiling problems.


The \emph{Tur\'an number} $\mathrm{ex}(n, F)$ of $F$ is the maximum number of $3$-edges an $F$-free $3$-graph on $n$ vertices can have. 
The \emph{codegree threshold} $\mathrm{ex}_2(n, F)$ of $F$ is the maximum of $\delta_2(G)$ over all $F$-free $3$-graphs $G$ on $n$ vertices. It is well-known that $\mathrm{ex}(n,F)/\binom{n}{3}$ tends to a limit $\pi(F)$ as $n\rightarrow \infty$; this limit is known as the \emph{Tur\'an density} of $F$. Similarly, $\mathrm{ex}_2(n, F)/(n-2)$ tends to a limit $\gamma(F)$ called the \emph{codegree density} or \emph{$2$-Tur\'an density} of $F$ as $n\rightarrow \infty$. The extremal theory of $3$-graphs and within it the study of Tur\'an-type problems have received extensive attention from the combinatorics community since the 1950s , with strenuous efforts devoted in particular to the (in)famous and still-open conjecture of Tur\'an~\cite{Turan41} that $\pi(K_4)=5/9$. 
See the surveys of F\"uredi~\cite{Furedi91} and Keevash~\cite{Keevash11} for an overview of results. 
There has been significant interest in other extremal quantities, and in particular in codegree densities for $3$-graphs. The first result on codegree density was due to Mubayi~\cite{Mubayi05}, who showed $\gamma(\mathrm{Fano})=\frac{1}{2}$. Keevash and Zhao~\cite{KeevashZhao07} determined the codegree densities of some projective geometries, which included the Fano plane as a special case. The codegree threshold for the Fano plane was determined by Keevash~\cite{Keevash09} via hypergraph regularity and later by DeBiasio and Jiang~\cite{DeBiasioJiang12} by direct combinatorial means.  Mubayi and Zhao~\cite{MubayiZhao07} studied general properties of codegree density, while Falgas-Ravry~\cite{FalgasRavry13} gave examples of non-isomorphic lower bound constructions for $\gamma(K_t)$. More recently Falgas--Ravry, Marchant, Pikhurko and Vaughan~\cite{FalgasRavryMarchantPikhurkoVaughan15} determined the codegree threshold of $F_{3,2}$, and Falgas-Ravry, Pikhurko and Vaughan~\cite{FalgasRavryPikhurkoVaughan15+} showed $\gamma(K_4^-)=\frac{1}{4}$ via a flag algebra computation, resolving a conjecture of Czygrinow and Nagle~\cite{Nagle99}. Another conjecture of Czygrinow and Nagle\cite{CzygrinowNagle01} remains open, namely that $\gamma(K_4)=\frac{1}{2}$. Certainly $\gamma(F)\leq c_2(F)$ for any $3$-graph $F$, and it may be hoped that giving good upper bounds for the latter may also help bounding the former.


In addition to these Tur\'an-type problems, there has been much research activity on the problem of determining thresholds for the existence of $F$-factors. The situation for ordinary ($2$-)graphs is now well-understood: the celebrated Hajnal--Szemer\'edi theorem~\cite{HajnalSzemeredi70} gives the exact minimum degree condition guaranteeing the existence of $F$-factors in an $n$-vertex graph when $F$ is a clique, while K\"uhn and Osthus~\cite{KuhnOsthus09} determined the minimum degree condition for general graphs $F$  up to an additive constant. 
On the other hand, until recently not much was known about tiling for $k$-graphs when $k\ge 3$. While there has been a spate of results in the last few years, see~\cite{czygrinow2015, CzygrinowDeBiasioNagle14, GaoHan15, HanLoTreglownZhao15, KeevashMycroft14, KuhnOsthus06, LoMarkstrom13, LoMarkstrom11, Mycroft14, RodlRucinskiSzemeredi09}, many more open problems remain. We refer to the surveys of R\"odl and Ruci\'nski~\cite{RodlRucinski11} and Zhao~\cite{Zhao15} for a more detailed discussion of the area, and briefly mention below four results relevant to the present work. For $i\in\{1,2\}$ and $n\equiv 0 \mod \vert V(F)\vert$, let 
\[t_i(n, F) = \max\{\delta_i(G): \  \vert V(G) \vert = n \textrm{ and }G \textrm{ does not have an $F$-factor}\}.\]
Trivially $c_i(n, F)\leq t_i(n,F)$ for any $3$-graph $F$ with at least one edge. 
Lo and Markstr\"om~\cite{LoMarkstrom11, LoMarkstrom13} determined $t_2(n,F)$ asymptotically when $F=K_4$ and $F=K_4^-$. Independently Keevash and Mycroft~\cite{KeevashMycroft14} determined $t_2(n, K_4)$ exactly, and recently Han, Lo, Treglown and Zhao~\cite{HanLoTreglownZhao15} determined $t_2(n, K_4^-)$ exactly as well, in both cases for $n$ sufficiently large. 
Finally in~\cite{HanZangZhao15} Han, Zang and Zhao asymptotically determined $t_1(n, K)$ for all complete $3$-partite $3$-graphs $K$. In particular, they showed that $t_1(n, K)= c_1(n,K)=(6-4\sqrt{2}+o(1))\binom{n}{2}$ for \emph{certain} $K$. This gives further motivation for the present paper: by determining $c_2(n,F)$ for $3$-graphs $F$, we may hope likewise to shed light on $t_2(n, F)$ and facilitate its (asymptotic) computation.

\subsection{Results}\label{subsection: results}
In this paper, we determine the codegree covering threshold for $K_4$ for sufficiently large $n$. 


\begin{theorem} \label{theorem: K4 threshold} 
For every $n \in \mathbb{N}$, $\left\lfloor \frac{2n-5}{3}\right\rfloor\leq c_2(K_4,n)\leq \left\lfloor \frac{2n-3}{3}\right\rfloor$.
Furthermore, for every $n> 98$,
\[c_2(n, K_4) = \left\lfloor \frac{2n-5}{3}\right\rfloor.\]
\end{theorem}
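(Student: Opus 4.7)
The lower bound is achieved by an explicit construction. Fix a vertex $x$ and partition $V\setminus\{x\}$ into three parts $V_1, V_2, V_3$ of sizes as balanced as possible. Declare $xab$ an edge iff $a,b$ lie in distinct parts, and declare a triple $abc \subseteq V\setminus\{x\}$ an edge iff $abc$ is \emph{not rainbow} (not having exactly one vertex in each $V_i$). The link graph $G_x$ is then the complete tripartite graph on $V_1\cup V_2\cup V_3$, so every triangle of $G_x$ is rainbow and, by construction, not an edge of $G$; thus $x$ is uncovered. A short case check on $n\bmod 3$ shows the minimum codegree is attained on a pair $yz$ with $y\in V_i, z\in V_j$, $i\ne j$, and evaluates to $\lfloor(2n-5)/3\rfloor$.

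For the upper bound, suppose $G$ has an uncovered vertex $x$ and write $d := \delta_2(G)$. Since $\delta(G_x)\geq d\geq 1$ we may pick any edge $uv\in G_x$. A copy of $K_4$ through $x$ would require $w\in V\setminus\{x,u,v\}$ with $uw,vw\in G_x$ and $uvw\in E(G)$, so the sets $A := N_{G_x}(u)\cap N_{G_x}(v)$ and $B := \Gamma_G(u,v)\setminus\{x\}$ are disjoint subsets of $V\setminus\{x,u,v\}$. Using $|A|\geq 2d-n+1$ (from $\delta(G_x)\geq d$ and $|V(G_x)|=n-1$) and $|B|\geq d-1$ (noting $xuv\in E(G)$ gives $x\in\Gamma_G(u,v)$), disjointness yields
\[(2d - n + 1) + (d - 1) \leq n - 3,\qquad\text{i.e.,}\qquad 3d \leq 2n - 3,\]
so $d \leq \lfloor(2n-3)/3\rfloor$. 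This already matches the lower bound when $n\equiv 1\pmod 3$.

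For $n\equiv 0$ or $2\pmod 3$ the remaining gap of $1$ is closed by a stability argument, which is where the hypothesis $n>98$ enters. Assuming $d=\lfloor(2n-3)/3\rfloor$, the inequality $d_{G_x}(u)+d_{G_x}(v)+d_G(u,v)\leq 2n-3$ is tight (or off by at most $1$ when $n\equiv 2\pmod 3$) for every edge $uv\in G_x$, which forces $d_{G_x}(u) = d_{G_x}(v) = d_G(u,v) = d$ (up to slack $1$) and pins down the partition $V\setminus\{x,u,v\} = A\sqcup B$ with $|A| = 2d-n+1$, $|B| = d-1$. Equivalently, for every such edge $uv\in G_x$ and every $w\in V\setminus\{x,u,v\}$, the triple $uvw$ is an edge of $G$ \emph{if and only if} $uvw$ is not a triangle of $G_x$ (the ``only if'' is the uncovered hypothesis; the ``if'' is the new information). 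The plan is to propagate this local rigidity across overlapping edges of $G_x$, deducing that $G_x$ is close to a complete tripartite graph on $V\setminus\{x\}$ and that $G[V\setminus\{x\}]$ is determined by this tripartition as in the lower bound construction; but any such $G$ in turn satisfies $\delta_2(G) = \lfloor(2n-5)/3\rfloor < d$, a contradiction. The main obstacle is exactly this stability step: promoting the partition structure seen through a single edge of $G_x$ to a coherent global near-tripartite structure on the full vertex set, handling the extra slack of $1$ permitted when $n\equiv 2\pmod 3$, and ruling out degenerate configurations at small scales (the explicit threshold $n>98$ presumably emerging from the quantitative bounds produced by this propagation).
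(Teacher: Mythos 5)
Your lower bound construction is identical to the paper's $F_1(n)$, and your codegree computation is correct. Your upper bound argument for $\lfloor (2n-3)/3\rfloor$ is correct and, I think, nicer than the paper's: the paper proves a general embedding lemma (Lemma~\ref{lemma: trivial upper bound}, a greedy vertex-by-vertex embedding of an arbitrary $F$) and specialises to $K_4$, whereas you observe directly that for a fixed edge $uv\in G_x$ the sets $A=N_{G_x}(u)\cap N_{G_x}(v)$ and $B=\Gamma_G(u,v)\setminus\{x\}$ are disjoint subsets of $V\setminus\{x,u,v\}$, and the resulting inequality $(2d-n+1)+(d-1)\le n-3$ gives the bound in one line. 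That is a clean, self-contained argument for this specific target graph.

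However, the refinement from $\lfloor(2n-3)/3\rfloor$ down to $\lfloor(2n-5)/3\rfloor$ for $n>98$ --- the part that is the actual content of the theorem and roughly ninety percent of the paper's proof --- is left as a sketch, and you yourself flag the stability propagation as ``the main obstacle.'' This is a genuine gap, not just an omitted routine verification. Some concrete reasons why the plan as stated is not yet a proof. First, you anchor your rigidity on a single edge $uv\in G_x$. The paper instead anchors on a triangle $\{ab,bc,ac\}\subseteq G_x$ (which exists because $\delta(G_x)>(n-1)/2$), and this is essential: working with the $4$-set $S=\{a,b,c,x\}$ yields a classification (Lemma~\ref{lem:Sy}) of the possible intersection patterns $S_y$ of outside vertices with $S$, which in turn drives the counting. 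From a single edge you see only a bipartition $A\sqcup B$ of $V\setminus\{x,u,v\}$, and making different such local bipartitions cohere into one global tripartition of $V\setminus\{x\}$ is exactly the nontrivial propagation step that is missing. Second, even your claimed endgame (``any such near-tripartite $G$ has $\delta_2(G)=\lfloor(2n-5)/3\rfloor$, contradiction'') needs care: closeness to the tripartite construction does not by itself pin down the minimum codegree, and the actual contradiction in the paper is a precise local inequality --- for a pair $v_1\in V_1$, $v_2\in V_2$ with $xv_1v_2\in E(G)$, no tripartite triple $v_1v_2v_3$ can be present, so $d(v_1,v_2)\le |V_1|+|V_2|-1$, which beats the codegree hypothesis by exactly one. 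Third, your route treats $n\equiv 0$ and $n\equiv 2\pmod 3$ uniformly, but in the paper the $n\equiv 0$ case closes immediately from Lemma~\ref{lem:Sy} and a single codegree-sum inequality with no $n>98$ hypothesis needed, while only the slack case $n\equiv 2\pmod 3$ requires the long chain of claims (Claims~\ref{claim: all bad vx degrees small}--\ref{claim: all missing x degrees small}) and the size threshold. Your proposal should be regarded as a correct proof of the easy part of the theorem, with the hard part acknowledged but not carried out.
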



In addition, we determine $c_2(F)$ when $F$ is $K_4^-$, the strong $5$-cycle $C_5$, and $K_5^-$ --- in fact in each case we give upper and lower bounds on $c_2(n, F)$ differing by at most $2$.
\begin{theorem}\label{theorem: k4-}
	Suppose $n=6m + r$ for some $r \in\{0,1,2,3,4,5\}$ and $m\in \mathbb{N}$, with $n\geq 7$. Then 
	\begin{align*}
	c_2(n, K_4^-)&=\left\{\begin{array}{ll}
	2m-1 \textrm{ or } 2m& \textrm{if }r=0\\
	2m & \textrm{if }r \in \{1,2\}\\
	2m \textrm{ or } 2m+1& \textrm{if }r\in \{3,4\}\\
	2m+1& \textrm{if }r=5.
	\end{array}\right.
	\end{align*}
	In particular, $c_2(K_4^-)=\frac{1}{3}$.
\end{theorem}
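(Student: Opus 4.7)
The plan is to prove matching upper and lower bounds on $c_2(n,K_4^-)$.

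\emph{Upper bound.} I would first reformulate ``$x$ is uncovered by any $K_4^-$'' as two structural conditions: (a)~the link $G_x$ is triangle-free, for a triangle $\{u,v,w\}$ in $G_x$ together with $x$ gives a $K_4^-$ with apex $x$; and (b)~for every $y \in V(G) \setminus \{x\}$, the set $\Gamma(x,y) = N_{G_y}(x)$ is an independent set in $G_y$, for otherwise an edge of $G_y$ within $\Gamma(x,y)$ witnesses a $K_4^-$ with apex $y$ covering $x$. Now fix any edge $xyz \in E(G)$: the three sets $\Gamma(x,y)$, $\Gamma(x,z)$, $\Gamma(y,z)$ are pairwise disjoint subsets of $V(G)$. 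Indeed, a common element $w$ of $\Gamma(x,y) \cap \Gamma(x,z)$ would form a triangle $\{y,z,w\}$ in $G_x$ violating~(a), and common elements of $\Gamma(x,y) \cap \Gamma(y,z)$ or $\Gamma(x,z) \cap \Gamma(y,z)$ are ruled out by~(b) applied at apex $y$ and apex $z$, respectively. Summing cardinalities yields
\[
d(x,y) + d(x,z) + d(y,z) \le n,
\]
so $\delta_2(G) \le \lfloor n/3 \rfloor$. This matches the upper value of each claimed range.

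\emph{Lower bound construction.} I would take a triangle-free graph $H$ on $n-1$ vertices and build $G$ on $\{x\} \cup V(H)$ by declaring $xyz \in E(G)$ iff $yz \in E(H)$, and for $y, z, w \in V(H)$, $yzw \in E(G)$ iff $H[\{y,z,w\}]$ has at most one edge. Then $G_x = H$ is triangle-free, and for any $y$ and any $z, z' \in N_H(y)$ the triple $\{y,z,z'\}$ contains the two $H$-edges $yz, yz'$, forcing $yzz' \notin E(G)$; hence (a) and (b) hold and $x$ is uncovered. A direct computation, using that adjacent vertices of a triangle-free graph have no common neighbor, gives
\[
d_G(x,y) = d_H(y), \quad d_G(y,z) = \begin{cases} n - d_H(y) - d_H(z) & \text{if } yz \in E(H), \\ n - 3 - |N_H(y) \cap N_H(z)| & \text{if } yz \notin E(H). \end{cases}
\]
So $\delta_2(G)$ is maximised by choosing $H$ triangle-free, near-regular of degree $\lfloor n/3 \rfloor$, with common neighborhoods at most $n - 3 - \lfloor n/3 \rfloor$.

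\emph{Case analysis.} For $r \in \{1, 2, 5\}$, where the claimed value is unique, explicit $\lfloor n/3 \rfloor$-regular triangle-free graphs on $n-1$ vertices with small common neighborhoods (bipartite circulants such as $K_{a,b}$ minus a matching for $n-1$ even, Cayley graphs of $\mathbb{Z}_{n-1}$ with suitable connection set otherwise; small seed examples include $C_5$, $C_7$, the Petersen graph) furnish $\delta_2(G) = \lfloor n/3 \rfloor$, matching the upper bound. For $r \in \{0, 3, 4\}$ a divisibility/parity obstruction can prevent an $\lfloor n/3 \rfloor$-regular triangle-free graph from existing (most notably $r = 4$, where $(2m+1)$-regular on $6m+3$ vertices fails the handshake parity), so the triangle-free template can only guarantee the lower of the two claimed values with an $H$ of degree one below $\lfloor n/3 \rfloor$. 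Whether the upper value is attainable for a given such $n$ depends on finer constructions, which is what the theorem's ``$a$ or $a+1$'' ranges reflect. The asymptotic identity $c_2(K_4^-) = \tfrac{1}{3}$ is then immediate.

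\emph{Main obstacle.} The upper bound boils down to one clean three-term disjoint-union identity once conditions (a) and (b) are identified. The substantive work lies in the lower bound construction: producing triangle-free near-regular graphs $H$ with the required common-neighborhood bound for each residue class and handling the parity obstructions in $r \in \{0, 3, 4\}$ that prevent sharpening the theorem to a single value there.
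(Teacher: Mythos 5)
Your upper bound is correct and in essentially the same spirit as the paper's. The structural characterization of an uncovered vertex $x$ --- the link $G_x$ is triangle-free and each $\Gamma(x,y)$ is independent in $G_y$ --- is a neat reformulation, and the pairwise disjointness of $\Gamma(x,y),\Gamma(x,z),\Gamma(y,z)$ for any $3$-edge $xyz$ immediately gives $3\delta_2(G)\le n$; the paper applies a more direct pigeonhole to $d(a,b)+d(a,x)+d(b,x)$ with a $-3$ correction, arriving at the same bound $c_2(n,K_4^-)\le\lfloor n/3\rfloor$. Your lower bound, however, is a genuine generalization. The paper's $F_2(n)$ is exactly your template with $H$ the (near-)balanced blow-up of $C_6$: the forbidden triple types $V_iV_iV_{i+1}$, $V_iV_{i+1}V_{i+1}$, $V_iV_{i+1}V_{i+2}$ are precisely the triples spanning two $H$-edges. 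Your codegree formulas are correct, and for a $d$-regular triangle-free $H$ one indeed gets $\delta_2(G)=\min(d,\,n-2d,\,n-3-\lambda)$ where $\lambda$ is the maximum codegree of a non-edge of $H$; since $\lambda\le d\approx n/3 < n-3-d$ once $n$ is moderately large, the common-neighborhood constraint is automatic, and the only real obstruction to attaining $\delta_2(G)=\lfloor n/3\rfloor$ is the existence of an $\lfloor n/3\rfloor$-regular triangle-free graph on $n-1$ vertices. Here you slip slightly: the handshake parity $(n-1)\lfloor n/3\rfloor$ is odd only when $r=4$ (where $n-1=6m+3$ and $d=2m+1$ are both odd), not for $r\in\{0,3\}$ as you assert; for those residues suitable regular graphs exist (e.g.\ the interval circulant on $\mathbb{Z}_{n-1}$ with symmetric sum-free connection set $\bigl(\tfrac{n-1}{3},\tfrac{2(n-1)}{3}\bigr)$ when $n-1$ is odd, or a bipartite $d$-regular graph when $n-1$ is even), so your framework actually pins down $c_2(n,K_4^-)=\lfloor n/3\rfloor$ exactly for $r\in\{0,3\}$, \emph{tightening} the theorem's two-value ranges there rather than merely matching them. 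This is an underclaim, not a flaw in the theorem as stated, but it does mean your case analysis is not quite the right one. Beyond this, the existence checks for each residue class (and for small $n$) are left at the sketch level, and the specific example ``$K_{a,b}$ minus a matching'' has degree about $(n-1)/2$ rather than the required $n/3$ and should be replaced by a sparser bipartite regular graph; with those routine details filled in, the approach is sound and does prove the theorem.
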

\begin{theorem}\label{theorem:  c5}
	$ \lfloor \frac{n-3}{2}\rfloor \leq c_2(n, C_5)\leq \lfloor \frac{n}{2}\rfloor$. In particular, $c_2(C_5)=\frac{1}{2}$.
\end{theorem}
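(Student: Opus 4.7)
For the lower bound $c_2(n, C_5) \geq \lfloor (n-3)/2 \rfloor$, I would use the following construction. Fix a vertex $x$, partition $V \setminus \{x\} = A_0 \sqcup A_1$ with $|A_0| = \lfloor (n-1)/2 \rfloor$ and $|A_1| = \lceil (n-1)/2 \rceil$, and let $G$ have as edges exactly the triples $\{x, u, v\}$ with $u, v$ both in a common part $A_i$, together with all non-$x$ triples using vertices from both parts. Then $G_x = K_{A_0} \sqcup K_{A_1}$. Any $P_4$ in $G_x$ is connected and therefore lies inside a single $A_i$, so the two non-$x$ triples required to close a $C_5$ through $x$ would also live inside that $A_i$, contradicting the construction. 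A short case check on pair types then gives $\delta_2(G) = \lfloor (n-3)/2 \rfloor$, attained by a pair $\{x, u\}$ with $u$ in the smaller part.

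For the upper bound $c_2(n, C_5) \leq \lfloor n/2 \rfloor$, I would assume $\delta_2(G) \geq \lfloor n/2 \rfloor + 1$ and, fixing an arbitrary vertex $x$, parametrise a $C_5$ through $x$ cyclically as $(x, y_1, a, d, y_2)$. The five edges translate to the joint conditions $\{y_1, y_2\} \in E(G_x)$, $a \in \Gamma(x, y_1)$, $d \in \Gamma(x, y_2)$, and $\{a, d\} \in E(G_{y_1}) \cap E(G_{y_2})$. Equivalently, it suffices to find some pair $\{a, d\} \subseteq V \setminus \{x\}$ such that the set $S(a, d) := \Gamma(x, a) \cap \Gamma(x, d) \cap \Gamma(a, d)$ spans an edge of $G_x$. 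The combinatorial identity $\sum_{\{a, d\}} |S(a, d)| = \sum_{y \ne x} e(G_y[\Gamma(x, y)])$ holds, and under the assumption $\delta_2 > n/2$, each term on the right is $\Omega(n)$ since $G_y[\Gamma(x, y)]$ has minimum degree at least $2 \delta_2 - (n - 1) \geq 2$ on at least $\delta_2$ vertices.

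The main obstacle is then concluding that some $S(a, d)$ actually spans an edge of $G_x$: the first-moment average only forces $|S(a, d)| = \Omega(1)$, whereas the independence number $\alpha(G_x) \leq n - 1 - \delta_2$ can be as large as $n/2 - 2$, so merely having $|S(a, d)|$ large does not suffice. My plan is to count directly the triples $(\{y_1, y_2\}, \{a, d\})$ with $\{y_1, y_2\} \in E(G_x)$ and $y_1, y_2 \in S(a, d)$, whose positivity is equivalent to finding a $C_5$ through $x$; this is a double sum that can be reorganised by summing over edges of $G_x$ and by using the minimum-degree conditions on the link graphs $G_{y_1}$ and $G_{y_2}$. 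As a backup, I would reason by contradiction: assuming $x$ lies in no $C_5$ and extracting structural rigidity from the links $G_{y_1}$, $G_{y_2}$, and $G_x$, one should derive an arithmetic inconsistency with $\delta_2 > n/2$, possibly losing only an additive constant (consistent with the claimed gap of at most $2$). The asymptotic identity $c_2(C_5) = \tfrac{1}{2}$ then follows from the two bounds.
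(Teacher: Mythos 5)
Your lower bound construction is exactly the paper's $F_3(n)$, and your argument that any path of length three in $G_x$ stays inside one part (so that the two non-$x$ edges needed to close a tight $5$-cycle would be internal to that part, contradicting the fact that all non-$x$ edges cross) is the same as the paper's. The codegree computation $\delta_2 = \lfloor (n-3)/2\rfloor$ also matches. That half is fine.

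The upper bound, however, is only a plan, not a proof, and the plan has a real gap. You correctly identify that the first-moment estimate $\sum_{\{a,d\}} |S(a,d)| = \sum_{y\ne x} e\bigl(G_y[\Gamma(x,y)]\bigr)$ together with $\delta_1\bigl(G_y[\Gamma(x,y)]\bigr) \ge 2\delta_2 - (n-1) \ge 2$ only yields $|S(a,d)| = \Omega(1)$ on average, which is nowhere near enough to conclude that some $S(a,d)$ spans an edge of $G_x$ (indeed $G_x$ can have independent sets of size up to $n-1-\delta_2 \approx n/2$). The proposed remedy --- a reorganised double count of pairs $(\{y_1,y_2\},\{a,d\})$ with $y_1y_2 \in E(G_x)$ and $y_1,y_2\in S(a,d)$ --- is stated as a goal but not carried out, and it is not clear that the minimum-degree information on the links $G_{y_1}, G_{y_2}$ actually forces such a pair to exist; there is no argument that the count is positive. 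The ``backup'' contradiction argument via ``structural rigidity'' is similarly unspecified. As it stands, the upper bound direction is an unproven claim.

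For comparison, the paper's upper-bound proof is short and concrete: assuming $\delta_2(G) > n/2$ and fixing $x$, one finds a triangle $\{a,b,c\}$ in $G_x$ (possible since $\delta_1(G_x) > n/2$), sets $S=\{a,b,c,x\}$, and sums the six codegrees of pairs in $S^{(2)}$. If $abc\notin E(G)$, the sum exceeds $3(n-4)+9$, so by pigeonhole some $y\notin S$ makes an edge with at least four of the six pairs, and a short case check then produces either a $C_5$ or a $K_4$ through $x$. If one instead has a $K_4$ on $S$ through $x$, the same codegree sum (now exceeding $3(n-4)+12$) again gives such a $y$, and a three-case check produces a $C_5$ through $x$. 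You would do well to try this more direct route: rather than averaging over all pairs $\{a,d\}$, anchor the argument to a single triangle in $G_x$ and use pigeonhole on the six codegrees of the resulting $4$-set. Your parametrisation of the $C_5$ through $x$ and the abbreviation $S(a,d)$ are not wrong, but they trade a finite case analysis for an averaging problem that you have not solved.
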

Interestingly, there is no unique stable near-extremal configuration for Theorem~\ref{theorem:  c5}: at least two configurations at edit distance $\Omega(n^3)$ of each other exist, see Remark~\ref{remark: c5 unstable}.
\begin{theorem}\label{theorem: k5-}
	$\left\lfloor \frac{2n-5}{3}\right\rfloor \leq c_2(n, K_5^-)\leq \left\lfloor\frac{2n-2}{3}\right\rfloor$. In particular, $c_2(K_5^-)= \frac{2}{3}$.
\end{theorem}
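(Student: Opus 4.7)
The limit $c_2(K_5^-) = 2/3$ follows immediately from the two-sided bounds upon dividing by $n - 2$ and letting $n \to \infty$, so I focus on establishing those bounds.

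For the lower bound $c_2(n, K_5^-) \geq \lfloor (2n-5)/3 \rfloor$, I exploit the observation that every vertex of a copy of $K_5^-$ is contained in a copy of $K_4$ within that same $K_5^-$. Indeed, if the missing triple is $\{1,2,3\}$, then $\{1,3,4,5\}$, $\{1,2,4,5\}$ and $\{2,3,4,5\}$ are all copies of $K_4$, and together they cover every vertex of $\{1,2,3,4,5\}$. Thus a $3$-graph with some vertex $v_0$ contained in no $K_4$ also has $v_0$ contained in no $K_5^-$. The extremal construction witnessing the lower bound in Theorem~\ref{theorem: K4 threshold} produces such a vertex $v_0$ at minimum codegree $\lfloor (2n-5)/3 \rfloor$, and therefore also gives $c_2(n, K_5^-) \geq \lfloor (2n-5)/3 \rfloor$.

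For the upper bound $c_2(n, K_5^-) \leq \lfloor (2n-2)/3 \rfloor$, I suppose $\delta_2(G) \geq \lfloor (2n-2)/3 \rfloor + 1$ and prove that every $v \in V(G)$ lies in a copy of $K_5^-$. Since $\lfloor (2n-2)/3 \rfloor + 1 > \lfloor (2n-3)/3 \rfloor$, Theorem~\ref{theorem: K4 threshold} guarantees $G$ has a $K_4$-covering, so I fix a $K_4$ $K = \{v,a,b,c\}$ through $v$. For each $d \in V \setminus K$, let $f(d)$ denote the number of the six triples $\{d\} \cup p$, $p \in K^{(2)}$, that lie in $G$. Since $K$ contributes its 4 internal triples to $G$, the five-vertex set $\{v,a,b,c,d\}$ spans a copy of $K_5^-$ (or $K_5$) as soon as $f(d) \geq 5$, so it suffices to find such a $d$.

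I finish by averaging. We have $\sum_{d \in V \setminus K} f(d) = \sum_{p \in K^{(2)}} |\Gamma(p) \cap (V \setminus K)|$. For each pair $p \subset K$, both vertices of $K \setminus p$ lie in $\Gamma(p)$ (since $K$ is a $K_4$), so $|\Gamma(p) \cap (V \setminus K)| \geq \delta_2(G) - 2$, and summing across the six pairs gives $\sum_{d \in V \setminus K} f(d) \geq 6(\delta_2(G) - 2)$. If $f(d) \leq 4$ for every $d \in V \setminus K$, this forces $6(\delta_2(G) - 2) \leq 4(n-4)$, i.e.\ $\delta_2(G) \leq (2n-2)/3$, contradicting our hypothesis $\delta_2(G) > (2n-2)/3$. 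Hence some $d$ has $f(d) \geq 5$, yielding the required $K_5^-$ through $v$. I do not foresee any serious obstacle: the proof is built on Theorem~\ref{theorem: K4 threshold} and a one-line averaging argument, and the gap of one in $\delta_2$ between the lower and upper bounds corresponds exactly to the discrepancy between the average value of $f(d)$ (which approaches $4$ at the boundary) and the value $5$ required to extend a given $K_4$ to a $K_5^-$.
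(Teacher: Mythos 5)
Your proof is correct and takes essentially the same route as the paper's: the lower bound both follows from $c_2(n,K_5^-)\geq c_2(n,K_4)$ (which relies, as you make explicit, on every vertex of $K_5^-$ lying in a $K_4$-subgraph), and the upper bound both first invokes Theorem~\ref{theorem: K4 threshold} to place the target vertex in a copy of $K_4$ and then uses a codegree average over the six pairs of that $K_4$ to find a fifth vertex making at least five edges with those pairs.
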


Let us compare the Tur\'an density $\pi$, the (existence) codegree density $\gamma$, the covering codegree density $c_2$, and the tiling codegree density $t_2$ of $K_4$, $K_4^-$, and $C_5$ in the following table (for a 3-graph $F$ of order $f$, define $t_2(F)=\lim_{n= mf \rightarrow \infty}t_2(n, F)/(n-2)$ if this limit exists). In the table question marks indicate conjectures, except for $t_2(C_5)$, for which we are not aware of any conjecture.

\begin{center}
\medskip
{\renewcommand{\arraystretch}{1.5}
\renewcommand{\tabcolsep}{0.2cm}
\begin{tabular}{| c | c | c | c | c |}
\hline
	& $\gamma$ & $\pi$ & $c_2$ & $t_2$\\ \hline
$K_4$ & $\frac12$? \cite{CzygrinowNagle01} & $\frac59$? \cite{Turan41} & $\frac23$ & $\frac34$ \cite{KeevashMycroft14, LoMarkstrom11}\\ \hline
 $K_4^-$ & $\frac14$ \cite{FalgasRavryPikhurkoVaughan15+} &$\frac27$? \cite{FranklFuredi84} &  $\frac13$ & $\frac12$ \cite{LoMarkstrom13} \\ \hline
$C_5$ & $\frac13$? \cite{Marchant11} & $2\sqrt{3} - 3$? \cite{MubayiRodl02} & $\frac12$ & ? \\
\hline 
\end{tabular} }
\medskip
\end{center}
Finally we give bounds on $c_2(\mathrm{Fano})$, $c_2(F_{3,2})$ and $c_2(K_t)$ for $t\ge 5$, and pose a number of questions.

Our paper is structured as follows. 
In Section~\ref{section: k4 results}, we determine the codegree covering threshold for $K_4$ and characterize the extremal configurations. 
In Section~\ref{section: other 3-graphs}, we prove our bounds on $c_2(n,F)$ for the other $3$-graphs $F$ mentioned above. We end in Section~\ref{section: conclusion} with some discussion and questions.

\section{The covering codegree threshold for $K_4$}\label{section: k4 results}
In this section we determine the codegree threshold $c_2(n, K_4)$. We give a lower bound construction in Section~\ref{subsection: k4 covering codegree density} and prove the upper bound in Section~\ref{sec:K4upper}. Finally, in Section~\ref{subsection: K4 extremal constructions} we provide other extremal constructions, and state a stability theorem that helps to show that these constructions are all possible extremal configurations; as the proofs of these latter results are similar to the proof of Theorem~\ref{theorem: K4 threshold} 
we defer them to the appendix. 
 

\subsection{Lower bound}\label{subsection: k4 covering codegree density}
\begin{proof}[Proof of the lower bound in Theorem~\ref{theorem: K4 threshold}]
	We construct a $3$-graph $F_1(n)$ on $V=[n]$. Select a special vertex $x$. Split the remainder of the vertices into three parts $V_1\sqcup V_2 \sqcup V_3=V\setminus\{x\}$ with sizes as equal as possible, 
	\[\vert V_3\vert -1\leq \vert V_1\vert \leq \vert V_2\vert \leq \vert V_3\vert.\] 
	Put in as the link graph of $x$ all pairs between distinct parts, i.e. add in all triples of the form $xV_iV_j$ for $i\neq j$. Further, add in all triples not containing $x$ and meeting at most two of the three parts $(V_i)_{i=1}^3$. Denote the resulting $3$-graph by $F_1=F_1(n)$. The complement of $F_1(n)$ is shown in Figure~1. 

	\begin{figure}\label{figure: k4 construction}
		
		\centering
		\begin{tikzpicture}
		
		\draw (0,0) circle [radius=1];
		\draw (3,0) circle [radius=1];
		\coordinate (gamma) at ($ (0,0)!1!60:(3,0) $);
		\draw (gamma) circle [blue, radius=1];
		
		\coordinate (delta) at ($ (0,0)!1!30:(1.73,0) $);
		\fill[red] (delta) circle (2pt) node[above] {$x$};

		\coordinate (a) at ($ (0,0)!1!30:(0.25,0) $);
		\coordinate (b) at ($ (3,0)!1!150:(3.25,0) $);
		\coordinate (c) at ($ (a)!1!60:(b) $);
		\draw[blue!100, thick, fill=blue!10] (a)-- (b) -- (c) -- (a);
		
		\coordinate (a1) at (-0.4, 0);
		\coordinate (a2) at (0, -0.4);
		\draw[thick, red!100] (a1) -- (a2); 
		
		\coordinate (b1) at (3.4, 0);
		\coordinate (b2) at (3, -0.4);
		\draw[thick, red!100] (b1) -- (b2);
		
		\coordinate (c1) at ($(gamma)+(-0.35,0.35)$);
		\coordinate (c2) at ($(gamma)+(0.35,0.35)$);
		\draw[thick, red!100] (c1) -- (c2);

		\node (aedge) at (-0.35,-0.35)[red] {$x$};
		\node (bedge) at (3.37 , -0.37)[red] {$x$};
		\node (cedge) at ($(gamma)+(0,0.5)$)[red] {$x$};

		\node (cn) at ($(gamma)+(0,1.5)$)[] {$V_3$};
		\node (an) at (0, -1.5)[] {$V_1$};
		\node (bn) at (3, -1.5)[] {$V_2$};
		\end{tikzpicture}
		\caption{The complement of $F_1(n)$. The red pairs and the blue triples are absent from the link graph of $x$ in $F_1$ and from $E(F_1)$ respectively.}
	\end{figure}
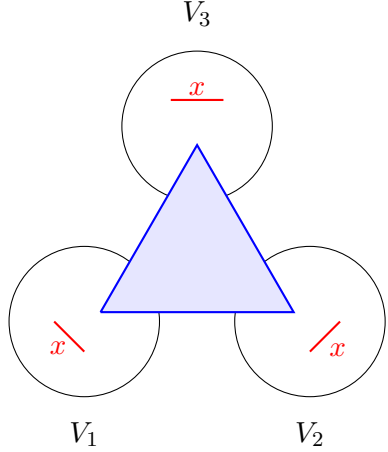

	Observe that $x$ is contained in no copy of $K_4$ in $F_1$: the only triangles in the link graph of $x$ are tripartite, and thus are not covered by any triple of the $3$-graph. Let us now compute the minimum codegree of $F_1$. For $v_i, v_i' \in V_i$ and $v_{i+1}\in V_{i+1}$, we have $d(v_i,x)= n-1-\vert V_i\vert$, $d(v_i,v_i')=n-3$ and $d(v_i, v_{i+1})=n-2-\vert V_{i+2}\vert $. The minimum codegree $\delta_2(F_1)$ is thus $n-2-\lceil \frac{n-1}{3}\rceil$, attained by pairs $(v_1,v_2)\in V_1\times V_2$. Writing $n=3m+r$ with $r \in\{0,1,2\}$ and $m \in \mathbb{N}$, we have shown that 
	\[c_2(3m+r, K_4)\geq \delta_2(F_1) = \left\{\begin{array}{ll}
	2m-2 & \textrm{if $r=0$}\\
	2m-1 & \textrm{if $r=1$ or $2$.}
	\end{array} \right.\]
	This lower bound can be expressed more compactly as $c_2(n, K_4)\geq \lfloor \frac{2n-5}{3} \rfloor$.
\end{proof}



\subsection{Upper bound}
\label{sec:K4upper}

Let us give a general upper bound for $c_2(n,F)$, which turns out to be surprisingly close to the truth in the case of $F=K_4$. 

\begin{lemma}\label{lemma: trivial upper bound}
	Given a $3$-graph $F$ with at least one $3$-edge, let $r$ be the maximum of $\delta_1(F')$ among all subgraphs $F'$ of $F$. Then $c_2(n, F)\le \lfloor (1- 1/r)n + (\vert V(F)\vert - 2r - 1)/r \rfloor$.
\end{lemma}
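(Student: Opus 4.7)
The plan is to prove the lemma by a standard greedy embedding argument. Write $t=\vert V(F)\vert$. I aim to show that any $n$-vertex $3$-graph $G$ with
\[\delta_2(G) > (1-1/r)n + (t-2r-1)/r\]
admits an $F$-covering; the floor expression in the conclusion then follows immediately because $\delta_2(G)$ is an integer, so the above strict inequality is equivalent to $\delta_2(G) \geq \lfloor(1-1/r)n + (t-2r-1)/r\rfloor + 1$.

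The first step is to fix a convenient vertex ordering of $F$. Since $r = \max_{F'\subseteq F}\delta_1(F')$, the $3$-graph $F$ is $r$-degenerate: each of its subgraphs has a vertex of degree at most $r$. The usual peeling procedure --- iteratively removing a minimum-degree vertex from what remains and labelling it $v_t, v_{t-1}, \ldots, v_1$ --- therefore produces an ordering $v_1, v_2, \ldots, v_t$ of $V(F)$ in which every $v_i$ has at most $r$ \emph{backward edges}, meaning edges of $F$ of the form $\{v_i,v_j,v_k\}$ with $j,k<i$. (For very small $i$ this is automatic because a triple cannot live inside $\{v_1,v_2\}$.)

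With this ordering fixed, the second step is the embedding. Given an arbitrary vertex $x \in V(G)$, I set $\phi(v_1)=x$ and extend $\phi$ one vertex at a time. At step $i \geq 2$, the image $\phi(v_i)$ must avoid the $i-1 \leq t-1$ vertices already chosen, and it must lie in $\Gamma(\phi(v_j),\phi(v_k))$ for each backward edge $\{v_i,v_j,v_k\}$ of $v_i$. Each codegree neighbourhood forbids at most $n-2-\delta_2(G)$ vertices, and there are at most $r$ backward edges at $v_i$, so the number of forbidden candidates for $\phi(v_i)$ is at most
\[
r\bigl(n-2-\delta_2(G)\bigr) + (t-1).
\]
The hypothesis on $\delta_2(G)$ rearranges to exactly the statement that this quantity is strictly less than $n$, so a valid $\phi(v_i)$ exists. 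Iterating up to $i=t$ produces a copy of $F$ through $x$, and since $x$ was arbitrary, $G$ has an $F$-covering.

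I expect no serious obstacle: both the degeneracy ordering and the codegree union bound are entirely routine. The main point to be careful about is simply that the algebraic inequality on $\delta_2(G)$ converts cleanly into the stated floor expression, and that the ordering used to embed is the same one for every choice of $x$, so the single degeneracy ordering of $F$ suffices to cover all of $V(G)$.
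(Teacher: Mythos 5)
Your proof is correct and takes essentially the same approach as the paper's: a degeneracy ordering of $V(F)$ (so that each vertex has at most $r$ backward edges) followed by a greedy embedding of $F$ starting from the given vertex $x$, with a union bound over codegree neighbourhoods and already-used vertices yielding exactly the threshold $r(n-2-\delta_2(G)) < n - (t-1)$. The only cosmetic difference is that the paper tracks the already-used count as $i-1$ at each step and observes the binding case is $i = t$, whereas you bound it by $t-1$ uniformly from the start; the resulting inequality is identical.
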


\begin{proof}
	Assume that $F$ contains $f$ vertices. We order the vertices of $F$ as $x_1, \dots, x_f$ such that $x_i$ is a vertex of minimum degree in the subgraph $F - \{x_{i+1}, \dots, x_f\}$. As $r= \max \delta_1(F')$ among all subgraphs $F'$ of $F$, we know that $x_i$ has at most $r$ neighbours among $x_1, \dots, x_i$. 
	
	Let $G$ be a 3-graph on $n$ vertices such that 
	\[
	\delta_2: = \delta_2(G)> \frac{r-1}{r} n + \frac{f-1}{r} - 2.
	\]
	Fix a vertex $v_1$ of $G$. We will find a copy of $F$ in $G$ by first mapping $x_1$ to $v_1$, $x_2$ to any other vertex $v_2$, and $x_3$ to any $v_3\in \Gamma_G(v_1, v_2)$.
	Suppose that $x_1, \dots, x_i$ have been embedded to $v_1, \dots, v_i$. In order to embed $x_{i+1}$, we consider the neighbours of $x_{i+1}$ among $x_1, \dots, x_i$. There are $t\le r$ such neighbours and they are mapped to pairs $p_1, \dots p_t$ of $v_1, \dots, v_i$. Each $p_j$ has at least $\delta_2$ neighbours in $G$ and thus at most $n-2 - \delta_2$ non-neighbours in $V(G) \setminus \{v_1, \dots, v_i\}$.
	By the definition of $\delta_2$ and $i\le f-1$, we have $r(n-2- \delta_2)< n - i$. Hence there exists a vertex $v_{i+1}\in V(G) \setminus \{v_1, \dots, v_i\}$ such that $v_{i+1}$ is a common neighbour of $p_1, \dots p_t$.  Continuing this process, we obtain a copy of $F$ as desired. 
\end{proof}
\begin{remark}
	The proof of Lemma~\ref{lemma: trivial upper bound} actually shows that if $\delta_2(G)> (1- 1/r)n + (\vert V(F)\vert - 2r - 1)/r $ then every \emph{triple} of $E(G)$ is covered by an $F$-subgraph.
\end{remark}

Applying Lemma~\ref{lemma: trivial upper bound} with $F=K_4$ and $r=3$, we obtain that $c_2(n, K_4) \leq \lfloor \frac{2n-3}{3}\rfloor$. When $n \equiv 1 \mod 3$, this implies that $c_2(n, K_4) \leq \lfloor \frac{2n-5}{3}\rfloor$. Together with the lower bound $c_2(n, K_4) \ge \lfloor \frac{2n-5}{3}\rfloor$, we obtain $c_2(n, K_4) = \lfloor \frac{2n-5}{3}\rfloor$ immediately.

When $n \equiv 0$ or $2 \mod 3$,  more work is required to reduce the upper bound to $\lfloor \frac{2n-5}{3}\rfloor$. In both cases, we shall make use of the following simple observation. 

\begin{lemma} \label{lem:Sy}
Let $G$ be a 3-graph on $n\ge 4$ vertices. Suppose that $x\in V(G)$ is not covered by any copy of $K_4$ and there exists $a, b, c\in V(G)$ such that $abx, bcx, acx\in E(G)$ (thus $abc\not\in E(G)$). Let $S=\{a, b, c, x\}$ and for each vertex $y \in V(G)\setminus S$, let $S_y$ consist of all the pairs of $S$ that make a $3$-edge with $y$ in $G$. Then $S_y$ must be a subset of one of the following sets: 
\[
S^{1, c}=\{ax,bx, ac, bc\}, \quad S^{1,b}= \{ax,cx, ab, bc\}, \quad S^{1,a}=\{bx, cx, ab, ac\}, 
\]
\[
S^{2,a}=\{ab,ac, bc, ax\}, \ S^{2,b}=\{ab,ac, bc, bx\}, \ S^{2,c}=\{ab,ac, bc, cx\}, \  S^3=\{ax,bx,cx\}.
\]
In particular, $|S_y|\le 4$. \qed
\end{lemma}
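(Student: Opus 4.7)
The plan is to reduce the lemma to a short finite case analysis by identifying precisely which $K_4$'s containing $x$ could be forced into existence by a candidate vertex $y$. As a preliminary observation (already parenthetically noted in the statement), since $x$ is not covered by any $K_4$ but $abx, bcx, acx \in E(G)$, we must have $abc \notin E(G)$, for otherwise $\{a,b,c,x\}$ would itself be a $K_4$ covering $x$.

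Now fix $y \in V(G) \setminus S$ and consider potential $K_4$'s of the form $\{x, y, u, v\}$ with $\{u,v\} \subseteq \{a,b,c\}$. Since $xuv \in E(G)$ automatically, such a $K_4$ exists precisely when all three pairs $uv$, $xu$, $xv$ belong to $S_y$. The hypothesis that $x$ is uncovered thus forbids each of the three triples
\[
\{ab,ax,bx\}, \qquad \{ac,ax,cx\}, \qquad \{bc,bx,cx\}
\]
from being fully contained in $S_y$. The lemma is now the purely combinatorial claim that any $S_y \subseteq \{ab, ac, bc, ax, bx, cx\}$ avoiding these three forbidden triples must be a subset of one of the seven listed sets $S^{1,a}, S^{1,b}, S^{1,c}, S^{2,a}, S^{2,b}, S^{2,c}, S^3$.

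I would conclude by case-splitting on $k = |S_y \cap \{ab, ac, bc\}|$. When $k=3$, the three constraints force $|S_y \cap \{ax,bx,cx\}| \leq 1$, yielding a subset of some $S^{2,*}$. When $k=2$, say $\{ab,ac\} \subseteq S_y$ and $bc \notin S_y$, the forbidden triples reduce to ruling out both $\{ax,bx\} \subseteq S_y$ and $\{ax,cx\} \subseteq S_y$; this splits further according to whether $ax \in S_y$, giving $S_y \subseteq S^{1,a}$ or $S_y \subseteq S^{2,a}$. The $k=1$ case is analogous and places $S_y$ inside one of the $S^{1,*}$ or $S^{2,*}$, while $k=0$ trivially gives $S_y \subseteq S^3$. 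Since each of the seven listed sets has at most four elements, the bound $|S_y|\leq 4$ follows immediately. There is no real obstacle here: once the forbidden-triple translation is in place, the argument is a direct enumeration.
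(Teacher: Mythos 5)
Your proof is correct and supplies exactly the argument the paper treats as obvious (the lemma is stated with an immediate \qed and no written proof). The key reduction—observing that the only $K_4$'s covering $x$ controllable by $S_y$ are those on vertex sets $\{x,y,u,v\}$ with $uv\in\{ab,ac,bc\}$, and that since $xab,xac,xbc\in E(G)$ this forbids each of $\{ab,ax,bx\}$, $\{ac,ax,cx\}$, $\{bc,bx,cx\}$ from lying entirely inside $S_y$—is precisely the right translation, and your case split on $k=\lvert S_y\cap\{ab,ac,bc\}\rvert$ cleanly enumerates the maximal sets avoiding those three triples, matching the seven listed. One small imprecision: in the $k=1$ subcase the outcome always lands inside an $S^{1,*}$, never an $S^{2,*}$ (e.g. with $ab\in S_y$, $ac,bc\notin S_y$, you get $S_y\subseteq\{ab,ax,cx\}\subseteq S^{1,b}$ or $S_y\subseteq\{ab,bx,cx\}\subseteq S^{1,a}$), so the phrase "one of the $S^{1,*}$ or $S^{2,*}$" is harmless but could be sharpened; this does not affect correctness.
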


\begin{proof}[Proof of $c_2(n, K_4) \leq \lfloor (2n-5)/{3}\rfloor$ when $3$ divides $n$]

Suppose $n=3m$ for some integer $m\ge 2$ (so that $\lfloor (2n-5)/{3}\rfloor = 2n/3 - 2$). Let $G=(V, E)$ be a 3-graph on $n$ vertices with $\delta_2(G)\ge 2n/3 - 1$. We claim that all vertices of $G$ are covered by copies of $K_4$.
Suppose instead, that some vertex $x\in V$ is not contained in a copy of $K_4$. Since the minimum degree in the link graph $G_x$ of $x$ is at least $2n/3 -1> (n-1)/2$, there exists a triangle $\{ab, bc, ac\}$ in $G_x$. This implies that $abc\notin E$. Set $S=\{a,b,c,x\}$. For each vertex $y \in V\setminus S$, by Lemma~\ref{lem:Sy}, at most four pairs of $S$ form edges of $G$ with $y$. 
Thus, by the codegree assumption, 
\[
6 \left(\frac{2n}{3} - 1 \right) \le d(a,x)+ d(b,x)+ d(c,x)+d(a,b)+d(b,c)+d(c,a) \le 4(n-4) +9,
\]
a contradiction. 
\end{proof}

\medskip
When $n \equiv 2 \mod 3$, we start the proof in the same way. However, since we only have $\delta_2(G)\ge (2n-4)/3$, we will not obtain a contradiction until we prove that $G$ has a similar structure as the 3-graph $F_1(n)$ given in Section~\ref{subsection: k4 covering codegree density}.

\begin{proof}[Proof of $c_2(n, K_4) \leq \lfloor \frac{2n-5}{3}\rfloor$ when $n \equiv 2 \mod 3$]

Suppose $n=3m+2> 98$. In order to show that $c_2(n, K_4) \le \lfloor \frac{2n-5}{3}\rfloor = (2n-7)/3$, consider a 3-graph $G=(V, E)$ on $n$ vertices satisfying $\delta_2(G)\ge (2n-4)/3$. 

%
%
%

Suppose that a vertex $x$ of $G$ is not contained in any copy of $K_4$. As $ (2n-4)/3 > (n-1)/2$, the link graph $G_x$ contains a triangle $\{ab, bc, ac\}$. Set $S=\{a,b,c,x\}$ and for each $y\in V\setminus S$, define $S_y$ as in Lemma~\ref{lem:Sy}. By Lemma~\ref{lem:Sy}, $S_y$ is a subset of $S^{1, c}, S^{1,b}$, $S^{1,a}$, $S^{2,a}$, $S^{2,b}$, $S^{2,c}$ or $S^3$. 
	For $i\in\{1,2\}$ and $j\in\{a,b,c\}$, write $s_{i,j}$ for the number of vertices $y \in V\setminus S$ for which $S_y = S^{i,j}$, and write $s_i$ for the sum $s_{i,a}+s_{i,b}+s_{i,c}$. Finally let $s_0$ be the number of vertices $y\in V\setminus S$ such that $S_y \neq S^{i,j}$ for any $i\in \{1,2\}$ and $j\in\{a,b,c\}$. Note that $|S_y|\le 3$ for such $y$. We know that $s_1+s_2+s_0 = n-4$. Furthermore, by the codegree assumption, 
	\begin{equation}\label{ax-bx-cx}
	3\, \frac{2n- 4}{3}  \leq d(a,x)+ d(b,x)+ d(c,x)\leq 2s_1+s_2+3s_0 +6,
	\end{equation}
	\begin{equation}\label{ax-bx-cx-ab-bc-ac}
	6\, \frac{2n- 4}{3} \leq d(a,x)+ d(b,x)+ d(c,x)+d(a,b)+d(b,c)+d(c,a) \leq 4s_1+4s_2 +3s_0 +9,
	\end{equation}
	Substituting $s_0=n-4-s_1-s_2$ into (\ref{ax-bx-cx}) and  (\ref{ax-bx-cx-ab-bc-ac}) yields that $s_1+2s_2 \le n-2$ and $s_1+s_2  \geq n-5$, respectively. Combining the two inequalities we have just obtained, we get
	\begin{align*}
	s_2 \leq 3 \quad \text{and} \quad s_1 \geq n- 8.
	\end{align*}
	We now show that the weight of $s_1$ splits almost equally between $s_{1,a}$, $s_{1,b}$, $s_{1,c}$. Note that
	\begin{align*}
	\frac{2n- 4}{3} \leq d(b,c)\leq n-3-s_{1,a}, 
	\end{align*}
	from which it follows that $s_{1,a}\leq \frac{n-5}{3}$. Similarly we derive that $s_{1,b,} s_{1,c}\le (n-5)/3$. Consequently
	\begin{align*} 
	s_{1,a}=s_1-s_{1,b}-s_{1,c} &\geq n- 8 -2\frac{n-5}{3} =\frac{n - 14}{3}.
	\end{align*}
	Similarly $s_{1,b}$ and $s_{1,c}$ satisfy the same lower bound. Let $A=\{y\in V\setminus S:\ S_y= S^{1,a}\}\cup \{a\}$, $B=\{y\in V\setminus S:\ S_y= S^{1,b}\}\cup\{b\}$ and $C=\{y\in V\setminus S:\ S_y= S^{1,c}\}\cup \{c\}$.  Set $V'=A\cup B\cup C\cup \{x\}$. Then we have just shown the following lemma.
	\begin{lemma}\label{lemma: A,B,C cover almost everything}
		\[\vert V'\vert= 1+\vert A\vert + \vert B\vert + \vert C\vert \geq n-4, \quad \text{and} \quad 
		\frac{n - 11}{3} \le \vert A\vert, \vert B\vert, \vert C\vert \le \frac{n-2}{3}. \qed
		\]
	\end{lemma}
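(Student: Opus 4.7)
The plan is to treat the lemma as a piece of bookkeeping: since $A$, $B$, $C$ are defined via the classification of neighbourhood sets $S_y$ from Lemma~\ref{lem:Sy}, we have $|A|=s_{1,a}+1$, $|B|=s_{1,b}+1$, $|C|=s_{1,c}+1$, and $|V'|=1+|A|+|B|+|C|=s_1+4$. Thus the stated set-size bounds are equivalent to the estimates $(n-14)/3\le s_{1,j}\le (n-5)/3$ (for $j\in\{a,b,c\}$) and $s_1\ge n-8$, which I would derive from the codegree hypothesis via two independent sums.

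For the upper bounds I would exploit a single codegree, say $d(b,c)\ge (2n-4)/3$. Inspecting the list of sets in Lemma~\ref{lem:Sy}, the pair $bc$ fails to lie in $S_y$ exactly when $S_y\in\{S^{1,a},S^{3}\}$; so every vertex counted by $s_{1,a}$ is a non-neighbour of the pair $bc$. Since there are at most $n-3-d(b,c)\le (n-5)/3$ such non-neighbours outside $\{a,b,c\}$, we get $s_{1,a}\le (n-5)/3$, hence $|A|\le (n-2)/3$. By the obvious symmetry obtained by permuting the roles of $a,b,c$, the same bounds hold for $|B|$ and $|C|$.

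For the lower bounds I would use the two codegree sums from the text: summing $d(a,x)+d(b,x)+d(c,x)$ and bounding each vertex's contribution by the type of $S_y$ (yielding contribution $2,1,3$ for types $1,2,3$ respectively) gives $s_2\le 3$, while summing all six codegrees and using $|S_y|\le 4$ pins down $s_1\ge n-8$. Combined with the upper bounds just established on the other two $s_{1,j}$, this forces $s_{1,a}\ge (n-8)-2(n-5)/3=(n-14)/3$, so $|A|\ge (n-11)/3$; the bound $|V'|=s_1+4\ge n-4$ is then immediate.

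The only delicate point — the ``main obstacle'' in an otherwise routine argument — is keeping straight \emph{which} codegree controls \emph{which} $s_{1,j}$: it is crucial that among the three type-1 configurations $S^{1,a},S^{1,b},S^{1,c}$ only $S^{1,a}$ omits the pair $bc$, so that $d(b,c)$ bounds $s_{1,a}$ (not $s_{1,b}$ or $s_{1,c}$). Once the classification in Lemma~\ref{lem:Sy} is used to disentangle these contributions, everything reduces to the linear-programming bookkeeping above.
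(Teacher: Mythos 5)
Your proposal follows the paper's own route: the same classification into types $s_{i,j}$, the same use of the single codegrees $d(b,c)$, $d(a,c)$, $d(a,b)$ to obtain the upper bounds $s_{1,a}, s_{1,b}, s_{1,c} \le (n-5)/3$, and the same two codegree sums to obtain $s_2 \le 3$ and $s_1 \ge n-8$. The upper-bound argument (every vertex of type $S^{1,a}$ is a non-neighbour of $bc$, of which there are at most $n-3-d(b,c)$ outside $\{a,b,c\}$) is correct and cleanly stated.

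One step in your description of the lower bound, as literally written, would not go through: ``summing all six codegrees and using $|S_y| \le 4$'' gives only the vacuous $4n - 8 \le 4(n-4) + 9 = 4n - 7$. What makes the six-codegree sum informative is the finer observation --- used in the paper's inequality~\eqref{ax-bx-cx-ab-bc-ac} --- that the $s_0$ vertices (those with $S_y$ not \emph{equal} to any $S^{i,j}$, only a proper subset or a subset of $S^3$) satisfy $|S_y| \le 3$, so they contribute at most $3s_0$ rather than $4s_0$. With that refinement the six-codegree sum yields $s_1 + s_2 \ge n-5$; combined with $s_1 + 2s_2 \le n-2$ from the three-codegree sum, subtraction gives $s_2 \le 3$ and substituting back gives $s_1 \ge n-8$. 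Also note that neither sum produces $s_2 \le 3$ or $s_1 \ge n-8$ on its own --- both linear inequalities must be derived and then combined, which is slightly different from how you have allocated the two conclusions.
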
 

	Let $\mathcal{B}$ be the collection of $3$-edges of $G$ of the form $xAA, xBB, xCC$ (the `bad' triples). Let $\mathcal{M}$ be the 
collection of non-edges of $G$ of the form $xAB, xAC, xBC$ (the `missing' triples). Viewing $\mathcal{B}$ and $\mathcal{M}$ as $3$-graphs on $V'$, for two distinct vertices $v_1, v_2\in V'$, we let $d_{\mathcal{B}}(v_1,v_2)$ denote their codegree in $\mathcal{B}$ and $d_{\mathcal{M}}(v_1,v_2)$ their codegree in $\mathcal{M}$.
	
	\begin{claim}\label{claim: all bad vx degrees small}
		For every $v\in V'\setminus \{x\}$, $d_{\mathcal{B}}(v,x) \leq 4$. 	
	\end{claim}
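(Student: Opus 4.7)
\textbf{Proof plan for Claim~\ref{claim: all bad vx degrees small}.}

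The plan is to treat the cases $v\in\{a,b,c\}$ and $v\in(A\cup B\cup C)\setminus\{a,b,c\}$ separately. The first case will be essentially trivial, and the second case will reduce to reapplying the counting argument that produced $s_1\ge n-8$ and $s_2\le 3$, but with a different triangle playing the role of $\{ab,bc,ac\}$.

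First, suppose $v\in\{a,b,c\}$; by symmetry, take $v=a$. Then $d_{\mathcal B}(a,x)$ counts the number of $w\in A\setminus\{a\}$ with $axw\in E(G)$. But for every $w\in A\setminus\{a\}$ we have $S_w=S^{1,a}=\{bx,cx,ab,ac\}$, which does not contain $ax$, so $axw\notin E(G)$. Hence $d_{\mathcal B}(a,x)=0$. The cases $v=b$ and $v=c$ are identical.

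The main case is $v\in A\setminus\{a\}$ (the cases $v\in B\setminus\{b\}$ and $v\in C\setminus\{c\}$ being symmetric). Since $S_v=S^{1,a}$, we have $vbx,vcx\in E(G)$; together with $bcx\in E(G)$, the pairs $\{vb,vc,bc\}$ form a triangle in $G_x$, and since $x$ is in no $K_4$ we get $vbc\notin E(G)$. So Lemma~\ref{lem:Sy} applies to $S'=\{v,b,c,x\}$. The plan is now to rerun the double-counting arguments that established $s_2\le 3$ and $s_1\ge n-8$ in the main proof, but with $S$ replaced by $S'$. Writing $s'_{i,j}$, $s'_i$, $s'_0$ for the analogous quantities, exactly the same inequalities give $s'_1+2s'_2\le n-2$ and $s'_1+s'_2\ge n-5$, hence $s'_2\le 3$, $s'_1\ge n-8$, and in particular
\[
s'_0=n-4-s'_1-s'_2\le 4.
\]

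It remains to observe that every $w\in A\setminus\{v\}$ with $xvw\in E(G)$ is counted by $s'_0$. Such $w$ cannot equal $a$ (since $xva\notin E(G)$), so $w\in A\setminus\{a,v\}$, giving $S_w=S^{1,a}$. Translating to $S'$, this forces $bx,cx\in S'_w$ and $bc\notin S'_w$; combined with $vx\in S'_w$ (because $xvw\in E(G)$), this rules out $S'_w$ being a subset of any of $S'^{1,v},S'^{1,b},S'^{1,c},S'^{2,v},S'^{2,b},S'^{2,c}$, leaving only $S'_w\subseteq S'^3=\{vx,bx,cx\}$; in fact $S'_w=S'^3$. So such $w$ contribute to $s'_0$, and $d_{\mathcal B}(v,x)\le s'_0\le 4$.

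The main obstacle I anticipate is purely bookkeeping: the counting argument must be reproduced carefully for the new set $S'$, with the in-$S'$ contributions to $d(v,x)+d(b,x)+d(c,x)$ and to the sum of all six pair-codegrees matching the constants $+6$ and $+9$ from the original calculation. This is routine because the local structure around the triangle $\{vb,vc,bc\}$ in $G_x$ with $vbc\notin E(G)$ mirrors exactly the structure around $\{ab,bc,ac\}$.
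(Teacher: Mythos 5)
Your proof is correct, and it takes a genuinely different route from the paper's.

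\textbf{What you do.} For $v\in A\setminus\{a\}$ you re-anchor the classification at the new quadruple $S'=\{v,b,c,x\}$ (checking that the hypotheses of Lemma~\ref{lem:Sy} are met because $\{vb,vc,bc\}$ is a triangle in $G_x$ and $vbc\notin E(G)$), rerun the two codegree-sum inequalities to get $s'_0\le 4$, and then observe that each $w$ witnessing $xvw\in\mathcal{B}$ is forced to have $S'_w=S'^3$, hence is counted by $s'_0$. The verification that $\{vx,bx,cx\}\subseteq S'_w$ and $bc\notin S'_w$ rules out the six $S'^{i,j}$ is complete, and the within-$S'$ constants ($+6$ and $+9$) indeed carry over unchanged since $vbx,vcx,bcx\in E(G)$ and $vbc\notin E(G)$ mirror the structure of the original $S$. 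One small inefficiency: you deduce $s'_0\le 4$ from $s'_1\ge n-8$ alone, but the inequality $s'_1+s'_2\ge n-5$ gives $s'_0\le 1$ directly; either suffices for the claim.

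\textbf{What the paper does.} The paper's proof instead bounds the single sum $d(b,v)+d(c,v)+d(x,v)$ from above in one pass: each $a'\in\Gamma_{\mathcal{B}}(x,v)$ contributes exactly $1$ to this sum (since $a'vb,a'vc\notin E(G)$ by the $K_4$-free condition), the vertices $b,c,v$ contribute at most $1$, and all other vertices of $V'$ contribute at most $2$. Combining with the codegree lower bound and $\vert V'\vert\ge n-4$ gives $d_{\mathcal{B}}(v,x)\le n-\vert V'\vert\le 4$. So the paper's argument is a direct, self-contained count over the link pairs of $v$, whereas yours is a modular reuse of the $s_0,s_1,s_2$ machinery for a shifted anchor; both lean on the same underlying structural fact (that a bad $a'$ kills the edges $a'vb$ and $a'vc$), but package it quite differently.
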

	\begin{proof}
		Suppose without loss of generality that $v\in A$.  If $v=a$, then $d_{\mathcal{B}}(v,x)=0$ because $G$ contains no 3-edges of the form $x a A$. We thus assume that $v\ne a$. The bad triples for the pair $(v,x)$ are triples of the form $a'vx$ for $a'\in A\setminus \{a,v\}$. Suppose $a'vx\in \mathcal{B}$. Then since there is no $K_4$ in $G$ containing $x$, and since, by the definition of $A$, $a'bx$, $vbx$, $a'cx$ and $vcx$ are all in $G$, it must be the case that both of $a'vb$ and $a'vc$ are missing from $G$. 
		Further if $c'\in C\cap \Gamma(v,x)$ then all of $c'vx, bvx, c'bx$ are in $G$, whence $bc'v$ is absent from $G$. Similarly for any $b'\in B$, at most one of $b'cv$, $b'xv$ is in $G$. Finally since $bc v\not\in E(G)$, $b$ and $c$ are contained in exactly one of $\Gamma(b, v)$, $\Gamma(c, v)$, and $\Gamma(x, v)$. To summarize, a vertex $y$ in $V'$ can lie in at most two of $\Gamma(b,v)$, $\Gamma(c,v)$ and $\Gamma(x,v)$ unless $y$ is in $\Gamma_{\mathcal{B}}(x,v)$ (and lies in exactly one of those joint neighbourhoods) or is in $\{b,c,v\}$ (and lies in at most one of those joint neighbourhoods).
		Together with our codegree assumption, this gives us
		\begin{align*}
			3\, \frac{2n-4}{3} \leq d(b,v)+ d(c,v)+d(x,v) &\leq 2\vert V'\vert-d_{\mathcal{B}}(v,x) -4+3(n-\vert V'\vert)\\
			&= 3n -\vert V'\vert- 4 -d_{\mathcal{B}}(v,x) \leq 2n - d_{\mathcal{B}}(v,x),	\end{align*}
		where we apply $\vert V'\vert\ge n -4$ from Lemma~\ref{lemma: A,B,C cover almost everything}
 in the last inequality. It follows that $d_{\mathcal{B}}(v,x)\leq 4$, as claimed.		
	\end{proof}
	
	\begin{claim}\label{claim: all missing vx degrees small}
		For every $v\in V'\setminus \{x\}$, $d_{\mathcal{M}}(v,x) \leq 8$. 	
	\end{claim}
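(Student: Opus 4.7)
The approach is to mirror the strategy of Claim~\ref{claim: all bad vx degrees small}: express $d(v,x)$ as a sum over the parts of $V' = A \sqcup B \sqcup C \sqcup \{x\}$ together with the remainder $V \setminus V'$, isolate $d_{\mathcal{M}}(v,x)$, and play the codegree lower bound $(2n-4)/3$ against the upper bounds provided by Claim~\ref{claim: all bad vx degrees small} and Lemma~\ref{lemma: A,B,C cover almost everything}. By symmetry, assume $v \in A$.

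The contribution to $d(v,x)$ from $A \setminus \{v\}$ is exactly $d_{\mathcal{B}}(v,x)$: when $v \ne a$ we have $xva \notin E(G)$ by the definition of $A$, so every $y \in A \setminus \{a,v\}$ with $xvy \in E(G)$ is a bad triple; and when $v = a$ the contribution is $0 = d_{\mathcal{B}}(a,x)$ since $xay \notin E(G)$ for every $y \in A \setminus \{a\}$. The contribution from $B \cup C$ is $|B| + |C| - d_{\mathcal{M}}(v,x)$ straight from the definition of $\mathcal{M}$, because for $v \in A$ and $y \in B \cup C$ the pair $\{v,y\}$ automatically spans two different parts. Finally, the contribution from $V \setminus V'$ is crudely bounded by $|V \setminus V'| \le 4$, using Lemma~\ref{lemma: A,B,C cover almost everything}.

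Combining these three contributions yields
\[
\frac{2n-4}{3} \;\le\; d(v,x) \;\le\; d_{\mathcal{B}}(v,x) + |B| + |C| - d_{\mathcal{M}}(v,x) + 4.
\]
Plugging in $d_{\mathcal{B}}(v,x) \le 4$ from Claim~\ref{claim: all bad vx degrees small} and $|B|, |C| \le (n-2)/3$ from Lemma~\ref{lemma: A,B,C cover almost everything}, the two $(2n-4)/3$ terms cancel and we are left with $d_{\mathcal{M}}(v,x) \le 8$, as desired.

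I do not expect a real obstacle here: once Claim~\ref{claim: all bad vx degrees small} is available, the argument is a short accounting of neighbours. The one subtlety to watch is double counting, since the vertices $b$ and $c$ do contribute to $d(v,x)$ (as $xvb, xvc \in E(G)$ for $v \in A$) and must not be tallied among the missing triples; the partition above handles this automatically because $d_{\mathcal{M}}(v,x)$ counts only $y \in B \cup C$ with $xvy \notin E(G)$.
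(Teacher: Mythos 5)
Your proof is correct and follows essentially the same route as the paper: you bound the neighbourhood of $vx$ by splitting into the contributions from $A$, from $B\cup C$, and from $V\setminus V'$, then apply Claim~\ref{claim: all bad vx degrees small} and Lemma~\ref{lemma: A,B,C cover almost everything}. The paper writes the non-$A$ part as $n-1-\vert A\vert$ and lower-bounds $\vert A\vert$, whereas you bound $\vert B\vert+\vert C\vert$ and $\vert V\setminus V'\vert$ separately, but these yield the identical estimate $\frac{2n+8}{3}$ and hence the same conclusion.
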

	\begin{proof}
		Suppose without loss of generality that $v\in A$. Then by the codegree assumption, Claim~\ref{claim: all bad vx degrees small} and the bound on $\vert A\vert$ from Lemma~\ref{lemma: A,B,C cover almost everything} we have
		\begin{align*}
		\frac{2n- 4}{3} \leq d(v,x)&\leq n-1-\vert A\vert +d_{\mathcal{B}}(v,x)-d_{\mathcal{M}}(v,x)
		\leq n-1 - \frac{n-11}3 +4 - d_{\mathcal{M}}(v,x),
		\end{align*}
		which gives that $d_{\mathcal{M}}(v,x)\leq 8$ as claimed.
	\end{proof}
	
	\begin{claim}\label{claim: no bad vx triples}
		For every $y \in V(G)\setminus \{x\}$, $\Gamma(y,x)$ has a non-empty intersection with at most two of the parts $A$, $B$ and $C$.
	\end{claim}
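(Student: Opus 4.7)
The plan is to argue by contradiction. Suppose some $y \in V(G) \setminus \{x\}$ has $\Gamma(y,x)$ meeting each of the three parts $A$, $B$, $C$, with witnesses $a^\prime \in A \cap \Gamma(y,x)$, $b^\prime \in B \cap \Gamma(y,x)$, $c^\prime \in C \cap \Gamma(y,x)$. The aim is to produce a copy of $K_4$ containing $x$, contradicting the standing hypothesis on $x$. The central tool is that a triangle $\{p,q,r\}$ of the link graph $G_x$ forces $pqr \notin E(G)$ (else $\{x,p,q,r\}$ is a $K_4$); conversely, any triangle of $G_x$ whose corresponding $3$-edge lies in $E(G)$ would give the required contradiction.

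I would split the argument into three cases by where $y$ sits. If $y \in \{a,b,c\}$, say $y = a$, then for every $a^{\prime\prime} \in A \setminus \{a\}$ the defining set $S_{a^{\prime\prime}} = S^{1,a}$ excludes $ax$, so $aa^{\prime\prime}x \notin E(G)$ and $\Gamma(a,x) \cap A = \emptyset$, already contradicting the assumption. The main case is $y \in (A \cup B \cup C) \setminus \{a,b,c\}$; by symmetry take $y \in A \setminus \{a\}$, so that $b, c \in \Gamma(y,x)$ hold automatically and the assumption supplies some $a^{\prime\prime} \in A \cap \Gamma(y,x) \setminus \{a, y\}$. I would then bound $d(y, a^{\prime\prime})$ from above as follows. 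The triangles $\{y, a^{\prime\prime}, b\}$ and $\{y, a^{\prime\prime}, c\}$ of $G_x$ give $b, c \notin \Gamma(y, a^{\prime\prime})$, and for every $z \in (B \cup C) \setminus \{b, c\}$ with both $yz \in G_x$ and $a^{\prime\prime} z \in G_x$ the triangle $\{y, a^{\prime\prime}, z\}$ of $G_x$ forces $ya^{\prime\prime}z \notin E(G)$, excluding $z$ from $\Gamma(y, a^{\prime\prime})$. Combined with the bounds $d_{\mathcal M}(y,x), d_{\mathcal M}(a^{\prime\prime},x) \leq 8$ from Claim~\ref{claim: all missing vx degrees small}, this gives
\[
d(y, a^{\prime\prime}) \;\leq\; n + 14 - |B| - |C|.
\]
Using the estimate $|B| + |C| \geq (2n - 13)/3$ that follows from $|A| \leq (n-2)/3$ and $|A| + |B| + |C| \geq n - 5$ in Lemma~\ref{lemma: A,B,C cover almost everything}, together with the codegree lower bound $d(y, a^{\prime\prime}) \geq (2n - 4)/3$, I obtain $n \leq 59$, contradicting $n > 98$.

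The remaining case $y \in V \setminus V^\prime$ concerns at most four exceptional vertices, and will be the main obstacle: for such $y$ the pattern $S_y$ is none of the seven shapes $S^{1,\bullet}, S^{2,\bullet}, S^3$ and has at most three elements, so we cannot simply read off which of $ya, yb, yc$ lie in $G_x$. I would handle this by a parallel codegree analysis on the pair $(y, a^\prime)$, applying Claims~\ref{claim: all bad vx degrees small} and~\ref{claim: all missing vx degrees small} only on the $a^\prime$-side (since $a^\prime \in V^\prime$) while enumerating the small number of shapes $S_y$ can take; in each subcase one extracts enough triangles of $G_x$ through $y$ to either derive a contradiction with the codegree assumption or exhibit a $K_4$ at $x$ directly.
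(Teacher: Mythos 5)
The proposal correctly reduces to a contradiction with the codegree bound in the cases $y \in \{a,b,c\}$ and $y \in (A\cup B\cup C)\setminus\{a,b,c\}$, and your arithmetic for the latter case (the bound $d(y,a'') \le n+14 - |B|-|C|$, combined with $|B|+|C| \ge (2n-13)/3$ and the codegree floor $(2n-4)/3$, giving $n \le 59$) checks out. However, the case $y \in V\setminus V'$ is a genuine gap: you explicitly flag it as ``the main obstacle'' and offer only a sketch (enumerate shapes of $S_y$, run a parallel analysis). That sketch is not carried out, and it points in a direction that adds unnecessary complexity: the shape of $S_y$ is not actually needed anywhere.

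The paper's argument avoids the case split entirely and is worth contrasting with yours. For \emph{any} $y$ with $A_y, B_y, C_y$ all non-empty, it fixes a witness $a' \in A_y$ and observes that every $z \in (B_y\cup C_y)\cap \Gamma(a',x)$ is a non-neighbour of the pair $\{a',y\}$ (since $\{a',z,x,y\}$ would otherwise span a $K_4$); combined with $d_{\mathcal{M}}(a',x)\le 8$ this yields $|B_y|+|C_y| \le (n+22)/3$. The key point is that the triangles in $G_x$ are built through $a'$ (which always lies in $V'$, so Claims~\ref{claim: all bad vx degrees small} and~\ref{claim: all missing vx degrees small} apply), not through $y$, so no information about $S_y$ is required. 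Repeating from $b'\in B_y$ and $c'\in C_y$, summing the three resulting inequalities, and inserting the total into the $d(x,y)$ codegree bound gives $n \le 98$, a single unified contradiction. Your own approach in the middle case exploits that $y \in A$ carries the full pattern $S^{1,a}$ (so both $y$- and $a''$-sides are controlled), and that crutch disappears precisely in the case you left open. Replacing the case split with the paper's one-sided, witness-based argument would close the gap and also subsume your first two cases.
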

	\begin{proof}
		Let $y \in V(G)\setminus \{x\}$. Set $A_y=A\cap\Gamma(x,y)$, $B_y=B\cap \Gamma(x,y)$ and $C_y=C\cap 	\Gamma(x,y)$.  Suppose none of $A_y$, $B_y$, $C_y$ is empty. Fix $a'\in A_y$. For $b'\in B_y$, if $b'\in 		\Gamma(a', x)$, then $a' b' y\not\in E(G)$ -- otherwise $\{a', b', x, y\}$ spans a copy of $K_4$. Similarly, for $c'\in 	C_y\cap \Gamma(a', x)$, we have $a' c' y\not\in E(G)$. Hence, 
		\begin{align*}
		\frac{2n-4}{3} \leq d(a',y) &\leq  n-2 - | B_y \cap \Gamma(a', x)| - | C_y \cap \Gamma(a', x) |.
		\end{align*}		
	Claim~\ref{claim: all missing vx degrees small} gives that $d_{\mathcal{M}}(a',x)\leq 8$.  Consequently,
		\[
		| B_y \cap \Gamma(a', x)| + | C_y \cap \Gamma(a', x) | = |B_y| + |C_y| - d_{\mathcal{M}}(a',x) \ge |B_y| + |C_y| - 8
		\]
		This 	implies that	
		\[
		\frac{2n-4}{3} \leq  n-2 -\vert B_y\vert - \vert C_y\vert + 8, 
		\]
		which  yields $\vert B_y\vert +\vert C_y\vert \leq (n+22)/{3}$. Similarly by considering any vertex $b'\in B_y$ and any vertex $c'\in C_y$ we obtain that 
		\begin{align*}
		\vert A_y \vert +\vert C_y\vert \leq \frac{n+22}{3} \quad  \text{and} \quad \vert A_y \vert +\vert B_y\vert \leq \frac{n+22}{3}.
		\end{align*}
		Summing these three inequalities and dividing by $2$, we obtain that
		\[
		\vert A_y \vert + \vert B_y\vert + \vert C_y \vert \leq \frac{n+22}{2}.
		\]
		Furthermore, by the codegree condition, 
		\[
		\frac{2n-4}{3} \leq d(x,y) \leq \vert A_y\vert+\vert B_y\vert + \vert C_y\vert  + \left(n-\vert V'\vert\right)
		\leq \frac{n+22}{2} +4,
		\]
		where we apply $\vert V'\vert\ge n -4$ from Lemma~\ref{lemma: A,B,C cover almost everything}. Rearranging terms yields $\frac{n}6 \le \frac{49}3$, which contradicts our assumption $n> 98$. 
	\end{proof}
	Set $V_1= \{y \in V\setminus \{x\}: \ \Gamma(x,y)\cap A=\emptyset \}$, $V_2=\{y \in V\setminus \{x\}: \ \Gamma(x,y)\cap B=\emptyset \}$ and $V_3 =\{y \in V\setminus \{x\}: \ \Gamma(x,y)\cap C=\emptyset \}$. 
	Without loss of generality, assume that 
	\begin{equation} \label{eq:Vi}
	|V_1| \le |V_2|\le |V_3|.
	\end{equation}
Claim~\ref{claim: no bad vx triples} shows that $V_1\cup V_2\cup V_3$ covers $V(G)\setminus \{x\}$. We now show that in fact $V_1, V_2, V_3$ are pairwise disjoint, and $A\subseteq V_1$, $B\subseteq V_2$, and $C\subseteq V_3$.  
Suppose instead, that there exists $y\in V_1\cap V_2$. Then $\Gamma(x,y)\cap (A\cup B) =\emptyset$. By the codegree condition and Lemma~\ref{lemma: A,B,C cover almost everything}, 
\[
\frac{2n-4}{3} \leq d(x,y) \leq \vert C_y\vert  + \left(n-\vert V'\vert\right) \le \frac{n-2}{3} + 4,
\]	
	which implies that $n\le 14$, a contradiction.
	 
	Furthermore, consider $a'\in A$. By Claim~\ref{claim: all missing vx degrees small}, $a'xv\in E(G)$ for all but at most $8$ vertices $v\in B\cup C$. By Lemma~\ref{lemma: A,B,C cover almost everything},
	\begin{align*}
	\vert B \vert -8 \geq \frac{n-11}3 -8 > 0
	\end{align*}
	which is strictly positive as $n>35$. Thus we have that $\Gamma(a',x)$ has a non-empty intersection with $B$; similarly we have that $\Gamma(a',x)\cap C\neq \emptyset$, from which we can finally deduce by Claim~\ref{claim: no bad vx triples} that $\Gamma(a',x)\cap A=\emptyset$ and that $A\subseteq V_1$. Similarly we have $B\subseteq V_2$ and $C\subseteq V_3$. 
	
	Let $c' \in C$. By the definition of $V_3$, we have $\Gamma(c', x)\subseteq V_1\cup V_2$. By the codegree assumption, it follows that
	\begin{align} \label{eq:2m}
	\frac{2n-4}{3} \leq d(c', x)& \leq \vert V_1\vert +\vert V_2\vert = n-1 -\vert V_3\vert, 
	\end{align}
	from which we get that $\vert V_3\vert \leq (n+1)/3 $. Since $n=3m+2$, by \eqref{eq:Vi}, we derive that $|V_3|= (n+1)/3= m+1$ and $|V_1|\le |V_2|\le (n+1)/3$.
	
	\begin{claim}\label{claim: all missing x degrees small}
		Let $y\in V_i$. Then $\Gamma(y, x)$ contains all but at most $6$  vertices from $\bigcup_{j\ne i} V_j$ and no vertex from $V_{i}$. 
	\end{claim}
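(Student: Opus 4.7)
The plan is to prove the two conclusions of the claim separately, with the second (the empty intersection with $V_i$) being the more delicate part requiring a $K_4$-avoidance argument. Both parts exploit the codegree lower bound $d(x,\cdot) \ge (2n-4)/3 = 2m$, the definition of $V_i$, and the size estimates $|V_1|,|V_2|,|V_3| \in [m-1, m+1]$ (which follow from $|V_3| = m+1$ together with $|V_1|\le|V_2|\le|V_3|$ and $|V_1|+|V_2|+|V_3|=3m+1$).

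\textbf{Step 1: at most $6$ missing vertices in $V_j \cup V_k$.} Fix $y \in V_i$ and let $X_i \in \{A,B,C\}$ denote the associated part (so $X_1=A$, $X_2=B$, $X_3=C$). By the definition of $V_i$, $\Gamma(y,x)$ does not meet $X_i$. Since $V_i \setminus X_i \subseteq V \setminus V'$ has size at most $4$ by Lemma~\ref{lemma: A,B,C cover almost everything}, we get $|\Gamma(y,x) \cap V_i| \le 4$. Combined with $|\Gamma(y,x)| \ge 2m$, this gives $|\Gamma(y,x) \cap (V_j \cup V_k)| \ge 2m-4$. As $|V_j \cup V_k| = 3m+1-|V_i| \le 2m+2$, at most $6$ vertices of $V_j\cup V_k$ lie outside $\Gamma(y,x)$.

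\textbf{Step 2: no neighbour of $(x,y)$ inside $V_i$.} Suppose for contradiction that some $y' \in V_i \setminus \{y\}$ satisfies $xyy' \in E(G)$. Set $U = V_j \cup V_k$. Applying Step 1 to $y$ and $y'$ separately gives $|\Gamma(x,y) \cap U|,\ |\Gamma(x,y') \cap U| \ge |U|-6$. Furthermore, $\Gamma(y,y')$ has at least $2m$ elements, of which at most $|V_i|-2$ lie in $V_i \setminus \{y,y'\}$ and at most $1$ is $x$, so $|\Gamma(y,y') \cap U| \ge 2m - |V_i|+1$. Two applications of inclusion-exclusion within the universe $U$ then yield
\[
|\Gamma(x,y) \cap \Gamma(x,y') \cap \Gamma(y,y') \cap U| \ge 2(|U|-6) + (2m-|V_i|+1) - 2|U| = 2m - |V_i| - 11 \ge m - 12,
\]
using $|V_i| \le m+1$. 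Since $n > 98$ forces $m \ge 33$, this is strictly positive; any witness $z$ gives $\{x,y,y',z\} \cong K_4$ covering $x$, contradicting the standing hypothesis.

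The main obstacle is the passage from $|\Gamma(y,x) \cap V_i| \le 4$ (immediate from the size of $V \setminus V'$) to the stronger equality $=0$. The $K_4$-avoidance argument closes this gap, but it requires invoking the "at most $6$ missing" bound twice and using the constraint $n > 98$ so that the triple-intersection count stays strictly positive.
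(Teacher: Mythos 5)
Your proof is correct and takes essentially the same route as the paper: Step~1 in both cases combines the fact that $\Gamma(y,x)$ misses all of $A$ (respectively $B$, $C$) with the bound $|V\setminus V'|\le 4$ and $|V_i|\le m+1$ to get the "at most $6$ missing" conclusion, and Step~2 in both cases derives a contradiction from $xyy'\in E(G)$ using the codegree bound together with Step~1 — the paper phrases this as $\Gamma(y,y')\cap\Gamma(y,x)\cap\Gamma(y',x)=\emptyset$ forcing $d(y,y')$ too small, whereas you run the inclusion--exclusion the other way to exhibit a common neighbour $z$ giving a $K_4$; these are logically the same argument presented as duals of each other.
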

	\begin{proof}
		Suppose without loss of generality that $y\in V_1$. Then by Claim~\ref{claim: no bad vx triples}, $A\cap \Gamma(y,x)=\emptyset$. Thus
		\begin{align*}
		\frac{2n - 4}{3} \leq d(x,y)\leq \vert \Gamma(x,y)\cap \left(V_2\cup V_3\right)\vert + \vert \Gamma(x,y) \cap \left(V_1\setminus A\right)\vert
		\leq \vert \Gamma(x,y)\cap \left(V_2\cup V_3\right)\vert + 4
		\end{align*}
		because $\vert V_1\setminus A \vert \le n - \vert V'\vert\le 4$ by Lemma~\ref{lemma: A,B,C cover almost everything}. Hence $\vert \Gamma(x,y)\cap \left(V_2\cup V_3\right)\vert \ge (2n - 16)/3$. Since $|V_i|\le (n+1)/3$ for all $i$,
		\[
		| \left(V_2\cup V_3\right) \setminus \Gamma(x,y) | \le 2\,\frac{n+1}3 - \frac{2n-16}3 =6.
		\]
		This establishes the first part of our claim. 
				
		For the second part of our claim (namely, $\Gamma(y,x)\cap V_1=\emptyset$), suppose that  $yy'x\in E(G)$ for some $y'\in V_1$. Then $\Gamma(y, y') \cap \Gamma(y,x)\cap \Gamma(y',x) = \emptyset$. Consequently,
		\begin{align*}
		\frac{2n - 4}{3} \leq d(y, y') \leq 1 + \vert V_1\vert-2 + \left\vert\left( V_2\cup V_3\right)\setminus\left(\Gamma(y,x)\cap \Gamma(y',x)\right)\right\vert \leq 1+ \frac{n+1}{3} -2+ 2 \cdot 6 
		\end{align*}
		where in the last inequality we apply $|V_1| \le (n+1)/3 $ and the first part of the claim. This implies that $n\le 38$, a contradiction.
	\end{proof} 

Claim~\ref{claim: all missing x degrees small} implies that $\Gamma(v_3, x)\subseteq V_1\cup V_2$ for all $v_3\in V_3$. Then $d(v_3, v)$ satisfies \eqref{eq:2m} with two inequalities replaced by equalities. Consequently
all triples of the form $xv v_3$ with $v_3\in V_3$ and $v\in V_1\cup V_2$ are in $E(G)$

Claim~\ref{claim: all missing x degrees small} also implies that most $v_1\in V_1$ and $v_2\in V_2$ satisfy $xv_1v_2\in E(G)$. Fix such $v_1$ and $v_2$. Then $v_1v_2v_3\not\in E(G)$ for any $v_3\in V_3$ otherwise $xv_1v_2v_3$ induces a copy of $K_4$. We thus have 
	\[ 2m\le d(v_1,v_2)\leq \vert V_1 \vert +\vert V_2\vert -1= 2m-1, 
	\]
	a contradiction. This completes the proof of Theorem~\ref{theorem: K4 threshold} in the case $n=3m+2$.
\end{proof}
	
\subsection{Other extremal constructions and stability}
\label{subsection: K4 extremal constructions}

Recall the construction $F_1(n)$ described in Section~\ref{subsection: k4 covering codegree density}. 
There are other extremal families of $3$-graphs for $K_4$-covering that are not isomorphic to subgraphs of $F_1(n)$ . 

%

\noindent\textbf{Case 1: $n=3m$.}  We partition $[n]\setminus \{x\}$ into three parts $V_1$, $V_2$ and $V_3$ with sizes $\vert V_1\vert= m-1$ and $\vert V_2\vert=\vert V_3\vert=m$. A collection $\mathcal{E}$ of pairs of vertices from different parts of $[n]\setminus\{x\}$ is called \emph{admissible} if (i) every vertex $v_1\in V_1$ is contained in at most two pairs from $\mathcal{E}$, and (ii) every vertex $v \in V_2 \sqcup V_3$ is contained in at most one pair from $\mathcal{E}$. 
Now let $F_1(\mathcal{E}, 3m)$ be the $3$-graph obtained from $F_1$ by deleting all triples $xuv$ and adding all tripartite triples $uvw$ (namely, $w\in V\setminus \{x\}$ is from the part different from the ones containing $u$ or $v$) for all $uv \in \mathcal{E}$. It is easy to see that $F_1(\mathcal{E}, 3m)$ contains no $K_4$ covering $x$ and $\delta_2(F_1(\mathcal{E}, 3m))=\delta_2(F_1(3m))=2m-2$.

\noindent\textbf{Case 2: $n=3m+1$.}  We partition $[n]\setminus \{x\}$ into three parts $V_1$, $V_2$ and $V_3$ with sizes $\vert V_1\vert= \vert V_2\vert=\vert V_3\vert=m$. A collection $\mathcal{E}$ of pairs of vertices from different parts of $[n]\setminus\{x\}$ is called \emph{admissible} if every vertex is contained in at most one pair from $\mathcal{E}$. Now let $F_1(\mathcal{E}, 3m+1)$ be the $3$-graph obtained from $F_1$ by deleting all triples $xuv$ and adding all tripartite triples $uvw$ for all $uv \in \mathcal{E}$. It is easy ti see that $F_1(\mathcal{E}, 3m+1)$  contains no $K_4$ covering $x$ and $\delta_2(F_1(\mathcal{E}, 3m+1))=\delta_2(F_1(3m+1))=2m-1$.

\noindent\textbf{Case 3: $n=3m+2$.}  We partition $[n]\setminus \{x\}$ into three parts $V_1$, $V_2$ and $V_3$ with sizes $\vert V_1\vert= \vert V_2\vert=m$ and $\vert V_3\vert=m+1$. A collection $\mathcal{E}$ of pairs of vertices from different parts of $[n]\setminus\{x\}$ is called \emph{admissible} if (i) every vertex $v\in V_1\sqcup V_2$ is contained in at most $2$ pairs from $\mathcal{E}$ and (ii) every vertex $v_3\in V_3$ is contained in at most $1$ pair from $\mathcal{E}$. Now let $F_1(\mathcal{E}, 3m+2)$ be the $3$-graph obtained from $F_1$ by deleting all triples $xuv$ and adding all tripartite triples $uvw$ for all $uv \in \mathcal{E}$. It is easy to see that $F_1(\mathcal{E}, 3m+2)$ contains no $K_4$ covering $x$ and $\delta_2(F_1(\mathcal{E}, 3m+2))=\delta_2(F_1(3m+2))=2m-1$.

There is yet another extremal construction. Partition $[n]\setminus \{x\}$ into three parts $V_1$, $V_2$ and $V_3$ with sizes $\vert V_1\vert=m-1$ and  $\vert V_2\vert=\vert V_3\vert=m+1$. In this context, a collection $\mathcal{E}$ of pairs of vertices from different parts of $[n]\setminus\{x\}$ is called \emph{admissible} if (i) every vertex $v_1\in V_1$ is contained in at most $3$ pairs from $\mathcal{E}$ and (ii) every vertex $v\in V_2\sqcup V_3$ is contained in at most $1$ pair from $\mathcal{E}$. Let $F'_1$ be the $3$-graph on $[n]$ consisting of all triples $xuv$, where $u, v$ come from different parts, and all triples of $[n]\setminus \{x\}$ that are not tripartite. Now let $F'_1(\mathcal{E}, 3m+2)$ be the $3$-graph obtained from $F'_1$ by deleting all triples $xuv$ and adding all tripartite triples $uvw$ for all $uv \in \mathcal{E}$. It is easy to see that $F'_1(\mathcal{E}, 3m+2)$ contains no $K_4$ covering $x$ and $\delta_2(F'_1(\mathcal{E}, 3m+2))=\delta_2(F_1(3m+2))=2m-1$.

We can show that the above constructions are \emph{all} extremal configurations for $n$ sufficiently large ($n\geq 999$). This can be done by first proving the following stability theorem. 
\begin{theorem}[Stability]\label{theorem: K4 stability}
	Suppose $n\geq 4$ and $0<\delta \leq\frac{1}{429}$. Suppose that $G$ is a $3$-graph on $n$ vertices with minimum codegree $\delta_2(G)\geq \left(\frac{2}{3}-\delta\right)n$ and that there is a vertex $x\in V(G)$ not contained in any copy of $K_4$ in $G$. Then there exists a tripartition $V_1\sqcup V_2\sqcup V_3$ of $V(G)\setminus \{x\}$ such that the following holds for all $i\in[3]$ and $j\ne i$:
	\begin{enumerate}[(i)]
		\item there is no triple in $G$ of the form $xV_iV_i$;
		\item all but at most $9\delta n^2$ triples of the form $xV_iV_{j}$ are in $G$;
		\item there are at most $4\delta n^3$ triples in $G$ of the form $V_1V_2V_3$;
		\item all but at most $6\delta n^3$ triples of the form $V_iV_iV_{j}$ are in $G$;
		\item $\left\vert \vert V_i\vert -\frac{n-1}{3}\right\vert\leq 2 \delta n$.
	\end{enumerate}
\end{theorem}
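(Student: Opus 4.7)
The plan is to carry out a quantitative version of the proof of the upper bound when $n \equiv 2 \pmod 3$ in Section~\ref{sec:K4upper}, propagating error terms of size $O(\delta n)$ at each step. Because $\delta \le 1/429$, we have $(2/3 - \delta)n > (n-1)/2$, so the link graph $G_x$ contains a triangle $\{ab, bc, ac\}$, and the $K_4$-freeness of $x$ forces $abc \notin E(G)$. Setting $S = \{a, b, c, x\}$, Lemma~\ref{lem:Sy} classifies each $y \in V \setminus S$ into one of the seven types $S^{1,j}, S^{2,j}, S^3$.

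Repeating the double-counting of $d(a,x)+d(b,x)+d(c,x)$ and $d(a,b)+d(b,c)+d(c,a)$ with the codegree hypothesis $\delta_2(G) \ge (2/3 - \delta)n$ in place of $(2n-4)/3$, I would obtain $s_1 \ge (1 - O(\delta))n$ and $s_2 \le O(\delta n)$; applying $d(b,c) \le n - 3 - s_{1,a}$ together with symmetry then yields $|s_{1,j} - n/3| \le O(\delta n)$ for each $j \in \{a,b,c\}$. Setting $A = \{y : S_y = S^{1,a}\} \cup \{a\}$ (and $B$, $C$ analogously), we have $|A|, |B|, |C| = n/3 + O(\delta n)$ and $|V'| := |A \cup B \cup C \cup \{x\}| \ge n - O(\delta n)$. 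The approximate versions of Claims~\ref{claim: all bad vx degrees small}, \ref{claim: all missing vx degrees small}, and \ref{claim: no bad vx triples} then go through essentially verbatim, giving $d_{\mathcal{B}}(v,x), d_{\mathcal{M}}(v,x) = O(\delta n)$ for every $v \in V' \setminus \{x\}$, and that $\Gamma(y,x)$ meets at most two of $A$, $B$, $C$ for every $y \in V \setminus \{x\}$.

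Next, define $V_i = \{y \in V \setminus \{x\} : \Gamma(x,y) \cap X_i = \emptyset\}$ with $X_1 = A$, $X_2 = B$, $X_3 = C$, breaking ties so that the $V_i$ partition $V \setminus \{x\}$; relabeling if necessary so $|V_1| \le |V_2| \le |V_3|$, the codegree computation $\delta_2(G) \le d(c', x) \le |V_1| + |V_2|$ for $c' \in C$ forces $|V_3| \le n/3 + O(\delta n)$, hence $|V_i| = n/3 + O(\delta n)$ for all $i$, establishing (v). Property~(i) is the main obstacle, since the definition of $V_i$ does not directly forbid edges of $G_x$ inside a single part. To rule these out, I would argue that if $u, v \in V_i$ satisfy $xuv \in E(G)$, then for all but $O(\delta n)$ vertices $y$ in any other part $V_j$ one has $xuy, xvy \in E(G)$, forcing $uvy \notin E(G)$ lest $\{x, u, v, y\}$ span a $K_4$; this would give $d(u,v) \le n/3 + O(\delta n)$, contradicting the codegree hypothesis when $\delta$ is sufficiently small.

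Properties (ii)--(iv) then follow by routine counting. For (ii), sum $d_{\mathcal{M}}(v, x)$ over $v$ and use that $A \subseteq V_1$, $B \subseteq V_2$, $C \subseteq V_3$ up to $O(\delta n)$ exceptional vertices per part. For (iii), each triple $v_1v_2v_3 \in E(G)$ with $v_i \in V_i$ would yield a $K_4$ on $\{x, v_1, v_2, v_3\}$ unless at least one of $xv_iv_j$ is missing from $G$; by (ii) the number of such missing pairs is $O(\delta n^2)$, so the tripartite triples in $E(G)$ number $O(\delta n^3)$. Property~(iv) then follows from (iii) combined with a codegree-counting computation inside each $V_i \cup V_j$. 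The remaining work consists of matching the precise constants $9\delta n^2$, $4\delta n^3$, $6\delta n^3$, and $2\delta n$ stated in the conclusion; this is tedious bookkeeping but introduces no new ideas.
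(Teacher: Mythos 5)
Your proposal follows essentially the same route as the paper's own proof: the same classification via Lemma~\ref{lem:Sy}, the same construction of $A$, $B$, $C$ and $V'$, the quantitative versions of Claims~\ref{claim: all bad vx degrees small}--\ref{claim: no bad vx triples} with $O(\delta n)$ error terms, the same definition of the $V_i$, and the same counting strategy for parts (ii)--(iv). Your argument for part (i) --- take $u, v \in V_i$ with $xuv \in E(G)$, note that most $y$ in the other parts satisfy $xuy, xvy \in E(G)$, deduce $uvy \notin E(G)$, and conclude $d(u,v)$ is too small --- is exactly the second part of the paper's Claim~\ref{newclaim: missing x degrees small, bad x degree zero}, phrased in complementary language. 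The only cosmetic difference is that you allow for arbitrary tie-breaking when a vertex could fall into more than one $V_i$, whereas the paper shows directly (via the codegree bound applied to such a vertex, using $\vert C_y\vert + (n - \vert V'\vert) < \delta_2(G)$) that the three sets are automatically disjoint; your version would still work, though the paper's observation simplifies the bookkeeping later.
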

\begin{theorem}\label{theorem: extremal configurations}
\begin{itemize}
\item For  $n\equiv 0 \mod 3$ with $n\geq 858$, the extremal configurations for $c_2(n, K_4)$ are isomorphic to a subgraph of $F_1(\mathcal{E}, n)$ for some admissible $\mathcal{E}$.
\item For $n \equiv 1 \mod 3$ with $n \geq 715$, the extremal configurations for $c_2(n, K_4)$ are isomorphic to a subgraph of $F_1(\mathcal{E}, n)$ for some admissible $\mathcal{E}$.
\item For $n \equiv 2 \mod 3$ with $n \geq 1001$, the extremal configurations for $c_2(n, K_4)$ are isomorphic to a subgraph of $F_1(\mathcal{E}, n)$ or to a subgraph of $F_1'(\mathcal{E}, n)$ for some admissible $\mathcal{E}$.
\end{itemize}
\end{theorem}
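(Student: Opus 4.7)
The plan is to bootstrap from the approximate structural description in Theorem~\ref{theorem: K4 stability} to the exact characterisation claimed, via a sequence of local refinements. Let $G$ be an extremal $n$-vertex $3$-graph, so $\delta_2(G)=\lfloor (2n-5)/3\rfloor$ and some $x\in V(G)$ lies in no copy of $K_4$. The thresholds $n\ge 858, 715, 1001$ in the three congruence classes are exactly what is required for the hypothesis $\delta_2(G)\ge (2/3 - 1/429)n$ of Theorem~\ref{theorem: K4 stability} to hold (the floor function can shave off up to $2/3$); applying that theorem with $\delta=1/429$ yields a tripartition $V(G)\setminus\{x\} = V_1\sqcup V_2\sqcup V_3$ satisfying properties (i)--(v), with near-balanced parts and with the $F_1(n)$ structure holding up to $O(\delta n^2)$ errors at $x$ and $O(\delta n^3)$ errors elsewhere.

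The first refinement is to upgrade property~(i) to a genuinely empty intra-part link: no $xuu'\in E(G)$ with $u,u'\in V_i$. Such an edge would, via property~(ii), admit almost all $v\in V_j$ ($j\ne i$) as common neighbours of $ux$ and $u'x$ in $G_x$; to avoid $K_4$ at $x$, each such $v$ must fail $uu'v\in E(G)$, so $d(u,u')\le (|V_i|-2)+O(\delta n)\ll \lfloor (2n-5)/3\rfloor$, a contradiction. Define
\[
\mathcal{E} := \{uv : u\in V_i,\ v\in V_j,\ i\ne j,\ xuv\notin E(G)\}.
\]
The same $K_4$-avoidance yields that any tripartite triple $v_1v_2v_3\in E(G)$ (with $v_i\in V_i$) must contain at least one pair of $\mathcal{E}$. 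Together with the now-empty intra-part $x$-link and the fact that any non-tripartite, $x$-free triple is automatically an edge of $F_1$ and $F_1'$, this already shows $G$ embeds as a subgraph of whichever base construction matches the eventual partition sizes.

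To pin down these sizes, pick a ``clean'' cross pair $u\in V_i$, $v\in V_j$ avoided by every pair of $\mathcal{E}$, which exists because $|\mathcal{E}|=O(\delta n^2)$. Cleanness forces $uv\notin\mathcal{E}$ and rules out any tripartite $uvw\in E(G)$ (otherwise $uw$ or $vw$ would be in $\mathcal{E}$, violating cleanness), hence $d(u,v)=|V_i|+|V_j|-1$, and the codegree condition gives $|V_k|\le n-2-\lfloor(2n-5)/3\rfloor$. Coupled with $|V_1|+|V_2|+|V_3|=n-1$ and property~(v), a short case analysis on $n\bmod 3$ forces the partition to be $(m-1,m,m)$ when $n=3m$; $(m,m,m)$ when $n=3m+1$; and either $(m,m,m+1)$ or $(m-1,m+1,m+1)$ when $n=3m+2$, corresponding to the $F_1$ and $F_1'$ base graphs respectively. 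Finally, the fully tripartite link produces the identity $d(x,v)=|V_j|+|V_k|-|\mathcal{E}(v)|$ for $v\in V_i$; substituting the exact sizes and enforcing $d(x,v)\ge\lfloor(2n-5)/3\rfloor$ immediately yields the admissibility bounds on $|\mathcal{E}(v)|$ stated in each case.

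The main obstacle will be the $n\equiv 2\pmod 3$ case, where two partition patterns survive and mixed or shifted patterns (such as $(m-1,m,m+2)$) must be ruled out via property~(v); this is also where the strictest threshold $n\ge 1001$ is needed, since the $O(\delta n)$ slack from stability has to be small enough to distinguish the admissible patterns from the inadmissible ones.
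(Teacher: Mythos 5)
Your overall plan is the same as the paper's: apply Theorem~\ref{theorem: K4 stability} with $\delta$ small enough that the lower bound $\lfloor(2n-5)/3\rfloor \ge (2/3-\delta)n$ holds (your explanation of where the three thresholds $858,715,1001$ come from is correct), then use conditions~(i)--(ii) to pin down the part sizes exactly and deduce the admissibility constraints. Two remarks before the substantive issue. First, there is no ``first refinement'' to be done: condition~(i) of Theorem~\ref{theorem: K4 stability} already asserts that there are \emph{no} triples of the form $xV_iV_i$, not merely few, so this step is vacuous (you appear to have misread (i) as an approximate statement). Second, your invocation of property~(v) is redundant: the upper bound $|V_k|\le n-2-\lfloor(2n-5)/3\rfloor$ for every $k$, together with $\sum_k|V_k|=n-1$, already pins down the admissible partition patterns by elementary counting, with no appeal to (v) at all; and in any case (v) only gives $||V_i|-(n-1)/3|\le 2\delta n$, which is far too weak to distinguish adjacent patterns when $\delta$ is a fixed constant.

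The genuine gap is the existence of a ``clean'' cross pair, and it is not repairable as stated. You want $u\in V_i$, $v\in V_j$ such that neither $u$ nor $v$ is incident to any pair of $\mathcal{E}$, and you justify this by $|\mathcal{E}|=O(\delta n^2)$. But this bound does not prevent the pairs of $\mathcal{E}$ from covering \emph{every} vertex of $V(G)\setminus\{x\}$. Indeed, the extremal configurations themselves furnish counterexamples: in $F_1(\mathcal{E},3m)$ one may choose an admissible $\mathcal{E}$ in which every vertex of $V\setminus\{x\}$ lies in at least one pair of $\mathcal{E}$ (e.g.\ a near-perfect matching between $V_2$ and $V_3$ plus two cross-pairs per vertex of $V_1$), so that no clean vertex --- let alone a clean cross pair --- exists. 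Any argument relying on clean pairs must therefore fail on exactly the graphs you are trying to characterise. The paper circumvents this entirely: from condition~(i), for any $v_3\in V_3$ one has $\Gamma(x,v_3)\cap V_3=\emptyset$, hence $\delta_2(G)\le d(x,v_3)\le|V_1|+|V_2|$, which gives $|V_3|\le n-1-\delta_2(G)$; and in the boundary case this forces all cross triples $xvv_3$ to be present for all $v_3\in V_3$, so that any present triple $xv_1v_2$ (which exists by (ii)) has every $v_1v_2v_3$ excluded by $K_4$-avoidance, pushing $d(v_1,v_2)$ below $\delta_2(G)$. That direct two-step argument is what you should be using in place of the clean-pair step; the rest of your outline (defining $\mathcal{E}$, noting tripartite triples must meet $\mathcal{E}$, reading off admissibility from $d(x,v)\ge\delta_2(G)$) is sound.
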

The proof of Theorem~\ref{theorem: K4 stability} is very similar to that of the case $n=3m+2$ of Theorem~\ref{theorem: K4 threshold}, while the proof of Theorem~\ref{theorem: extremal configurations} is a straightforward application of parts (i) and (ii) of Theorem~\ref{theorem: K4 stability}.  We therefore defer these proofs to the appendix. 

\section{Covering thresholds for other $3$-graphs}\label{section: other 3-graphs}
\subsection{$K_4^-$}
\begin{proof}[Proof of the lower bound in Theorem~\ref{theorem: k4-}]
We construct a $3$-graph $F_2(n)$ on $V=[n]$. Select a special vertex $x$. Split the remainder of the vertices into six parts $\sqcup_{i=1}^6V_i=V\setminus\{x\}$ with sizes as equal as possible, as follows:
\[\vert V_1\vert -1 \leq \vert V_6\vert \leq \vert V_5\vert \leq \vert V_4\vert \leq \vert V_3\vert \leq \vert V_2\vert \leq \vert V_1\vert.\]

Put as the link of $x$ the blow-up of a $6$-cycle through the six parts, i.e. add all triples of the form $xV_iV_{i+1}$ for $i\in[6]$, winding round modulo $6$ as necessary (identifying $V_7$ with $V_1$, and so on). Finally add those triples not involving $x$ which are not of type $V_iV_iV_{i+1}$, $V_iV_{i+1}V_{i+1}$ or $V_iV_{i+1}V_{i+2}$ for $i\in [6]$ (winding round modulo $6$) to form the $3$-graph $F_2(n)$.



Observe that the link graph of $x$ in $F_2(n)$ is triangle-free (being the blow-up of a $6$-cycle). Thus a putative $K_4^-$ containing $x$ would have to be induced by a $4$-set $\{a,b,c,x\}$, with $abc$, $abx$ and $acx$ all being triples of $F_2(n)$. Since $ab$ is in the link graph of $x$, we must have that $a,b$ come from different but adjacent parts $V_i, V_{i+1}$; by symmetry of $F_2(n)$, we may assume without loss of generality that $a\in V_1$ and $b\in V_2$. Since $acx \in E(F_2(n))$, it follows that $c\in V_2$ or $c\in V_6$. But by construction of $F_2(n)$, there are no triples of type $V_6V_1V_2$ or $V_1V_2V_2$, so that we cannot have in $abc \in E(F_2(n))$. Thus there is no copy of $K_4^-$ in $F_2(n)$ covering $x$.

Let us now compute the minimum codegree of $F_2(n)$. Consider vertices $a_i, a_i' \in V_i$, $a_{i+1}\in V_{i+1}$, $a_{i+2} \in V_{i+2}$ and $a_{i+3}\in V_{i+3}$. We have that $d(a_i, a_{i}')= n-3-\vert V_{i-1}\vert -\vert V_{i+1}\vert$, $d(a_i, a_{i+2})=n-3 -\vert V_{i+1}\vert$, $d(a_i, a_{i+3})=n-3$, and, lastly, 
\begin{align*}
d(a_i, x)=\vert V_{i-1}\vert + \vert V_{i+1}\vert \qquad\textrm{and} \qquad d(a_i, a_{i+1})= 1 + \vert V_{i+3}\vert +\vert V_{i+4}\vert.
\end{align*}
Up to the choice of $i$, this covers all possible pairs in $F_2(n)$. The first three quantities are at least $n-3-2\lceil\frac{n-1}{6}\rceil\geq \frac{2n}{3} -\frac{13}{3}$, which for $n\geq 12$ is greater than $\lfloor \frac{n-1}{3} \rfloor$. The last two quantities are both of order $\frac{n}{3}+O(1)$, however, and we analyse them more closely. Set $n=6m + r$ for some $r \in \{0,1,2,3,4,5\}$. Then
\begin{align*}
d(a_i,x)& \geq \min_i \bigl(\vert V_{i-1}\vert + \vert V_{i+1}\vert\bigr)= \vert V_6\vert +\vert V_4\vert=\left\{\begin{array}{ll}
2m-1 & \textrm{if $r=0$}\\
2m & \textrm{if $0<r <5$}\\
2m+1 & \textrm{if $r=5$,}
 \end{array}\right.
\end{align*}
and
\begin{align*} 
d(a_i, a_{i+1})& \geq \min_i \bigl(1+\vert V_{i+3}\vert +\vert V_{i+4}\vert\bigr)= 1+ \vert V_5\vert + \vert V_6\vert=\left\{\begin{array}{ll}
2m & \textrm{if $r=0$}\\
2m+1 & \textrm{if $0<r \leq 5$.}
 \end{array}\right.
\end{align*}
Thus
\begin{align*}
c_2(n, K_4^-)&\geq \delta_2(F_2(n))=\left\{\begin{array}{ll}
2m-1 & \textrm{if }r=0\\
2m & \textrm{if }0<r <5\\
2m+1 & \textrm{if }r=5.
\end{array} \right. 
\end{align*}
\end{proof}

\begin{proof}[Proof of the upper bound in Theorem~\ref{theorem: k4-}]
Let $G$ be a $3$-graph on $n\geq 4$ vertices. Suppose $\delta_2(G)> \frac{n}{3}$. Pick an arbitrary vertex $x\in V(G)$. Let $abx$ be any $3$-edge containing $x$. We have $d(a,b)+d(a,x)+d(b,x)-3>n-3$. So by the pigeonhole principle, there exists $c \in V(G)\setminus \{a,b,x\}$ which makes a $3$-edge of $G$ with at least two of $ab$, $ax$, $bx$. The $4$-set $abcx$ then contains a copy of $K_4^-$ in $G$ covering $x$, as required. This shows that $c_2(n, K_4^-) \leq \lfloor \frac{n}{3} \rfloor$. 
\end{proof}
\begin{remark} Again we actually proved something stronger here: our argument establishes that for $\delta_2(G)$ above $\lfloor \frac{n}{3} \rfloor$, every \emph{triple} of $E(G)$ can be extended to a copy of $K_4^-$.
\end{remark}
Matching the upper and lower bounds obtained above, we obtain the set of possible values for $c_2(n, K_4^-)$ claimed in Theorem~\ref{theorem: k4-}.\qed

\begin{remark}
	We believe the gap between the upper and lower bounds for $c_2(n, K_4^-)$ could be closed using similar (but more involved) stability arguments to those we used on to determine $c_2(n, K_4)$. However since such arguments would be non-trivial (the conjectured extremal configurations in this case are $6$-partite) and would greatly increase the length of this paper, we do not pursue them here and leave open the determination of $c_2(n, K_4^-)$ in the case where $n\equiv 0,3,4 \bmod{6}$.
\end{remark}
\subsection{$C_5$}
\begin{proof}[Proof of the lower bound in Theorem~\ref{theorem:  c5}]
	We construct a $3$-graph $F_3(n)$ on $V=[n]$. Select a special vertex $x$. Split the remainder of the vertices into two parts $V\setminus\{x\}=V_1\sqcup V_2$ with sizes as equal as possible, $\vert V_2\vert-1 \leq \vert V_1\vert \leq \vert V_2 \vert$. Form the link graph of $x$ by adding in all pairs internal to one of the parts, i.e.  all pairs of the form $xV_1V_1$ or $xV_2V_2$. Next, add in all triples not containing $x$ and meeting both of the parts, i.e. all pairs of the form $V_1V_1V_2$ or $V_1V_2V_2$. This yields a $3$-graph $F_3(n)$ with minimum codegree
	$\delta_2(F_3(n))=\vert V_1\vert -1 = \lfloor \frac{n-3}{2}\rfloor$,  attained by $x$ and any vertex $a\in V_1$; see Figure~2.

	Now there is no copy of $C_5$ covering $x\in F_3(n)$. Indeed, let $S=\{a_1,a_2,b_1,b_2\}$ be a set of four distinct vertices in $V\setminus \{x\}$ such that all of $a_1a_2x$, $a_1b_1x$ and $b_1b_2x$ are triples of $F_3(n)$. Then by construction these four vertices must all lie within the same part of $F_3(n)$. But by construction again we have that $S$ spans no triple of $F_3(n)$, whence $S\cup\{x\}$ does not contain a copy of $C_5$. 
\end{proof}
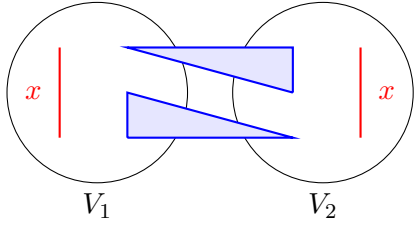
\begin{figure}\label{figure: c5 construction}

\centering
\begin{tikzpicture}

\draw (0,0) circle [radius=1.2];
\draw (3,0) circle [radius=1.2];


\coordinate (a1) at (0.4, -0.6);
\coordinate (a2) at (0.4, 0);
\coordinate (a3) at (0.4, 0.6);

\coordinate (b1) at (2.6, -0.6);
\coordinate (b2) at (2.6, 0);
\coordinate (b3) at (2.6, 0.6);

\draw[blue!100, thick, fill=blue!10] (a1)-- (a2) -- (b1) -- (a1);
\draw[blue!100, thick, fill=blue!10] (b2)-- (b3) -- (a3) -- (b2);

\coordinate (A1) at (-0.5, 0.6);
\coordinate (A2) at (-0.5, -0.6);
\draw[thick, red!100] (A1) -- (A2); 

\coordinate (B1) at (3.5, 0.6);
\coordinate (B2) at (3.5, -0.6);
\draw[thick, red!100] (B1) -- (B2); 

\node (aedge) at (-0.85,0)[red] {$x$};
\node (bedge) at (3.85 , 0)[red] {$x$};

\node (an) at (0, -1.5)[] {$V_1$};
\node (bn) at (3, -1.5)[] {$V_2$};
\end{tikzpicture}
\caption{The $3$-graph $F_3(n)$. The red pairs and the blue triples make up the link graph of $x$ and the remainder of $E(F_3)$ respectively.}
\end{figure}

\begin{proof}[Proof of the upper bound in Theorem~\ref{theorem:  c5}]
	Let $G=(V,E)$ be a $3$-graph on $n$ vertices with minimum codegree $\delta_2(G)>\frac{n}{2}$. Let $x$ be any vertex. Fix an edge $ab$ in the link graph $G_x$. Since $\delta_1(G_x)> n/2$, $a$ and $b$ each has at least $\frac{n}{2} -1$ neighbours in $V\setminus \{x, u, v\}$. Hence $a$ and $b$ have a common neighbour $c$ in $G_x$.
	We shall use the triangle $\{a,b,c\}$ to find a copy of $C_5$ covering $x$. For this purpose, it is convenient to introduce the following notation. Given a $4$-set of vertices $\{y_1,y_2,z_1,z_2\}$ from $V\setminus \{x\}$, write $y_1y_1\vert z_1z_2$ as a shorthand for the statement that all of $y_1z_1x$, $y_1y_2x$, $y_2z_2x$, $y_2z_1z_2$ and $y_1z_1z_2$ are in $E(G)$ (and in particular that $\{x,y_1,z_1,z_2,y_2\}$ contains a copy of $C_5$ covering $x$).

	\begin{lemma}
		There is either a copy of $C_5$ or a copy of $K_4$ covering $x$ in $G$.
	\end{lemma}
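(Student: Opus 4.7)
The plan is to split on whether $abc \in E(G)$. If so, $\{a,b,c,x\}$ spans a $K_4$ covering $x$ and we are done. Otherwise, I will assume for contradiction that no $C_5$ covers $x$ either, and derive a contradiction via a per-vertex double-counting argument over $V \setminus \{a,b,c,x\}$.

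For each such $v$, introduce six $\{0,1\}$-indicators recording whether each of the six triples $avx$, $bvx$, $cvx$, $abv$, $acv$, $bcv$ lies in $E(G)$; call them $e_a, e_b, e_c, e_{ab}, e_{ac}, e_{bc}$. The absence of a $K_4$ covering $x$ on $\{a,b,v,x\}$, $\{a,c,v,x\}$, $\{b,c,v,x\}$ yields three forbidden products of the form $e_{ij}e_ie_j$. Using the $y_1y_2\vert z_1z_2$ shorthand with $v$ placed in the $z_2$-slot and the remaining slots $(y_1,y_2,z_1)$ running over the six orderings of $\{a,b,c\}$, the no-$C_5$ assumption yields six further forbidden products:
\[
e_b e_{ac} e_{bc},\ e_c e_{ab} e_{bc},\ e_a e_{ac} e_{bc},\ e_c e_{ab} e_{ac},\ e_a e_{ab} e_{bc},\ e_b e_{ab} e_{ac}.
\]
The symmetric placements of $v$ in a $y$-slot turn out to be vacuous, since each of them would require the already-excluded edge $abc$.

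A short case analysis on $X := e_a + e_b + e_c \in \{0,1,2,3\}$ will show that $X + Y \le 3$ for every $v$, where $Y := e_{ab} + e_{ac} + e_{bc}$. Indeed, for $X = 3$ the three $K_4$-constraints force $Y = 0$; for $X = 2$, say $e_a=e_b=1$, one edge-indicator dies by a $K_4$-constraint while the remaining two have product zero by a $C_5$-constraint, giving $Y \le 1$; for $X = 1$, say $e_a=1$, the two $C_5$-constraints involving $e_a$ bound $Y$ by $2$; and $X = 0$ is trivial.

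Summing over the $n-4$ choices of $v$ then gives $\sum_v (X + Y) \le 3(n-4) = 3n - 12$. On the other hand, using $\delta_2(G) > n/2$ together with $b,c \in \Gamma(a,x)$ and (crucially) $c \notin \Gamma(a,b)$ (the latter since $abc \notin E$), plus the analogous statements for the other pairs, I get $\sum_v e_i > n/2 - 2$ for $i \in \{a,b,c\}$ and $\sum_v e_{ij} > n/2 - 1$ for $\{i,j\}\subset\{a,b,c\}$, hence $\sum_v (X+Y) > 3n - 9$, contradicting the upper bound. The main obstacle is just the careful bookkeeping of the six $C_5$-forbidden products and the verification that the $y$-slot placements collapse; once these are in hand the case analysis and the final arithmetic are clean.
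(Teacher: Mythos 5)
Your proposal is correct and is essentially the same argument as the paper's, in contrapositive form: the paper double-counts the same six codegrees $d(a,b),\dots,d(c,x)$ to produce a single vertex $y\in V\setminus S$ making a $3$-edge with at least four of the pairs in $S^{(2)}$, then case-checks (by symmetry, on the number of triangle pairs $ab,ac,bc$ that are present) that such a $y$ yields a $K_4$ or a $C_5$ covering $x$. That case-check is exactly the contrapositive of your per-vertex inequality $X+Y\le 3$, and the two summations close the argument identically.
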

	\begin{proof}
		If  $abc\in E(G)$ then the claim is immediate since $S=\{a,b,c,x\}$ induces a complete $3$-graph. Assume therefore that $abc\notin E(G)$. By our codegree assumption,
		\begin{align*}
		d(a,b)+d(a,c)+d(b,c)+d(a,x)+d(b,x)+d(c,x) -9&\geq 6\delta_2(G)-9>3(n-4). 
		\end{align*}
		Thus there exists $y\in V\setminus S$ which makes a $3$-edge with at least four of the pairs $ab$, $ac$, $bc$, $ax$, $bx$, $cx$. It is now easy to check that $S\cup \{y\}$ contains either a $K_4$ or a $C_5$ covering $x$. Indeed by symmetry we may reduce the case-checking to the following three possibilities:
		\begin{itemize}
			\item if $y$ makes a $3$-edge with $ab$, $bc$, $ac$ and $ax$, then $ab\vert cy$;
			\item if $y$ makes a $3$-edge with $ab$, $ac$ and at least one of $bx$ or $cx$, then $bc\vert ay$;
			\item if $y$ makes a $3$-edge with $ab$ and with both of $ax$ and $bx$, then $\{a,b,x,y\}$ induces a copy of $K_4$. \qedhere
		\end{itemize} 
	\end{proof}

	With a view towards proving Theorem~\ref{theorem:  c5}, we may thus assume that there is a copy of $K_4$ covering $x$. Let $S=\{a,b,c,x\}$ be a $4$-set of vertices inducing such a $K_4$. By the codegree assumption,
	\begin{align*}
	d(a,b)+d(a,c)+d(b,c)+d(a,x)+d(b,x)+d(c,x) -12&\geq 6\delta_2(G)-12>3(n-4). 
	\end{align*}
	Thus there exists $y\in V\setminus S$ which makes a $3$-edge with at least four of the pairs $ab$, $ac$, $bc$, $ax$, $bx$, $cx$. It is now easy to check that $S\cup \{y\}$ contains a copy of $C_5$ covering $x$. Indeed by symmetry we may reduce the case-checking to the following three possibilities:
	\begin{itemize}
		\item if $y$ makes a $3$-edge with $ab$, $ac$, $bc$ and $ax$, then $ab\vert cy$;
		\item if $y$ makes a $3$-edge with $ab$, $ac$ and at least one of $bx$ and $cx$, then $bc\vert ay$;
		\item if $y$ makes a $3$-edge with $ab$ and with all of $ax$, $bx$ and $cx$, then  $cy\vert ab$.
	\end{itemize}
	In all three cases we cover $x$ with a copy of $C_5$. The claimed upper bound on $c_2(n, C_5)$ follows.
\end{proof}

\begin{remark}\label{remark: c5 unstable}
Interestingly, as pointed out to us by Jie Han and Allan Lo, another very different construction attains the lower bound in Theorem~\ref{theorem:  c5}. Take a balanced bipartition of $[n]$ into two sets $V_1$ and $V_2$, with $\vert V_1\vert  \leq \vert V_2 \vert$. Now take all triples meeting $V_1$ in an even number of vertices to form a $3$-graph $F_4(n)$. Note that $\delta_2(F_4(n))=\min\left(\vert V_1 \vert -1, \vert V_2\vert -2 \right)$ (attained by pairs from $A\times B$ and $B^{(2)}$ respectively), which is exactly equal to $\lfloor\frac{n-3}{2}\rfloor$ . Now, it is an easy exercise to check that every vertex $x\in V_1$ fails to be covered by a $C_5$, giving us a second proof that $c_2(n, C_5)\geq \lfloor \frac{n-3}{2}\rfloor$. In particular, we do not have stability for this problem: we have two near-extremal constructions which are easily seen to lie at edit distance $\Omega(n^3)$ from each other. Also we have that just below the codegree threshold for covering by $C_5$, we could have as many as $\lfloor \frac{n}{2}\rfloor$ uncovered vertices. This stands in sharp contrast with the situation for $K_4$ (see the discussion in Section~\ref{section: conclusion}).
\end{remark}

\subsection{$K_5^-$}
\begin{proof}[Proof of Theorem~\ref{theorem: k5-}]
For the lower bound, note that
\[c_2(n, K_5^-)\geq c_2(n, K_4)\geq \delta_2(F_1(n))=\left\lfloor \frac{2n-5}{3}\right\rfloor.\]
For the upper bound, let $G$ be a $3$-graph on $n$ vertices with $\delta_2(G)> \frac{2n-2}{3}$. By Theorem~\ref{theorem: K4 threshold}, for any vertex $x\in V(G)$ there is a triple $a_1,a_2,a_3$ such that $S=\{x,a_1,a_2,a_3\}$ induces a copy of $K_4$ in $G$. Now
\[d(x,a_1)+d(x,a_2)+d(x,a_3)+d(a_1,a_2)+d(a_1,a_3)+d(a_2,a_3)-12> 4(n-4),\]
whence there exist $a_4 \in V\setminus S$ which makes a $3$-edge with at least $5$ of the pairs from $S^{(2)}$. Thus $S\cup\{ a_4\}$ contains a copy of $K_5^-$ covering $x$. This shows that $c_2(n, K_5^-)\leq \lfloor\frac{2n-2}{3}\rfloor$.
\end{proof}


\subsection{The Fano plane}
\begin{proposition}\label{prop: fano bounds}
$\lfloor \frac{n}{2}\rfloor\leq c_2(n, \mathrm{Fano})\leq \lfloor\frac{2n}{3}\rfloor$.
\end{proposition}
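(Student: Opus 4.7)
My plan is to prove the two bounds of Proposition~\ref{prop: fano bounds} separately.

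For the lower bound, I would use a bipartite construction. Partition $V = [n]$ into $V_1 \sqcup V_2$ with $|V_1| = \lfloor n/2 \rfloor$ and $|V_2| = \lceil n/2 \rceil$, and let $F_5(n)$ consist of all triples meeting both parts. Then no $3$-edge of $F_5(n)$ is monochromatic under this bipartition. Since the Fano plane is not $2$-colourable, it follows that $F_5(n)$ contains no copy of Fano at all, and in particular has no Fano covering. The non-$2$-colourability of Fano is classical; a self-contained argument runs as follows. In any $2$-colouring of $\mathrm{Fano}$ the minority colour class $S$ has size $3$, and if $S$ is not itself a line then exactly three of the seven lines meet $S$ in two vertices, contributing $6$ line-incidences to vertices of $S$. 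If the remaining four lines each met $S$ in at least one vertex, then $S$ would support at least $10$ line-incidences, contradicting the $3 \times 3 = 9$ incidences given by the $3$-regularity of Fano. Finally a direct codegree computation shows that pairs inside a single part have codegree equal to the size of the other part and pairs across parts have codegree $n-2$, so $\delta_2(F_5(n)) = \min(|V_1|, |V_2|) = \lfloor n/2 \rfloor$, establishing $c_2(n, \mathrm{Fano}) \geq \lfloor n/2 \rfloor$.

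For the upper bound, I would apply Lemma~\ref{lemma: trivial upper bound} directly with $F = \mathrm{Fano}$. The Fano plane is $3$-regular, and since deleting triples cannot raise $\delta_1$, every subgraph $F' \subseteq \mathrm{Fano}$ satisfies $\delta_1(F') \leq 3$; hence the parameter $r$ in the lemma is exactly $3$. Substituting $r = 3$ and $|V(\mathrm{Fano})| = 7$ yields
\[
c_2(n, \mathrm{Fano}) \leq \left\lfloor \Bigl(1 - \tfrac{1}{3}\Bigr)\, n + \frac{7 - 2 \cdot 3 - 1}{3} \right\rfloor = \left\lfloor \tfrac{2n}{3} \right\rfloor.
\]

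The only combinatorial ingredient not immediately delivered by Lemma~\ref{lemma: trivial upper bound} is the non-$2$-colourability of $\mathrm{Fano}$, a classical short incidence count, so I do not foresee any serious obstacle.
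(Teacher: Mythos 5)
Your proposal matches the paper's proof exactly: the same bipartite lower-bound construction (all triples meeting both parts), justified by the non-$2$-colourability of the Fano plane, and the same application of Lemma~\ref{lemma: trivial upper bound} with $F=\mathrm{Fano}$ and $r=3$ for the upper bound, yielding $\lfloor 2n/3 \rfloor$. The one small imprecision is in your supplied incidence count for non-$2$-colourability: you assert that the minority colour class $S$ has size exactly $3$, whereas it could a priori have size $0$, $1$, or $2$; those cases are immediate (if $|S|\le 2$, at least $7-(3+3-1)=2$ of the seven lines avoid $S$ and are therefore monochromatic), but they should be noted before reducing to the $|S|=3$ case.
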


\begin{proof}
The lower bound is from the codegree threshold of the Fano plane: consider a bipartition of $[n]$ into two sets $V_1\sqcup V_2$ with $\vert V_1\vert =\lfloor \frac{n}{2}\rfloor$ and $\vert V_2\vert = \lceil \frac{n}{2}\rceil$, and adding all triples meeting both parts. The resulting $3$-graph is easily seen to be $\mathrm{Fano}$-free (it is $2$-colourable, whereas the Fano plane is not) and has codegree $\lfloor \frac{n}{2}\rfloor$. For the upper bound, apply Lemma~\ref{lemma: trivial upper bound} with $F=\mathrm{Fano}$ and $r=3$.
\end{proof}

\subsection{$F_{3,2}$}
\begin{theorem}\label{prop: f32 bounds}
${1}/{3}\leq c_2(F_{3,2})\leq  3/7$.
\end{theorem}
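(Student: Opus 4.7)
The lower bound $c_2(F_{3,2}) \geq 1/3$ is an immediate consequence of the codegree Tur\'an density $\gamma(F_{3,2}) = 1/3$ established by Falgas-Ravry, Marchant, Pikhurko, and Vaughan~\cite{FalgasRavryMarchantPikhurkoVaughan15}: any $F_{3,2}$-free $3$-graph trivially has no $F_{3,2}$-covering, so $c_2(n, F_{3,2}) \geq \mathrm{ex}_2(n, F_{3,2})$ for every $n$, and letting $n \to \infty$ gives $c_2(F_{3,2}) \geq \gamma(F_{3,2}) = 1/3$.

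For the upper bound $c_2(F_{3,2}) \leq 3/7$, I would first reformulate the covering condition. A copy of $F_{3,2}$ on vertices $\{p_1, p_2, p_3, p_4, p_5\}$ consists of a pair $\{p_1, p_2\}$ together with three common $G$-neighbours $p_3, p_4, p_5$ satisfying $\{p_3, p_4, p_5\} \in E(G)$. Thus a vertex $x$ is covered by $F_{3,2}$ if and only if either
(A) some $y \neq x$ has $\Gamma_G(x, y)$ containing a $3$-edge of $G$ (here $x$ plays the role of $p_1$ or $p_2$), or
(B) some edge $yz$ of the link graph $L := G_x$ (that is, $xyz \in E(G)$) has $\Lambda_{yz} := \Gamma_G(y, z) \setminus \{x\}$ containing an edge of $L$ (here $x$ plays the role of $p_3$, $p_4$, or $p_5$).
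Suppose for contradiction that $\delta_2(G) > 3n/7 + O(1)$ yet some vertex $x$ is uncovered; then failure of (A) and (B) gives respectively the conditions (P1) every $N_L(y) = \Gamma_G(x, y)$ is $G$-independent, and (P2) every $\Lambda_{yz}$ for $yz \in L$ is $L$-independent.

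The plan is then to combine (P1) and (P2) via double counting. Used separately, (P1) only gives $d_L(y) \leq n - \delta_2(G)$ (since any pair in the $G$-independent set $N_L(y)$ has codegree $\geq \delta_2(G)$ supported outside $N_L(y)$), and (P2) only gives $|\Lambda_{yz}| \leq n - 1 - \delta_2(G)$, each producing the weak bound $\delta_2(G) \leq n/2$. To sharpen this to $3n/7$, I would double count triples $(y, z, v)$ with $yz \in L$ and $v \in \Lambda_{yz}$: one direction yields the lower bound $\sum_{yz \in L} |\Lambda_{yz}| \geq e(L)(\delta_2(G) - 1)$ by the codegree condition, while rewriting the sum as $\sum_v |E(L) \cap E(L_v)|$ and using the $L$-independence of each $\Lambda_{yz}$ from (P2) together with the $G$-independence of each $N_L(v)$ from (P1) should provide an upper bound in terms of $e(L)$ and $\delta_2(G)$. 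Balancing this against the upper bound $e(L) \leq (n-1)(n - \delta_2(G))/2$ (itself a consequence of (P1)) would force $7\delta_2(G) \leq 3n + O(1)$, the required contradiction.

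The main obstacle is extracting the sharp constant $3/7$: applying either (P1) or (P2) alone, or combining them naively, yields only $\delta_2(G) \leq n/2$, and the improvement to $3/7$ requires a careful simultaneous use of the two independence conditions. I anticipate that the tight argument may need to weight the double count by the sizes $|N_L(y)|$ and $|\Lambda_{yz}|$, or to split into cases depending on whether these sizes sit near their upper or lower bounds within $[\delta_2(G), n - \delta_2(G)]$.
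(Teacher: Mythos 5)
The lower bound is fine and matches the paper's: an $F_{3,2}$-free graph has no $F_{3,2}$-covering, so $c_2(n,F_{3,2})\geq \mathrm{ex}_2(n,F_{3,2})$, whence $c_2(F_{3,2})\geq\gamma(F_{3,2})=1/3$.

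For the upper bound your proposal has a genuine gap, which you yourself flag. You correctly reformulate the ``$x$ uncovered'' condition into your (P1) (every $\Gamma(x,y)$ is $G$-independent) and (P2) (for $yz\in G_x$, $\Gamma(y,z)\setminus\{x\}$ is independent in $G_x$), and you correctly observe that using either alone gives only $\delta_2\leq n/2$. But you then state a \emph{plan} to double-count triples $(y,z,v)$ with $yz\in G_x$, $v\in\Lambda_{yz}$, anticipating that this will yield $7\delta_2(G)\leq 3n+O(1)$, without actually carrying the balance through. As written, the upper bound is unproved: the key inequality never materializes, and you even say explicitly that you do not know how to extract the sharp constant. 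This is the heart of the matter, not a technicality.

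The paper's route is different and concrete. It isolates a lemma: if one can find $A\subseteq\Gamma(x,y)$ of size $cn$ and a \emph{disjoint} set $B$ of size $cn$ that is independent in both $G$ and $G_x$, then $c\leq 3/7+o(1)$. The proof of that lemma counts missing triples of type $AAB$. From above, the codegree condition on pairs in $A$ shows at most $\binom{cn}{2}(1-2c)n$ such triples are absent (since $A$ is $G$-independent, each pair in $A$ must get almost all its codegree from $B\cup C$). From below, for each $b\in B$ one fixes an auxiliary $b'\in B$ and observes that any $a\in\Gamma(b,b')\cap\Gamma(b,x)\cap A$ together with any $a'\in\Gamma(b',x)\cap A$ would give $ab\vert a'b'x$ (a copy of $F_{3,2}$ through $x$) unless $aa'b\notin E(G)$; the codegree condition makes both of these intersection sets large, yielding roughly $\bigl(\binom{(3c-1)n}{2}-\binom{(1-2c)n}{2}\bigr)cn$ missing $AAB$ triples. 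Equating the two bounds gives $c\leq 3/7$. The paper then checks, splitting on whether $G_x$ contains a triangle, that sets $A,B$ as in the lemma always exist (using (P1)/(P2)-type reasoning plus a small pigeonhole step to show $B$ is $G$-independent). Your reformulation is a correct first step, but the crucial construction of the two sets and the $AAB$ count --- which is precisely where $3/7$ comes from --- is missing from your write-up, so the proof is not complete.
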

\begin{proof}
The lower bound is from the codegree density of $F_{3,2}$. An $F_{3,2}$-free construction on $n$ vertices with codegree $\lfloor \frac{n}{3}\rfloor-1$ is obtained by considering a tripartition of $[n]$ into three parts with sizes as equal as possible, $\vert V_3\vert-1\leq \vert V_1\vert \leq \vert V_2\vert \leq \vert V_3\vert$ and adding all triples of the form $V_iV_iV_{i+1}$ (this is not actually best possible --- see~\cite{FalgasRavryMarchantPikhurkoVaughan15} for a determination of the precise codegree threshold and the extremal constructions attaining it).

For the upper bound, let $G$ be a $3$-graph on $n$ vertices with $\delta_2(G)= cn$. Suppose there exists $x\in V(G)$ such that there is no copy of $F_{3,2}$ in $G$ covering $x$. This means that for every vertex $v\in V\setminus \{x\}$,  $\Gamma(x,v)$ is an independent ($3$-edge--free) set in $G$, and moreover that for every $4$-set $\{a,b,c,d\}\subseteq V(G)$, at least one of the triples $\{xab, xcd, abc,abd\}$ is not in $E(G)$. For convenience, we shall write $ab\vert cde$ as a short-hand for the statement that $\{abc, abd, abe, cde\}$ all are $3$-edges of $G$.

We use the following technical lemma to deduce $c\le 3/7+o(1)$. 
\begin{lemma}\label{lem:AB}
If there exist sets $A, B\subseteq V$ such that
\begin{enumerate}
\item $A$ is a subset of $\Gamma(x,y)$ of size $cn$ for some $y\in V\setminus\{x\}$, and $B$ is a subset of $V\setminus (A+\cup\{x\})$ of size $cn$, and
\item $B$ is independent in $G$ and the link graph $G_x$,
\end{enumerate}
then $c\le 3/7+o(1)$.
\end{lemma}

\begin{proof}[Proof of Lemma~\ref{lem:AB}]
Let $C= V\setminus \left(A\cup B\right)$. We have $\vert C\vert = n(1-2c)$. 
By our assumption, $A$ is independent in $G$. 
By the codegree assumption, at least $\binom{\vert A\vert}{2}\left(cn -\vert C\vert\right)$ triples of $G$ have two vertices in $A$ and one vertex in $B$. Consequently, at most $\binom{\vert A\vert}{2} \vert C\vert = \binom{cn}{2} (1-2c)n$ triples of the form $AAB$ are missing from $G$.

On the other hand, let $b, b'\in B$. Since $B$ is independent in $G$ and $G_x$, we have $\Gamma(b, b')\subseteq A\cup C$ and $\Gamma(b, x)\subseteq A\cup C$. Consequently $\vert \Gamma(b',x)\cap A\vert \geq cn -\vert C\vert \geq (3c-1)n$ and 
\begin{align*}
\vert \Gamma(b,b')\cap \Gamma(b,x)\cap A\vert &\geq 2(cn - \vert C\vert) -\vert A\vert\geq (5c-2)n.
\end{align*}
For any $a\in \Gamma(b,b')\cap \Gamma(b,x)\cap A$ and any $a' \in \Gamma(b',x)\cap A$, the triple $aa'b$ must be absent from $G$ -- otherwise $ab\vert a'bx$. There are at least $\binom{ (3c-1)n }2 - \binom{ (1-2c)n }2$ such pairs $(a,a')$ because, in general, there are at least $\binom{|A_1|}2 - \binom{|A_1| - |A_2|}2$ pairs $(a_1, a_2)$ with $a\in A_1$ and $a_2\in A_2$ for arbitrary sets $A_1, A_2$ satisfying $|A_1|\ge |A_2|$.

There are thus at least $\binom{ (3c-1)n }2 - \binom{ (1-2c)n }2$ distinct pairs $(a,a')$ for which $aa'b\notin E(G)$. Summing over all $b\in B$, this gives us a total of at least $\left( \binom{ (3c-1)n }2 - \binom{ (1-2c)n }2 \right)cn$ $AAB$ triples missing from $E(G)$. Combining this together with our upper bound on the number of missing $AAB$ triples yields the inequality
\[
\left(\binom{ (3c-1)n }2 - \binom{ (1-2c)n }2 \right)cn \le \binom{cn}2 (n(1-2c)-1),
\]
which implies that
\[
\left( (3c-1)^2 - (1- 2c)^2 \right) \frac{n^2}2 cn \le \frac{ c^2 n^2} 2 \left(1 - 2c \right) n+O(n^2).
\]
 This inequality in turn gives $c\le 3/7+o(n^{-1})$.
\end{proof}

We now show that we can find $A,B \subseteq V$ satisfying the properties in Lemma~\ref{lem:AB}.

Suppose first of all that $G_x$ is not triangle-free. Let $y a_1 a_2$ vertices spanning a triangle in $G_x$. 
Let $A$ be a subset of $\Gamma(x,y)$ of size $cn$. Then $A$ must be an independent set in $G$.
Let $B$ be a subset of $\Gamma(a_1,a_2)$ in $V\setminus \{x\}$ of size $cn$. Then $B$ is disjoint from $A$ and is an independent set in $G_x$ --- indeed if $b_1b_2 \in G_x$ for some $b_1,b_2\in B$ then $a_1a_2\vert xb_1b_2$, a contradiction. We now show that $B$ is an independent set in $G$. Indeed, for every $b\in B$, $\Gamma(b,x)$ is a subset of $V\setminus B$ of size at least $cn$.  Consider an arbitrary triple $\{b_1,b_2,b_3\}$ of distinct vertices from $B$. Since
\begin{align*}
d(b_1,x)+d(b_2,x)+d(b_3, x) -2 \left(n-\vert B\vert\right)&\geq \left(3c -2(1-c)\right)n=(5c-2)n>0,
\end{align*}
by the pigeon-hole principle there exists $a\in A\cup C$ with $xab_1, xab_2, xab_3$ all in $E(G)$. In particular $b_1b_2b_3 \notin E(G)$, as otherwise we would have $ax\vert b_1b_2b_3$. It follows that $B$ must be an independent set in $G$. Thus $A, B$ satisfy the two properties in Lemma~\ref{lem:AB}, and thus $c\leq 3/7 +o(1)$

On the other hand, suppose $G_x$ was triangle-free. Let $y\in V\setminus \{x\}$, and let $A$ be a subset of $\Gamma(x,y)$ of size $cn$. Since $G_x$ is triangle-free and $x$ is not covered by an $F_{3,2}$-subgraph, 
$A$ forms an independent set in both $G_x$ and $G$. Let $a\in A$ be arbitrary, and let $B$ be a subset of $\Gamma(a,x)$ of size $cn$. Then $B$ is disjoint from $A$ and independent in $G_x$ (since $G_x$ is triangle-free). Thus $A, B$ satisfy the two properties in Lemma~\ref{lem:AB}, and $c\leq 3/7+o(1)$. Thus $c_2(F_{3,2})\leq 3/7$ as claimed.
\end{proof}


\subsection{$K_t$, $t\geq 5$}\label{subsection: kt bounds}

\begin{proposition}\label{prop:Kt}
For all $t\ge 5$, $c_2(K_t) \leq 1 - 1/\binom{t-1}{2}$.
\end{proposition}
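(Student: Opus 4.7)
The plan is to apply Lemma~\ref{lemma: trivial upper bound} directly with $F=K_t$. The only thing to verify is the value of
\[
r := \max\{\delta_1(F') : F' \subseteq K_t\}.
\]
Any subgraph $F'$ of $K_t$ has at most $t$ vertices, so every vertex of $F'$ lies in at most $\binom{t-1}{2}$ triples of $F'$; hence $\delta_1(F') \le \binom{t-1}{2}$. This bound is attained by $F' = K_t$ itself, where every vertex has degree exactly $\binom{t-1}{2}$. So $r = \binom{t-1}{2}$.

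Plugging $r = \binom{t-1}{2}$ and $|V(K_t)| = t$ into Lemma~\ref{lemma: trivial upper bound} yields
\[
c_2(n, K_t) \;\le\; \left\lfloor \left(1 - \frac{1}{\binom{t-1}{2}}\right) n \;+\; \frac{t - 2\binom{t-1}{2} - 1}{\binom{t-1}{2}} \right\rfloor.
\]
The additive term on the right is a constant depending only on $t$. Dividing by $n-2$ and letting $n \to \infty$, this constant contribution vanishes, and we obtain $c_2(K_t) \le 1 - 1/\binom{t-1}{2}$, as claimed.

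There is no real obstacle: the proposition is a straightforward corollary of Lemma~\ref{lemma: trivial upper bound}, obtained by noting that the densest local structure inside $K_t$ is $K_t$ itself. The reason one restricts to $t \ge 5$ is simply that the case $t = 4$ is already covered (and in fact proved exactly) by Theorem~\ref{theorem: K4 threshold}, so there would be nothing new to say there.
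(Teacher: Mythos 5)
Your proof is correct and is essentially identical to the paper's: both apply Lemma~\ref{lemma: trivial upper bound} with $F = K_t$ and $r = \binom{t-1}{2}$, and the constant term you write is just the unsimplified form of the paper's $-\frac{2t-6}{t-2}$. Your closing remark about $t\geq 5$ is also accurate.
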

\begin{proof}
Applying Lemma~\ref{lemma: trivial upper bound} with $F=K_t$ and $r=\binom{t-1}2$, we get 

$c_2(n, K_t)\leq \left\lfloor \left(1 - \frac{1}{\binom{t-1}2} \right)n - \frac{2t-6}{t-2} \right\rfloor$. \qedhere
\end{proof}


We now derive a lower bound for the covering codegree density of $K_{t}$ by using (small) lower-bound constructions for the codegree threshold of $K_{t-1}$ 

\begin{proposition}\label{prop: blow-up of a codegree construction}
Suppose there exists a $K_{t-1}$-free $3$-graph $H$ on $[m]$ with minimum codegree $\delta$. Then $c_2(K_{t})\geq (\delta + 2)/{m}$.
\end{proposition}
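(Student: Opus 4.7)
The plan is to deduce the bound from an explicit blow-up of $H$. The motivating idea is that the link graph of any vertex covered by a copy of $K_t$ must contain a complete $3$-graph $K_{t-1}$, so forbidding a $K_{t-1}$ in a suitable blow-up of $H$ will prevent any vertex from being covered.

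Given $H$ and a large integer $q$, I would set $n = 1 + mq$ and construct a $3$-graph $G$ on $n$ vertices as follows: single out a special vertex $x$, partition the remaining $mq$ vertices into parts $V_1, \ldots, V_m$ of size $q$, take as triples through $x$ all $\{x,u,v\}$ with $u$ and $v$ in distinct parts, and take as the remaining triples all $\{u,v,w\} \subseteq V(G) \setminus \{x\}$ whose multiset of part-indices either has size at most $2$ or is an edge of $H$. This is the direct analogue of the $F_1(n)$ construction from Section~\ref{subsection: k4 covering codegree density}, with the index set $[3]$ there replaced by the vertex set of $H$.

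The first thing to verify is that $x$ lies in no copy of $K_t$. In a putative such copy $\{x, a_1, \ldots, a_{t-1}\}$, the triples $\{x, a_i, a_j\}$ force the $a_i$ to live in pairwise distinct parts $V_{i_1}, \ldots, V_{i_{t-1}}$, and then the triples $\{a_i, a_j, a_k\}$ force the indices $i_1, \ldots, i_{t-1}$ to span a copy of $K_{t-1}$ in $H$, contradicting the assumption on $H$.

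The second step is a routine codegree computation. Cross-part pairs $\{u,v\}$ with $u \in V_i$, $v \in V_j$, $i \neq j$, have codegree $(2q-2) + q\cdot d_H(i,j) + 1 \geq (2+\delta)q - 1$, where the final $+1$ records the contribution of $x$; same-part pairs have codegree $n - 3$, and pairs of the form $\{x, u\}$ have codegree $n - 1 - q$. Provided $\delta \leq m-3$ (which holds because $H$ is $K_{t-1}$-free and hence in particular is not the complete $3$-graph $K_m$), the cross-part value is the minimum, so $\delta_2(G) \geq (2+\delta)q - 1$. Dividing by $n - 2 = mq - 1$ and letting $q \to \infty$ yields $c_2(K_t) \geq (2+\delta)/m$, as required. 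There is no real obstacle here; the only thing to watch out for is the bookkeeping of the contribution of $x$ to cross-part codegrees, which supplies the second of the two units in the ``$+2$'' of the bound.
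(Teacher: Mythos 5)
Your proof is correct and follows the same route as the paper: the same blown-up construction with a special vertex $x$, the same observation that a $K_t$ through $x$ would force $t-1$ vertices from distinct parts whose indices span a $K_{t-1}$ in $H$, and the same codegree count. You are in fact slightly more careful than the paper in explicitly comparing the three pair-types and noting that cross-part pairs achieve the minimum codegree when $\delta \le m-3$ (which the paper asserts without remark); this holds in the intended regime $m \ge t-1$, where $K_{t-1}$-freeness forces $H \ne K_m$.
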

\begin{proof}
We build a $3$-graph $G$ on $n=Nm+1$ vertices as follows. Set $V=[n]$ and set aside a special vertex $x$. Partition $V\setminus \{x\}$ into $m$ sets $V_1, \dots, V_m$, each of size $N$. Set as the link graph of $x$ all pairs of vertices from distinct parts. For every triple $ijk \in E(H)$, add to $G$ all $3$-edges of the form $V_iV_jV_k$. Finally, add all triples of $V\setminus \{x\}$ that contain at least two vertices from one part.
The minimum codegree of $G$ is 
\[
\delta_2(G)= (\delta + 2) N - 1 \ge \frac{\delta + 2}{m} n -2. 
\] Now consider a $(t-1)$-set $S\subset V\setminus \{x\}$ that induce a $t$-clique in the link graph of $x$. By construction, these vertices must come from $t$ different parts of $V\setminus \{x\}$. Since $H$ is $K_{t-1}$-free, by our construction, some triple of $S$ is absent from $G$. Thus $S\cup\{x\}$ does not induce a copy of $K_{t}$ in $G$. Taking the limit as $n\rightarrow \infty$, the result follows.
\end{proof}

\begin{corollary}\label{prop: general bounds}
Let $t\ge 4$.
\begin{enumerate}
\item $c_2(K_t)\geq \frac{t-2}{t-1}$, and
\item $c_2(K_t)\geq \frac{2t-6}{2t-5}$ if $t\equiv 0, 1 \mod 3$.
\end{enumerate}
\end{corollary}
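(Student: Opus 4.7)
Both parts reduce to applying Proposition~\ref{prop: blow-up of a codegree construction}, and the question is merely to exhibit the correct $K_{t-1}$-free auxiliary 3-graph $H$ on $m$ vertices with minimum codegree $\delta$ such that $(\delta+2)/m$ matches the claimed bound.

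For part (1), the plan is to take $H = K_{t-1}^-$, the complete 3-graph on $t-1$ vertices with a single triple removed. Since $H$ has only $t-1$ vertices and is itself not $K_{t-1}$, it is trivially $K_{t-1}$-free. Every pair contained in the removed triple has codegree $t-4$, while any other pair has codegree $t-3$, so $\delta_2(H)=t-4$. Applying Proposition~\ref{prop: blow-up of a codegree construction} with $m=t-1$ and $\delta=t-4$ yields $c_2(K_t)\geq (\delta+2)/m = (t-2)/(t-1)$.

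For part (2), the hypothesis $t \equiv 0, 1 \bmod 3$ is equivalent to $2t-5 \equiv 1, 3 \bmod 6$, and so by Kirkman's theorem there exists a Steiner triple system $S$ on $n = 2t-5$ vertices. Let $H$ be the complement of $S$, i.e., the 3-graph on $[n]$ consisting of all triples that are not blocks of $S$. Since every pair lies in exactly one block of $S$, every pair has codegree exactly $n-3 = 2t-8$ in $H$. Provided that $H$ is $K_{t-1}$-free, Proposition~\ref{prop: blow-up of a codegree construction} gives the desired bound $c_2(K_t)\geq (2t-6)/(2t-5)$.

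The main technical step is thus to verify that $H$ is $K_{t-1}$-free; equivalently, that the independence number of $S$ (i.e., the size of the largest subset of $[n]$ containing no block) is at most $t-2 = (n+1)/2$. To prove this, let $I$ be an independent set in $S$ with $|I|=a$. For each $v \notin I$, the $(n-1)/2$ blocks through $v$ partition $[n]\setminus\{v\}$ into $(n-1)/2$ pairs, and at most $\lfloor a/2 \rfloor$ of these pairs can lie entirely inside $I$. Since each pair inside $I$ is completed to a block by a unique vertex outside $I$, summing over $v \notin I$ yields
\[
\binom{a}{2} \leq (n-a)\lfloor a/2 \rfloor.
\]
A short case analysis on the parity of $a$ (treating the cases $a=2k$ and $a=2k+1$ separately) then gives $a \leq (n+1)/2$, which is $t-2$ for $n=2t-5$. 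This completes the verification and hence the proof.
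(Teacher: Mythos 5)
Both parts use the same overall scheme as the paper: apply Proposition~\ref{prop: blow-up of a codegree construction} to a $K_{t-1}$-free 3-graph $H$. For Part~(1) your choice $H=K_{t-1}^-$ and the resulting computation coincide with the paper's exactly. For Part~(2) you again take $H=\overline{\mathcal{S}}$ for a Steiner triple system $\mathcal{S}$ on $2t-5$ points, but the justification that $H$ is $K_{t-1}$-free differs. The paper fixes a vertex $a$ in a putative block-free $(t-1)$-set $T$ and observes that the $t-2$ pairs through $a$ need distinct completion vertices outside $T$, which is impossible since only $t-4$ such vertices exist. You instead prove the stronger, more general fact that \emph{any} STS on $n$ points has independence number at most $(n+1)/2$, via the global double-count $\binom{a}{2}\le (n-a)\lfloor a/2\rfloor$ followed by a short parity case analysis. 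Your argument is correct (I verified the case split: $a=2k$ gives $4k\le n+1$, $a=2k+1$ gives $4k\le n-2$, so $a\le(n+1)/2$ in every case), and it buys a reusable independence-number bound for Steiner triple systems at the cost of a slightly longer calculation. The paper's local pigeonhole is shorter and tailored exactly to the threshold size $t-1$, but proves nothing beyond that specific instance. Either route is a valid proof of the corollary.
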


\begin{proof}
For Part~1, we apply Proposition~\ref{prop: blow-up of a codegree construction} with $H = K_{t-1}^-$ (thus $m= t-1$ and $\delta= t-4$) and obtain $c_2(K_t)\geq \frac{t-2}{t-1}$. 

For Part~2, since $t\equiv 0,1\mod 3$, we have $2t-5 \equiv 1,3 \mod 6$, whence there exists a Steiner triple system $\mathcal{S}$ on the vertex set $[2t-5]$. It is easy to see that every set $T \subset [2t-5]$ of $t-1$ vertices spans at least one triple from $\mathcal{S}$. Indeed, fix a vertex $a\in T$: all the pairs of $T$ containing $a$ must have distinct neighbours under $\mathcal{S}$ in $[2t-5]\setminus T$. Since $t-2> (2t-5)-(t-1)$, this is impossible. Therefore the complement $3$-graph $\overline{\mathcal{S}}$ is $K_{t-1}$-free. Applying Proposition~\ref{prop: blow-up of a codegree construction} with $H = \overline{\mathcal{S}}$ (thus $m= 2t-5$ and $\delta= 2t-8$), we obtain that $c_2(K_t)\geq \frac{2t-6}{2t-5}$.
\end{proof}

\begin{remark}
\begin{enumerate}
\item Combining Proposition~\ref{prop:Kt} and Corollary~\ref{prop: general bounds} gives that $\frac{3}{4} \leq c_2(K_5) \leq \frac{5}{6}$ and $\frac{6}{7} \leq c_2(K_6) \leq \frac{9}{10}$.
\item Theorem~\ref{theorem: K4 threshold} shows that the lower bound in Proposition~\ref{prop: general bounds} is tight in the case $t=4$. The bound is also tight in the trivial case $t=3$, since $c_2(n, K_3)=1=o(n)$. If this bound is tight in general, then we do not have stability for the covering codegree-threshold problem: while the $3$-edge $K_3$ and the Fano plane are the unique (up to isomorphism) Steiner triple systems on $3$ and $7$ vertices respectively, there are for example $11,084,874,829$ non-isomorphic Steiner triple systems on $19$ vertices (see~\cite[Section 4.5]{ColbournDinitz06}).
\end{enumerate}
\end{remark}

\section{Concluding remarks}\label{section: conclusion}
There are many questions arising from our work. To begin with, we may ask which of the fundamental properties of Tur\'an density and codegree density does the covering codegree density $c_2$ share. Explicitly:
\begin{enumerate}[1.]
\item do we have \emph{supersaturation}? That is, if $\delta_2(G)\geq c_2(n,F) +\varepsilon n$ for some fixed $\varepsilon>0$, is it the case that every vertex in $G$ is contained in $\Omega(n^{\vert V(F) \vert -1})$ copies of $F$?
\item do we have \emph{blow-up invariance}? Given a $3$-graph $F$, we define the blow-up $F(t)$ to be the $3$-graph on $V(F)\times [t]$ with $3$-edges $\{(u,i)(v,j)(w,k): \ uvw\in E(F), \ i,j,k\in[t]\}$. Is it the case that for every $F$ and every fixed $t$ we have $c_2(F)=c_2(F(t))$?
\item is the set of covering codegree densities $\{c_2(F): \ F \textrm{ a }3\textrm{-graph}\}$ dense in $[0,1]$, or does it have jumps? 
\end{enumerate}
The first two of these questions are addressed in a forthcoming work of the authors.
In addition there are some natural variants of the covering codegree threshold $c_2(n,F)$ which may be interesting. What if instead of covering every vertex by a copy of $F$ we wanted to cover every pair? What if we wanted instead to be able to extend every $3$-edge to a copy of $F$? It is not immediately clear whether the corresponding codegree-extremal functions behave similarly to $c_2(n,F)$ or not.

In a different direction, what if we asked for the threshold for covering all but at most $k$ vertices, for some $k\geq 1$? On the one hand, in the case of $C_5$ we observed in Remark~\ref{remark: c5 unstable} that this does not affect the value of the covering threshold very much. On the other hand, Theorem~\ref{theorem: K4 stability} implies that the threshold for covering all but at most $1$ vertex with a copy of $K_4$ is at most $\left({2}/{3}-c\right)n$ for some $c>0$ (therefore the problem is genuinely different from $c_2(n, K_4)$). Let us sketch a proof. Let $G$ be a 3-graph with $\delta_2(G)\ge \left({2}/{3}-c\right)n$ for some $c>0$ sufficiently small. Suppose that $x\in V(G)$ is not covered by any copy of $K_4$. By Theorem~\ref{theorem: K4 stability}, there is a partition $V_1\sqcup V_2\sqcup V_3$ of $V(G)\setminus \{x\}$ satisfying (i)--(v). If another vertex $y$ is not covered by any copy of $K_4$, then there is a partition $V'_1\sqcup V'_2\sqcup V'_3$ of $V(G)\setminus \{y\}$ satisfying (i)-(v) as well. Because of (iii) and (iv), these two partitions essentially coincide. 
Now consider $\Gamma(x,y)$, which has size at least $(2/3-c) n$. There are about $(2/3 - c) (1/3 - 3c) n^2/2 $ pairs $u, v\in \Gamma(x, y)$ coming from different parts of $V_1\cap V'_1, V_2\cap V'_2, V_3\cap V'_3$. Since $(2/3 - c) (1/3 - 3c) n^2/2 > 2 \cdot 10c n^2$ (for $c$ sufficiently small), by (ii), there exists a pair $u,v\in \Gamma(x, y)$ such that both $uvx$ and $uvy$ are edges of $G$. This implies that $\{u,v,x,y\}$ spans a copy of $K_4$, a contradiction. The authors note that the bound on $c$ given by this argument can be significantly improved; this is the subject of future work.

Finally, it would be interesting to determine the value of $c_2(F)$ when $F$ is the Fano plane or $F_{3,2}$, and to have if not a tight result then at least a reasonable guess as to the value of $c_2(K_t)$ for $t\geq 5$. An investigation of $c_1(n, F)$ when $F=K_4^-$ and $F=K_4$ would also be desirable.

We should note here that for such small $3$-graphs $F$ the problem of proving upper bounds for $c_1$ or $c_2$ should be amenable to flag algebra computations by following the approach of~\cite{FalgasRavryMarchantPikhurkoVaughan15} to encode the minimum degree/codegree constraint. Note however that one will need to do computation with non-uniform hypergraphs, containing a mixture of $2$-edges (from the link graph of an uncovered vertex $x$) and $3$-edges.

\section{Acknowledgements}
The authors thank Jie Han and Allan Lo for pointing out the alternative lower bound construction for Theorem~\ref{theorem:  c5}. The first author is also grateful for the hospitality of Georgia State University during a research visit which led to the present paper.

\bibliographystyle{plain}
\bibliography{codegreecoveringbiblio}

\section{Appendix: proof of Theorem~\ref{theorem: K4 stability} and Theorem~\ref{theorem: extremal configurations}}
\begin{proof}[Proof of Theorem~\ref{theorem: K4 stability}]
We run through the proof of the case $n=3m+2$ of Theorem~\ref{theorem: K4 stability}, replacing the codegree assumption $\delta_2(G)=2m-2$ by the assumption $\delta_2(G)\geq \frac{2n}{3}-\delta n$. This gives us new versions of our claims and lemmas with error terms involving $\delta n$, and conditions on $n$ being sufficiently large replaced by conditions on $\delta$ being sufficiently small. For the sake of completeness, we derive them below.

Let $G$ be a $3$-graph on $n$ vertices with $\delta_2(G)\geq \left(\frac{2}{3}-\delta\right)n$, for some $\delta$: $0<\delta \leq 1/429$. Suppose there is a vertex $x$ of $G$ not contained in any copy of $K_4$. As $2/3-\delta > 1/2$, the link graph $G_x$ contains a triangle $\{ab, bc, ac\}$. Set $S=\{a,b,c,x\}$ and for each $y\in V\setminus S$, define $S_y$ as in Lemma~\ref{lem:Sy}. By Lemma~\ref{lem:Sy}, $S_y$ is a subset of $S^{1, c}, S^{1,b}$, $S^{1,a}$, $S^{2,a}$, $S^{2,b}$, $S^{2,c}$ or $S^3$. 
	For $i\in\{1,2\}$ and $j\in\{a,b,c\}$, write $s_{i,j}$ for the number of vertices $y \in V\setminus S$ for which $S_y = S^{i,j}$, and write $s_i$ for the sum $s_{i,a}+s_{i,b}+s_{i,c}$. Finally let $s_0$ be the number of vertices $y\in V\setminus S$ such that $S_y \neq S^{i,j}$ for any $i\in \{1,2\}$ and $j\in\{a,b,c\}$. Note that $|S_y|\le 3$ for such $y$. We know that $s_1+s_2+s_0 = n-4$. Furthermore, by the codegree assumption, 
	\begin{equation}\label{ax-bx-cx new}
	3\, \left(\frac{2n}{3} -\delta n\right)  \leq d(a,x)+ d(b,x)+ d(c,x)\leq 2s_1+s_2+3s_0 +6,
	\end{equation}
	\begin{equation}\label{ax-bx-cx-ab-bc-ac new}
	6\, \left(\frac{2n}{3} -\delta n\right) \leq d(a,x)+ d(b,x)+ d(c,x)+d(a,b)+d(b,c)+d(c,a) \leq 4s_1+4s_2 +3s_0 +9,
	\end{equation}
	Substituting $s_0=n-4-s_1-s_2$ into (\ref{ax-bx-cx new}) and  (\ref{ax-bx-cx-ab-bc-ac new}) yields that $s_1+2s_2 \le n+3\delta n -6$ and $s_1+s_2  \geq n-6\delta n +3$, respectively. Combining the two inequalities we have just obtained, we get
	\begin{align*}
	s_2 \leq 9\delta n -9 \quad \text{and} \quad s_1 \geq n- 15\delta n +12.
	\end{align*}
	We now show as before that the weight of $s_1$ splits almost equally between $s_{1,a}$, $s_{1,b}$, $s_{1,c}$. Note that
	\begin{align*}
	\frac{2n}{3}-\delta n \leq d(b,c)\leq n-3-s_{1,a}, 
	\end{align*}
	from which it follows that $s_{1,a}\leq \frac{n}{3}+\delta n -3$. Similarly we derive that $s_{1,b,} s_{1,c}\le \frac{n}{3}+\delta n -3$. Consequently
	\begin{align*} 
	s_{1,a}=s_1-s_{1,b}-s_{1,c} &\geq n- 15\delta n +12 -2\left(\frac{n}{3}+\delta n-3\right) =\frac{n}{3}-17\delta n +18.
	\end{align*}
	Similarly $s_{1,b}$ and $s_{1,c}$ satisfy the same lower bound. Set $A=\{y\in V\setminus S:\ S_y= S^{1,a}\}\cup \{a\}$, $B=\{y\in V\setminus S:\ S_y= S^{1,b}\}\cup\{b\}$ and $C=\{y\in V\setminus S:\ S_y= S^{1,c}\}\cup \{c\}$.  Set $V'=A\cup B\cup C\cup \{x\}$. The calculations above have established the following lemma.

\begin{lemma}[New version of Lemma~\ref{lemma: A,B,C cover almost everything}]\label{lemma: A,B,C cover almost everything new version}
		\[\vert V'\vert \geq n-15\delta n +16, \quad \text{and} \quad 
		\frac{n}{3}-17\delta n +19 \le \vert A\vert, \vert B\vert, \vert C\vert \le \frac{n}{3}+\delta n -2. \qed
		\]
	\end{lemma}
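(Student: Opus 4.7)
The plan is to derive this lemma directly from the sequence of codegree inequalities obtained in the preceding paragraphs, which are themselves the stability-version analogues of the inequalities appearing in the proof of the case $n = 3m + 2$ of Theorem~\ref{theorem: K4 threshold}. Since the inputs have all been assembled, the lemma is really a bookkeeping statement; there is no additional combinatorial content to introduce.

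Concretely, I would unwind the chain of bounds as follows. Summing the codegree assumption across the three pairs $\{a,x\}, \{b,x\}, \{c,x\}$ and then across all six pairs of $S = \{a,b,c,x\}$ gives the linear system $s_1 + 2 s_2 \le n + 3\delta n - 6$ and $s_1 + s_2 \ge n - 6\delta n + 3$. Subtracting these yields $s_2 \le 9\delta n - 9$ and $s_1 \ge n - 15\delta n + 12$. The bound $(2/3 - \delta)n \le d(b,c) \le n - 3 - s_{1,a}$ --- obtained by noting that only $y$ with $bc \in S_y$, i.e.\ $y \in A \setminus \{a\}$, fail to contribute to $d(b,c)$ --- gives $s_{1,a} \le n/3 + \delta n - 3$, and symmetrically for $s_{1,b}, s_{1,c}$; feeding these back into $s_{1,a} = s_1 - s_{1,b} - s_{1,c}$ produces $s_{1,a} \ge n/3 - 17\delta n + 18$.

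The lemma now drops out in one step. Since $A = \{y \in V \setminus S : S_y = S^{1,a}\} \cup \{a\}$ has cardinality $s_{1,a} + 1$, the bounds $n/3 - 17\delta n + 19 \le |A| \le n/3 + \delta n - 2$ follow, and the same estimates for $|B|$ and $|C|$ by symmetry. Combining, $|V'| = 1 + |A| + |B| + |C| = s_1 + 4 \ge n - 15\delta n + 16$.

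The only point requiring mild care is the tracking of the additive integer constants through each inequality, because these propagate into the Claim~\ref{claim: all bad vx degrees small}--\ref{claim: no bad vx triples} analogues that come later in the stability proof, and the numerical threshold $\delta \le 1/429$ in the hypothesis of Theorem~\ref{theorem: K4 stability} is presumably calibrated to the tightest such propagation. There is no genuine obstacle at the level of the lemma itself; the essential work has already been done in setting up the partition $\{S_y\}$ via Lemma~\ref{lem:Sy} and in the preceding codegree counts.
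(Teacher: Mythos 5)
Your proposal is correct and mirrors the paper's argument exactly: the same $S_y$-decomposition, the same pair of linear inequalities in $s_1, s_2$, the same $d(b,c)$ bound to split $s_1$, and then $|A| = s_{1,a}+1$ and $|V'| = s_1 + 4$. One small slip in phrasing: $y \in A\setminus\{a\}$ has $S_y = S^{1,a}$, which does \emph{not} contain $bc$, so these are the vertices with $bc \notin S_y$ that fail to contribute to $d(b,c)$ (you wrote ``$bc \in S_y$''), but the bound $s_{1,a} \le n/3 + \delta n - 3$ you extract is correct.
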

		Let $\mathcal{B}$ be the collection of $3$-edges of $G$ of the form $xAA, xBB, xCC$ (the `bad' triples). Let $\mathcal{M}$ be the 
	collection of non-edges of $G$ of the form $xAB, xAC, xBC$ (the `missing' triples). Viewing $\mathcal{B}$ and $\mathcal{M}$ as $3$-graphs on $V'$, for two distinct vertices $v_1, v_2\in V'$, we let $d_{\mathcal{B}}(v_1,v_2)$ denote their codegree in $\mathcal{B}$ and $d_{\mathcal{M}}(v_1,v_2)$ their codegree in $\mathcal{M}$. 
	\begin{claim}[New version of Claim~\ref{claim: all bad vx degrees small}]\label{claim: all bad vx degrees small new}
		For every $v\in V'\setminus \{x\}$, $d_{\mathcal{B}}(v,x) \leq 18\delta n -20$.	
	\end{claim}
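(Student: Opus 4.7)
The plan is to rerun the argument for Claim~\ref{claim: all bad vx degrees small} verbatim, retaining every structural deduction (all of which rely only on $x$ being uncovered by a $K_4$ and on the definitions of $A$, $B$, $C$, $\mathcal{B}$), and substituting the two new quantitative inputs: $\delta_2(G) \geq (2/3 - \delta) n$ in place of $(2n-4)/3$, and the lower bound $\vert V' \vert \geq n - 15 \delta n + 16$ from Lemma~\ref{lemma: A,B,C cover almost everything new version} in place of $\vert V' \vert \geq n - 4$.

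First I would reduce to $v \in A$ by symmetry and dispense with the trivial case $v = a$, for which $d_{\mathcal{B}}(v,x) = 0$ since no triple of the form $xaA$ can be in $\mathcal{B}$. For $v \in A \setminus \{a\}$, the structural backbone of the proof transfers unchanged: any bad triple $a'vx \in \mathcal{B}$ with $a' \in A \setminus \{a,v\}$ forces both $a'vb, a'vc \notin E(G)$ (otherwise $\{a', v, b, x\}$ or $\{a', v, c, x\}$ would span a $K_4$ through $x$); any $c' \in C \cap \Gamma(v,x)$ forces $bc'v \notin E(G)$; any $b' \in B$ admits at most one of $b'cv, b'xv$ in $E(G)$; and since $bcv \notin E(G)$, at most one of $b, c$ can lie in $\Gamma(x,v)$. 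Putting these together, every $y \in V'$ contributes at most $2$ to the sum $d(b,v) + d(c,v) + d(x,v)$, dropping to exactly $1$ for $y \in \Gamma_{\mathcal{B}}(x,v)$ and to at most $1$ for $y \in \{b, c, v\}$.

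Summing and applying the new codegree hypothesis yields
\[
3 \left( \frac{2}{3} - \delta \right) n \leq d(b,v) + d(c,v) + d(x,v) \leq 2\vert V'\vert - d_{\mathcal{B}}(v,x) - 4 + 3(n - \vert V'\vert),
\]
where the $-4$ accounts for $v$ contributing $0$ instead of $2$ (loss of $2$) and each of $b, c$ contributing at most $1$ instead of $2$ (loss of $1+1=2$). Simplifying the right-hand side to $3n - \vert V'\vert - 4 - d_{\mathcal{B}}(v,x)$ and substituting the bound $\vert V'\vert \geq n - 15 \delta n + 16$ gives
\[
2n - 3 \delta n \leq 2n + 15 \delta n - 20 - d_{\mathcal{B}}(v,x),
\]
from which $d_{\mathcal{B}}(v,x) \leq 18 \delta n - 20$, as claimed.

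The proof is essentially bookkeeping: no new structural ideas are needed. The only subtlety to watch is that the slack $3(n - \vert V'\vert)$ produced by the weaker bound on $\vert V'\vert$ must be balanced against the codegree deficit, both of order $\delta n$; the coefficients conspire so that $15 \delta n$ from $\vert V'\vert$ plus $3 \delta n$ from the codegree sum to precisely $18 \delta n$, giving the claimed error term.
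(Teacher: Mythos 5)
Your proposal matches the paper's proof essentially line by line: the same structural deductions forcing $a'vb, a'vc \notin E(G)$ for bad $a'$, the same constraints on $B$ and $C$ and on $\{b,c,v\}$, the same inequality $d(b,v)+d(c,v)+d(x,v) \leq 2\vert V'\vert - d_{\mathcal{B}}(v,x) - 4 + 3(n-\vert V'\vert)$, and the same substitution of $\vert V'\vert \geq n-15\delta n+16$ and $\delta_2(G)\geq(2/3-\delta)n$ to get $18\delta n - 20$. Your accounting for the $-4$ (loss of $2$ from $v$ and $1+1$ from $b,c$) is in fact a bit more explicit than the paper's own phrasing and is correct.
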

	\begin{proof}
			Suppose without loss of generality that $v\in A$.  If $v=a$, then $d_{\mathcal{B}}(v,x)=0$ because $G$ contains no 3-edges of the form $x a A$. We thus assume that $v\ne a$. The bad triples for the pair $(v,x)$ are triples of the form $a'vx$ for $a'\in A\setminus \{a,v\}$. Suppose $a'vx\in \mathcal{B}$. Then since there is no $K_4$ in $G$ containing $x$, and since, by the definition of $A$, $a'bx$, $vbx$, $a'cx$ and $vcx$ are all in $G$, it must be the case that both of $a'vb$ and $a'vc$ are missing from $G$. 
			Further if $c'\in C\cap \Gamma(v,x)$ then all of $c'vx, bvx, c'bx$ are in $G$, whence $bc'v$ is absent from $G$. Similarly for any $b'\in B$, at most one of $b'cv$, $b'xv$ is in $G$. Finally since $bc v\not\in E(G)$, $b$ and $c$ are contained in exactly one of $\Gamma(b, v)$, $\Gamma(c, v)$, and $\Gamma(x, v)$. To summarize, a vertex $y$ in $V'$ can lie in at most two of $\Gamma(b,v)$, $\Gamma(c,v)$ and $\Gamma(x,v)$ unless $y$ is in $\Gamma_{\mathcal{B}}(x,v)$ (and lies in exactly one of those joint neighbourhoods) or is in $\{b,c,v\}$ (and lies in at most one of those joint neighbourhoods).
			Together with our codegree assumption, this gives us
			\begin{align*}
			3\, \left(\frac{2n}{3}-\delta n\right) &\leq d(b,v)+ d(c,v)+d(x,v)\leq 2\vert V'\vert-d_{\mathcal{B}}(v,x)-4  +3(n-\vert V'\vert)\\&= 3n -\vert V'\vert- 4 -d_{\mathcal{B}}(v,x) \leq 2n +15\delta n-20- d_{\mathcal{B}}(v,x),
			\end{align*}
			where we apply $\vert V'\vert\ge n -15\delta n +16$ from Lemma~\ref{lemma: A,B,C cover almost everything new version}
	 in the last inequality. It follows that $d_{\mathcal{B}}(v,x)\leq 18\delta n -20$, as claimed.		
		\end{proof}

\begin{claim}[New version of claim~\ref{claim: all missing vx degrees small}]\label{claim: all missing vx degree small new}
		For every $v\in V'\setminus \{x\}$, $d_{\mathcal{M}}(v,x) \leq 36\delta n-40$.
	\end{claim}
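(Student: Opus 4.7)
The plan is to mirror the proof of Claim~\ref{claim: all missing vx degrees small} exactly, just tracking the error terms produced by the weaker codegree hypothesis. Without loss of generality assume $v\in A$. Partition the vertices counted by $d(v,x)$ as those lying in $V\setminus V'$, those in $A\setminus\{v\}$, and those in $B\cup C$. The first of these contributes at most $n-|V'|$ vertices. The second contributes exactly $d_{\mathcal{B}}(v,x)$ (since an edge $xvy$ with $y\in A$ is by definition a bad triple). The third contributes $|B|+|C|-d_{\mathcal{M}}(v,x)$, since by definition of $\mathcal{M}$ the non-edges $xvy$ with $y\in B\cup C$ are exactly the missing triples through the pair $(v,x)$.

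Assembling this and using $|V'|=|A|+|B|+|C|+1$, we obtain
\[
d(v,x)\leq (n-|V'|)+d_{\mathcal{B}}(v,x)+|B|+|C|-d_{\mathcal{M}}(v,x)=n-1-|A|+d_{\mathcal{B}}(v,x)-d_{\mathcal{M}}(v,x),
\]
exactly as in the original argument. Now I would plug in the lower bound $d(v,x)\geq (2/3-\delta)n$ from the codegree hypothesis, the lower bound $|A|\geq n/3-17\delta n+19$ from Lemma~\ref{lemma: A,B,C cover almost everything new version}, and the upper bound $d_{\mathcal{B}}(v,x)\leq 18\delta n-20$ from Claim~\ref{claim: all bad vx degrees small new}.

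Rearranging the resulting inequality, the main $n$ terms cancel as $n-1-n/3-(2/3)n\cdot n = -1$, and collecting the $\delta n$ and constant contributions yields $d_{\mathcal{M}}(v,x)\leq (17\delta n-19)+(18\delta n-20)+\delta n\cdot 1 -1 = 36\delta n-40$, which is the desired bound. The only thing to be careful about is that all three estimates being combined are the \emph{new} stability versions with $\delta n$-sized error terms, not the original constants, and that the direction of the inequalities lines up so that the errors accumulate additively; no genuinely new idea is required, so I do not expect any real obstacle here. The proof is essentially a three-line computation once the inputs are substituted.
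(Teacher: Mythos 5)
Your proposal is correct and follows the paper's proof essentially verbatim: both use the decomposition $d(v,x)\leq n-1-|A|+d_{\mathcal B}(v,x)-d_{\mathcal M}(v,x)$ and then substitute the new stability bounds on $|A|$ and $d_{\mathcal B}(v,x)$ together with the codegree lower bound. The only blemish is a typo in the line where you write the cancellation of the main terms as $n-1-n/3-(2/3)n\cdot n=-1$ (the extraneous factor $n$ should not be there); the intended identity $n-1-\tfrac{n}{3}-\tfrac{2n}{3}=-1$ and the resulting bound $36\delta n-40$ are both correct.
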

	\begin{proof}
			Suppose without loss of generality that $v\in A$. Then by the codegree assumption, Claim~\ref{claim: all bad vx degrees small new} and the bound on $\vert A\vert$ from Lemma~\ref{lemma: A,B,C cover almost everything new version} we have
			\begin{align*}
			\frac{2n}{3}-\delta n  \leq d(v,x) & \leq n-1-\vert A\vert +d_{\mathcal{B}}(v,x)-d_{\mathcal{M}}(v,x) \\
			& \leq n-1 - \frac{n}{3}+17\delta n -19  +18\delta n -20 - d_{\mathcal{M}}(v,x),
			\end{align*}
			which gives that $d_{\mathcal{M}}(v,x)\leq 36\delta n -40$ as claimed.
		\end{proof}

\begin{claim}[New version of Claim~\ref{claim: no bad vx triples}]\label{claim: no bad vx triples new}
		Provided $\delta \leq \frac{1}{429}$, for every $y \in V(G)\setminus \{x\}$, $\Gamma(y,x)$ has a non-empty intersection with at most two of the parts $A$, $B$ and $C$.
	\end{claim}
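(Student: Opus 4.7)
The plan is to follow the structure of the original proof of Claim~\ref{claim: no bad vx triples}, systematically replacing the constants by the new error terms coming from the codegree assumption $\delta_2(G) \geq (2/3 - \delta)n$ and the updated versions of Lemma~\ref{lemma: A,B,C cover almost everything new version} and Claim~\ref{claim: all missing vx degree small new}.

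Let $y \in V(G) \setminus \{x\}$ and set $A_y = A \cap \Gamma(x,y)$, $B_y = B \cap \Gamma(x,y)$, $C_y = C \cap \Gamma(x,y)$. Arguing by contradiction, suppose none of $A_y$, $B_y$, $C_y$ is empty. Fix $a' \in A_y$. As in the original argument, for every $b' \in B_y \cap \Gamma(a',x)$ and every $c' \in C_y \cap \Gamma(a',x)$ we must have $a'b'y \notin E(G)$ and $a'c'y \notin E(G)$ (else $\{a', b', x, y\}$ or $\{a', c', x, y\}$ would span a $K_4$ covering $x$). Hence the codegree assumption gives
\[
\tfrac{2n}{3} - \delta n \leq d(a', y) \leq n - 2 - |B_y \cap \Gamma(a', x)| - |C_y \cap \Gamma(a', x)|.
\]
Using Claim~\ref{claim: all missing vx degree small new}, namely $d_{\mathcal{M}}(a', x) \leq 36 \delta n - 40$, the right-hand side is at most $n - 2 - |B_y| - |C_y| + 36 \delta n - 40$, so that
\[
|B_y| + |C_y| \leq \tfrac{n}{3} + 37 \delta n - 42.
\]
Applying the same reasoning with some $b' \in B_y$ and some $c' \in C_y$ in the role of $a'$ yields the analogous bounds on $|A_y| + |C_y|$ and $|A_y| + |B_y|$.

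Summing the three inequalities and dividing by $2$,
\[
|A_y| + |B_y| + |C_y| \leq \tfrac{n}{2} + \tfrac{111}{2} \delta n - 63.
\]
Combining this with the codegree lower bound on $d(x, y)$ and with $|V'| \geq n - 15\delta n + 16$ from Lemma~\ref{lemma: A,B,C cover almost everything new version} gives
\[
\tfrac{2n}{3} - \delta n \leq d(x, y) \leq |A_y| + |B_y| + |C_y| + (n - |V'|) \leq \tfrac{n}{2} + \tfrac{141}{2}\delta n - 79.
\]
Rearranging yields $\tfrac{n}{6} \leq \tfrac{143}{2}\delta n - 79$, which fails provided $\delta \leq 1/429$ and $n$ is at least the constant implicit in the $-79$ term; the constants in Theorem~\ref{theorem: K4 stability} are chosen precisely so that this yields the desired contradiction. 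Hence at most two of $A_y$, $B_y$, $C_y$ can be non-empty, as claimed.

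The only place where conceptual care is needed is in tracking the dependence of the error terms on $\delta n$ through Claim~\ref{claim: all bad vx degrees small new} and Claim~\ref{claim: all missing vx degree small new}; everything else is a mechanical adaptation of the $n = 3m + 2$ argument. The main obstacle is bookkeeping, in particular ensuring that the cumulative additive constants (from the $+16$, $-40$, etc. in the preceding claims) are compatible with the chosen threshold $\delta \leq 1/429$ that is declared in the statement.
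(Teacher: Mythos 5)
Your proof is correct and follows the paper's argument step for step, with the same inequalities and the same final bound. One small remark: the hedge "and $n$ is at least the constant implicit in the $-79$ term" is unnecessary — rearranging your final inequality to $\frac{(1-429\delta)n}{6} \leq -79$ shows the left side is nonnegative for $\delta \leq 1/429$ while the right side is negative, so the contradiction holds for every $n$ with no additional lower bound on $n$.
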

	\begin{proof}
			Let $y \in V(G)\setminus \{x\}$. Set $A_y=A\cap\Gamma(x,y)$, $B_y=B\cap \Gamma(x,y)$ and $C_y=C\cap 	\Gamma(x,y)$.  Suppose none of $A_y$, $B_y$, $C_y$ is empty. Fix $a'\in A_y$. For $b'\in B_y$, if $b'\in 		\Gamma(a', x)$, then $a' b' y\not\in E(G)$ -- otherwise $\{a', b', x, y\}$ spans a copy of $K_4$. Similarly, for $c'\in 	C_y\cap \Gamma(a', x)$, we have $a' c' y\not\in E(G)$. Hence, 
			\begin{align*}
			\frac{2n}{3}-\delta n \leq d(a',y) &\leq  n-2 - \vert B_y \cap \Gamma(a', x)\vert - \vert C_y \cap \Gamma(a', x)\vert.
			\end{align*}		
		Claim~\ref{claim: all missing vx degree small new} gives that $d_{\mathcal{M}}(a',x)\leq 36\delta n-40$.  Consequently,
			\[
			| B_y \cap \Gamma(a', x)| + | C_y \cap \Gamma(a', x) | = |B_y| + |C_y| - d_{\mathcal{M}}(a',x) \ge |B_y| + |C_y| - 36\delta n+40
			\]
			This implies that	
			\[
			\frac{2n}{3}-\delta n \leq  n-2 -\vert B_y\vert - \vert C_y\vert + 36\delta n-40, 
			\]
			which  yields $\vert B_y\vert +\vert C_y\vert \leq \frac{n}{3} +37\delta n-42$. Similarly by considering any vertex $b'\in B_y$ and any vertex $c'\in C_y$ we obtain that 
			\begin{align*}
			\vert A_y \vert +\vert C_y\vert \leq \frac{n}{3}+37\delta n -42 \quad  \text{and} \quad \vert A_y \vert +\vert B_y\vert \leq \frac{n}{3} + 37\delta n -42.
			\end{align*}
			Summing these three inequalities and dividing by $2$, we obtain that
			\[
			\vert A_y \vert + \vert B_y\vert + \vert C_y \vert \leq \frac{n+111\delta n-126}{2}.
			\]
			Furthermore, by the codegree condition, 
			\[
			\frac{2n}{3}-\delta n \leq d(x,y) \leq \vert A_y\vert+\vert B_y\vert + \vert C_y\vert  + \left(n-\vert V'\vert\right)
			\leq \frac{n+141\delta n-158}{2},
			\]
			where we apply $\vert V'\vert\ge n -15\delta n +16$ from Lemma~\ref{lemma: A,B,C cover almost everything new version}. Rearranging terms yields $\frac{(1-429\delta)n}{6} \leq -\frac{158}{2}$, which is a contradiction as $\delta \leq 1/429$.
		\end{proof}
			Set $V_1= \{y \in V\setminus \{x\}: \ \Gamma(x,y)\cap A=\emptyset \}$, $V_2=\{y \in V\setminus \{x\}: \ \Gamma(x,y)\cap B=\emptyset \}$ and $V_3 =\{y \in V\setminus \{x\}: \ \Gamma(x,y)\cap C=\emptyset \}$. 
			Without loss of generality, assume that $|V_1| \le |V_2|\le |V_3|$.
		Claim~\ref{claim: no bad vx triples new} shows that $V_1\cup V_2\cup V_3$ covers $V(G)\setminus \{x\}$. We now show that in fact $V_1, V_2, V_3$ are pairwise disjoint, and $A\subseteq V_1$, $B\subseteq V_2$, and $C\subseteq V_3$.  
		Suppose instead that there exists $y\in V_1\cap V_2$. Then $\Gamma(x,y)\cap (A\cup B) =\emptyset$. By the codegree condition and Lemma~\ref{lemma: A,B,C cover almost everything new version}, 
		\[
		\frac{2n}{3}-\delta n \leq d(x,y) \leq \vert C_y\vert  + \left(n-\vert V'\vert\right) \le \frac{n}{3}+\delta n -2 +15\delta n-16=\frac{n}{3} +16\delta n -18.
		\]	
		Rearranging terms yields $\frac{(1 -51\delta)n}{3}\leq -18$, which for $\delta \leq 1/51$ is a contradiction.

		Furthermore, consider $a'\in A$. By Claim~\ref{claim: all missing vx degree small new}, $a'xv\in E(G)$ for all but at most $36\delta n-40$ vertices $v\in B\cup C$. By Lemma~\ref{lemma: A,B,C cover almost everything new version},
			\begin{align*}
			\vert B \vert -36\delta +40 \geq \frac{(1-159\delta)n}{3} +59
			\end{align*}
			which is strictly positive when $\delta \leq 1/159$. Thus we have that $\Gamma(a',x)$ has a non-empty intersection with $B$; similarly we have that $\Gamma(a',x)\cap C\neq \emptyset$, from which we can finally deduce by Claim~\ref{claim: no bad vx triples new} that $\Gamma(a',x)\cap A=\emptyset$ and that $A\subseteq V_1$. Similarly we have $B\subseteq V_2$ and $C\subseteq V_3$. 
			
	We claim that 			
			\begin{equation} \label{eq:Viup}
			\forall i\in \{1,2,3\}, \quad \vert V_i\vert \leq n/3+\delta n -1
			\end{equation}
	Indeed, let $c' \in C$. By the definition of $V_3$, we have $\Gamma(c', x)\subseteq V_1\cup V_2$. By the codegree assumption, it follows that
			\begin{align*} 
			\frac{2n}{3}-\delta n \leq d(c', x)& \leq \vert V_1\vert +\vert V_2\vert = n-1 -\vert V_3\vert, 
			\end{align*}
	from which we get that $\vert V_3\vert \leq n/3+\delta n -1 $, as claimed.  By \eqref{eq:Viup}, we have $\vert V_i\vert=n-1- \bigcup_{j\ne i} \vert V_{j}\vert \geq n/3 -2\delta n +1$ and consequently,
	\[
	\frac43 - 2\delta n \le |V_i| - \frac{n-1}3 \le  \delta n - \frac23.
	\]
	This gives Part (v) of Theorem~\ref{theorem: K4 stability}.
					
		\begin{claim}[New version of Claim~\ref{claim: all missing x degrees small}]\label{newclaim: missing x degrees small, bad x degree zero}
		Let $y\in V_i$. Then provided $\delta\leq \frac{1}{429}$, $\Gamma(y, x)$ contains all but at most $18\delta n -18$  vertices from $\bigcup_{j\ne i} V_j$ and no vertex from $V_{i}$.
	\end{claim}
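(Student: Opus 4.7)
The proof will closely mirror that of Claim~\ref{claim: all missing x degrees small}, with each $O(1)$ error term of the original replaced by an $O(\delta n)$ term, and the threshold $n > 38$ replaced by an upper bound on $\delta$. Assume without loss of generality that $y \in V_1$; then Claim~\ref{claim: no bad vx triples new} gives $A \cap \Gamma(y, x) = \emptyset$.

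For the first part, I would bound $|V_1 \setminus A| \leq n - |V'| \leq 15\delta n - 16$ via Lemma~\ref{lemma: A,B,C cover almost everything new version}, write
$$
(2/3 - \delta)n \leq d(x, y) \leq |\Gamma(x,y) \cap (V_2 \cup V_3)| + (15\delta n - 16),
$$
and rearrange to get $|\Gamma(x, y) \cap (V_2 \cup V_3)| \geq (2n/3) - 16\delta n + 16$. Combining with $|V_2 \cup V_3| \leq 2(n/3 + \delta n - 1)$ from part (v) of the theorem yields the desired upper bound of $18\delta n - 18$ on the size of $(V_2 \cup V_3) \setminus \Gamma(y, x)$.

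For the second part, argue by contradiction: if some $y' \in V_1$ satisfies $y y' x \in E(G)$, then $\{x, y, y'\}$ cannot extend to a $K_4$, so $\Gamma(y, y') \cap \Gamma(y, x) \cap \Gamma(y', x) = \emptyset$. Hence
$$
d(y, y') \leq 1 + (|V_1| - 2) + |(V_2 \cup V_3) \setminus (\Gamma(y, x) \cap \Gamma(y', x))|.
$$
Using $|V_1| \leq n/3 + \delta n - 1$ from (v) and applying the first part of the present claim twice to bound the last term by $2(18\delta n - 18)$, I obtain $d(y, y') \leq n/3 + 37\delta n - 38$. Combined with $d(y, y') \geq (2/3 - \delta)n$, this gives $(1 - 114\delta)n \leq -114$, which contradicts the hypothesis $\delta \leq 1/429$ (since then $1 - 114\delta > 0$).

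The main obstacle here is not conceptual but purely arithmetic bookkeeping: the challenge is to verify that the accumulated constants from the various applications of the new Lemma and Claims remain compatible with the uniform bound $\delta \leq 1/429$ already imposed by Claim~\ref{claim: no bad vx triples new}, so that no new restriction on $\delta$ need be introduced at this stage.
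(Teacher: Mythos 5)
Your proof follows the paper's argument step for step; both parts are structured identically and the arithmetic you carry out is the same. The one inaccuracy is in your citation: the bound $\vert V_i\vert\le n/3+\delta n-1$ that you use in both halves of the argument does \emph{not} follow from part (v) of Theorem~\ref{theorem: K4 stability}, which only asserts $\bigl\vert\vert V_i\vert-\tfrac{n-1}{3}\bigr\vert\le 2\delta n$, i.e.\ $\vert V_i\vert\le\tfrac{n-1}{3}+2\delta n$. That weaker bound yields $20\delta n-\tfrac{50}{3}$ in the first part rather than the claimed $18\delta n-18$, so the proof with that input would not establish the claim as stated. The tighter bound is derived directly in the paper (equation \eqref{eq:Viup}) before the claims are proved: since by Claim~\ref{claim: no bad vx triples new} we have $\Gamma(v,x)\subseteq V_1\cup V_2$ for any $v\in V_3$, the codegree condition gives $\tfrac{2n}{3}-\delta n\le d(v,x)\le\vert V_1\vert+\vert V_2\vert = n-1-\vert V_3\vert$, so $\vert V_3\vert\le n/3+\delta n-1$, and the WLOG ordering extends this to all $V_i$. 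Part (v) is in fact a corollary of this sharper inequality, and it is the sharper inequality, not part (v), that the proof of this claim needs. With that one source corrected, your argument is exactly the paper's.
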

		\begin{proof}
			Suppose without loss of generality that $y\in V_1$. Then by Claim~\ref{claim: no bad vx triples new}, $A\cap \Gamma(y,x)=\emptyset$. Thus
			\begin{align*}
			\frac{2n}{3}-\delta n \leq d(x,y)\leq \vert \Gamma(x,y)\cap \left(V_2\cup V_3\right)\vert + \vert \Gamma(x,y) \cap \left(V_1\setminus A\right)\vert
			\leq \vert \Gamma(x,y)\cap \left(V_2\cup V_3\right)\vert +15\delta n -16
			\end{align*}
			since $\vert V_1\setminus A \vert \le n - \vert V'\vert\le 15\delta n -16$ by Lemma~\ref{lemma: A,B,C cover almost everything new version}. Hence $\vert \Gamma(x,y)\cap \left(V_2\cup V_3\right)\vert \ge \frac{2n}{3} - 16\delta n +16$. By \eqref{eq:Viup}, 
			\[
			| \left(V_2\cup V_3\right) \setminus \Gamma(x,y) | \le 2 \left(\frac{n}{3}+ \delta n - 1 \right) - \left(\frac{2n}{3} - 16\delta n +16 \right) = 18\delta n -18.
			\]
			This establishes the first part of our claim. 
					
			For the second part of our claim (namely that $\Gamma(y,x)\cap V_1=\emptyset$), suppose that  $yy'x\in E(G)$ for some $y'\in V_1$. Then $\Gamma(y, y') \cap \Gamma(y,x)\cap \Gamma(y',x) = \emptyset$. Consequently,
			\begin{align*}
			\frac{2n}{3}-\delta n \leq d(y, y') &\leq 1 + \vert V_1\vert-2 + \left\vert\left( V_2\cup V_3\right)\setminus\left(\Gamma(y,x)\cap \Gamma(y',x)\right)\right\vert \\
			& \leq 1+ \frac{n}{3}+\delta n-1-2+ 2 (18\delta n -18) 
			\end{align*}
			where in the last inequality we applied \eqref{eq:Viup} and the first part of the claim. This implies that $\frac{(1-114\delta)n}{3}\le -38$, a contradiction when $\delta \le 1/114$.
		\end{proof}

	This establishes Part (i) of Theorem~\ref{theorem: K4 stability}. 
	By Claim~\ref{newclaim: missing x degrees small, bad x degree zero}, the total number of missing $xV_iV_{j}$ edges, $i\ne j$ is at most 
	\begin{align*}
\frac{1}{2}\sum_i (18\delta n - 18) \vert V_i\vert&= \frac12 (n-1)(18\delta n-18)< 9\delta n^2, 
	\end{align*}
	establishing Part (ii) of the theorem. Since a triple $v_1v_2v_3$ with $v_i \in V_i$ is an edge of $G$ only if one of the $xv_iv_{i+1}$ triples is missing, by Part (ii) and Equation \eqref{eq:Viup}, there can be at most $9\delta n^2 (n/3 + \delta n) < 4\delta n^3$ such triples in total, establishing Part (iii) of the Theorem. 
	
	Finally we need to bound the number of non-edges of $G$ intersecting exactly two of $V_1, V_2, V_3$. Fix $v_1\in V_1$ and $v_2\in V_2$. Given a set $S\subseteq V(G)$, let $d(v_1, v_2, S)= |\Gamma(v_1, v_2) \cap S|$ denote the number of neighbours of $v_1$ and $v_2$ in $S$ and $\bar{d}(v_1, v_2, S)= |S\setminus \Gamma(v_1, v_2)|$. By the codegree condition, we have
	\[
	d(v_1, v_2, V_1\cup V_2)  \ge \frac23 n - \delta n - 1 - d(v_1, v_2, V_3).
	\]
	Together with \eqref{eq:Viup}, this implies that
	\begin{align*}
	\bar{d}(v_1, v_2, V_1\cup V_2) &\le |V_1| + |V_2| - 2 -\left(\frac23 n - \delta n - 1- d(v_1, v_2, V_3)\right)  \\
	& \le 2 \left(\frac{n}3 + \delta n - 1\right) - 2 - \frac23 n + \delta n  +1 + d(v_1, v_2, V_3)  \\
	&= 3\delta n - 3 + d(v_1, v_2, V_3), 
	\end{align*}
The number of non-edges of $G$ in the form of $V_1 V_1 V_2$ or $V_1 V_2 V_2$ is thus
	\begin{align*}
	\frac12 \sum_{v_1\in V_1, v_2\in V_2} \bar{d}(v_1, v_2, V_1\cup V_2) & \le \frac12 \Bigl( |V_1| |V_2| ( 3\delta n -3) + e(V_1, V_2, V_3) \Bigr) 
	\end{align*}
	where $e(V_1, V_2, V_3)$ denotes the number of tripartite edges. We know that   
	$e(V_1 ,V_2, V_3)\le 9\delta n^2 (n/3 + \delta n) $. Thus the number of non-edges of $G$ intersecting exactly two of $V_1, V_2, V_3$ is at most	
	\begin{align*}
	\frac12  \sum_i \vert V_i\vert \vert V_{i+1}\vert  \left(3\delta n -3\right) + \frac32 e(V_1, V_2, V_3)
	 &\leq \frac{n-1}{2}\left(\frac{n}{3}+\delta n-1\right)\left(3\delta n-3\right) +\frac92 \delta n^3 (1 + 3\delta) \\
	& < \frac{n}2 \left(\frac{n}{3}+\delta n \right)3\delta n +\frac92 \delta n^3 (1 + 3\delta) < 6\delta n^3,
	\end{align*} 
	where we applied \eqref{eq:Viup} in the second inequality. This establishes Part (iv) of the Theorem.
	\end{proof}
\begin{proof}[Proof of Theorem~\ref{theorem: extremal configurations}]

\noindent \textbf{Case 1: $n=3m \geq 858$.} 
	Let $G$ be a $3$-graph on $n=3m$ vertices with $\delta_2(G)= 2m-2=\left(\frac{2}{3}-\frac{2}{n}\right)n$. Suppose $x\in V(G)$ is not covered by any copy of $K_4$. Since $\delta=\frac{2}{n}\leq \frac{1}{429}$, we can apply Theorem~\ref{theorem: K4 stability} to obtain a tripartition $V_1\sqcup V_2 \sqcup V_3 =V(G)\setminus \{x\}$ satisfying conditions (i)--(v) from Theorem~\ref{theorem: K4 stability}. Assume without loss of generality that $\vert V_1\vert \leq \vert V_2\vert \leq \vert V_3\vert$. For any vertex $v_3\in V_3$, we have (by condition (i))
	\begin{align*}
	2m-2\leq d(x,v_3)\leq \vert V_1\vert +\vert V_2\vert =3m -1-\vert V_3\vert,
	\end{align*}
	from which it follows that $\vert V_3 \vert \leq m+1$. 

	Suppose $\vert V_3\vert=m+1$. The condition (i) tells us that all triples of the form $xvv_3$ with $v\in V_1\sqcup V_2$ and $v_3 \in V_3$ must be in $G$, for otherwise $d(x,v_3)<2m-2=\delta_2(G)$. Now consider any pair of vertices $v_1\in V_1$, $v_2\in V_2$ for which $xv_1v_2$ is in $G$ (such pairs must exist by condition (ii), say). For every $v_3\in V_3$, both of $xv_1v_3$ and $xv_2v_3$ are in $G$, whence the tripartite triple $v_1v_2v_3$ must be absent from $G$ (since otherwise $xv_1v_2v_3$ would induce a copy of $K_4$ in $G$). Thus the codegree of $v_1, v_2$ is at most $\vert V_1\vert +\vert V_2 \vert -1=2m-3$, a contradiction.

	Thus $\vert V_3\vert=m$, whence $\vert V_2\vert=m$ also and $\vert V_1\vert=m-1$. Now, by condition (i), for every $v_3\in V_3$ at most one triple $vv_3x$ with $v\in V_1\sqcup V_2$ can be missing in $G$, as otherwise $d(v_3,x)<2m-2$; similarly for every $v_2\in V_2$ at most one triple $vv_2x$ with $v\in V_1\sqcup V_3$ is missing, and for every $v_1\in V_1$ at most $2$ triples $vv_1x$ with $v\in V_2\sqcup V_3$ are missing. Further a tripartite triple $v_1v_2v_3$ can be included in $G$ only if one of the triples $xv_1v_2$, $xv_2v_3$, $xv_1v_3$ is missing from $G$. This shows that $G$ must be (isomorphic to) a subgraph of $F_1(\mathcal{E}, 3m)$ for some admissible collection of pairs $\mathcal{E}$.

\medskip
\noindent \textbf{Case 2: $n=3m+1 \geq 715$.} Let $G$ be a $3$-graph on $n=3m+1$ vertices with minimum codegree $\delta_2(G)= 2m-1=\left(\frac{2}{3}-\frac{5}{3n}\right)n$. Suppose $x\in V(G)$ is not covered by any copy of $K_4$. Since $\delta=\frac{5}{3n}\leq \frac{1}{429}$, we can apply Theorem~\ref{theorem: K4 stability} to obtain a tripartition of $V(G)\setminus \{x\}=V_1\sqcup V_2 \sqcup V_3$ satisfying conditions (i)--(v) from Theorem~\ref{theorem: K4 stability}. Assume without loss of generality that $\vert V_1\vert \leq \vert V_2\vert \leq \vert V_3\vert$.

	For any vertex $v_3\in V_3$, we have (by condition (i))
	\begin{align*}
	2m-1\leq d(x,v_3)\leq \vert V_1\vert +\vert V_2\vert =3m-\vert V_3\vert,
	\end{align*}
	from which it follows that $\vert V_3 \vert \leq m+1$. If $\vert V_3\vert=m+1$, then by the codegree assumption and condition (i) all triples of the form $xvv_3$ with $v_3\in V_3$ and $v\in V_1\sqcup V_2$ are in $E(G)$. Now consider vertices $v_1\in V_1$ and $v_2\in V_2$ for which $xv_1v_2\in E(G)$ (which must exist by condition (ii), say). The tripartite triple $v_1v_2v_3$ does not lie in $E(G)$ for any $v_3\in V_3$, since otherwise $xv_1v_2v_3$ would induce a copy of $K_4$. Thus  $d(v_1,v_2)\leq \vert V_1 \vert +\vert V_2\vert -1=2m-2<2m-1$, a contradiction. We must thus have $\vert V_1\vert= \vert V_2 \vert = \vert V_3\vert=m$.

	Now by condition (i) and the codegree assumption, for every vertex $v_i\in V_i$ all but at most $1$ of the triples $xvv_i$ with $v\in V(G)\setminus \left(V_i\cup\{x\}\right)$ must be in $E(G)$. Furthermore a tripartite triple $v_1v_2v_3$ can belong to $G$ only if one of the triples $xv_1v_2$, $xv_2v_3$, $xv_3v_1$ is absent from $G$. This shows that $G$ is (isomorphic to) a subgraph of $F_1(\mathcal{E}, 3m+1)$ for some admissible collection of pairs $\mathcal{E}$.

\medskip
\noindent \textbf{Case 3: $n=3m+2 \geq 1001$.}

	Let $G$ be a $3$-graph on $n=3m+2$ vertices with minimum codegree $\delta_2(G)= 2m-1=\left(\frac{2}{3}-\frac{7}{3n}\right)n$. Suppose $x\in V(G)$ is not covered by any copy of $K_4$. Since $\delta=\frac{7}{3n}\leq \frac{1}{429}$, we find a tripartition of $V(G)\setminus \{x\}=V_1\sqcup V_2 \sqcup V_3$ satisfying conditions (i)--(v) from Theorem~\ref{theorem: K4 stability}. Assume without loss of generality that $\vert V_1\vert \leq \vert V_2\vert \leq \vert V_3\vert$.

	For any vertex $v_3\in V_3$, by condition (i), we have
	\begin{align*}
	2m-1\leq d(x,v_3)\leq \vert V_1\vert +\vert V_2\vert =3m+1-\vert V_3\vert,
	\end{align*}
	from which it follows that $\vert V_3 \vert \leq m+2$. If $\vert V_3\vert=m+2$, then by the codegree assumption and condition (i) all triples of the form $xvv_3$ with $v_3\in V_3$ and $v\in V_1\sqcup V_2$ are in $E(G)$. Now consider vertices $v_1\in V_1$ and $v_2\in V_2$ for which $xv_1v_2\in E(G)$ (which must exist by condition (ii), say). The tripartite triple $v_1v_2v_3$ does not lie in $E(G)$ for any $v_3\in V_3$, since otherwise $xv_1v_2v_3$ would induce a copy of $K_4$. Thus  $d(v_1,v_2)\leq \vert V_1 \vert +\vert V_2\vert -1=2m-2<2m-1$, a contradiction. We must thus have $\vert V_3\vert\leq m+1$. Our assumption that $\vert V_1\vert\leq \vert V_2\vert \leq \vert V_3$ then implies that $\vert V_3\vert \leq m+1$ and that $\vert V_2\vert \in \{m,m+1\}$. We have two subcases to consider.

	\noindent \textbf{Case 3a: $\vert V_1\vert =\vert V_2\vert =m$.} Condition (i) and the codegree assumption together imply that for every $v\in V_i$, all but at most $2$ of the triples of the form $xv\left(V\setminus\left(V_i\cup\{x\}\right)\right)$ must be in $E(G)$ if $i\in\{1,2\}$, and all but at most $1$ if $i=3$. Further a tripartite triple $v_1v_2v_3$ can be in $E(G)$ only if one of $xv_1v_2$, $xv_2v_3$, $xv_3v_1$ is absent from $E(G)$. This shows that $G$ must be (isomorphic to) a subgraph of $F_1(\mathcal{E}, 3m)$ for some admissible collection of pairs $\mathcal{E}$.

	\noindent \textbf{Case 3b: $\vert V_1\vert =m-1$, $\vert V_2\vert =m+1$.} Condition (i) and the codegree assumption together imply that for every $v\in V_i$, all but at most $1$ of the triples of the form $xv\left(V\setminus\left(V_i\cup\{x\}\right)\right)$ must be in $E(G)$ if $i\in\{2,3\}$, and all but at most $3$ if $i=1$. Further a tripartite triple $v_1v_2v_3$ is in $E(G)$ only if one of $xv_1v_2$, $xv_2v_3$, $xv_3v_1$ is absent from $E(G)$. This shows that $G$ must be (isomorphic to) a subgraph of $F'_1(\mathcal{E}, 3m)$ for some admissible collection of pairs $\mathcal{E}$.
\end{proof}

\end{document}